\makeatletter\setlength{\textwidth}{16.0cm}
\allowdisplaybreaks \numberwithin{equation}{section}
\newtheorem{theorem}{Theorem}[section]
\newtheorem{corollary}{Corollary}[section]
\newtheorem{lemma}{Lemma}[section]
\newtheorem{proposition}{Proposition}[section]
\newtheorem{definition}{Definition}[section]
\newcommand{\D}{\mathrm{d}}
\newcommand{\Real}{\mathbb{R}}
\newcommand{\Int}{\mathrm{Int}}
\newcommand{\oOmega}{\overline{\Omega}}
\newcommand{\cB}{\mathcal{B}}
\newcommand{\cD}{\mathcal{D}}
\newcommand{\cF}{\mathcal{F}}
\newcommand{\cG}{\mathcal{G}}
\newcommand{\cL}{\mathcal{L}}
\newcommand{\cQ}{\mathcal{Q}}
\newcommand{\cR}{\mathcal{R}}
\newcommand{\cV}{\mathcal{V}}
\newcommand{\cX}{\mathcal{X}}
\newcommand{\cM}{\mathcal{M}}
\newcommand{\cN}{\mathcal{N}}
\newcommand{\bF}{\mathbb{F}}
\newcommand{\bO}{\mathbb{O}}
\newcommand{\bP}{\mathbb{P}}
\newcommand{\bQ}{\mathbb{Q}}
\newcommand{\bU}{\mathbb{U}}
\newcommand{\bX}{\mathbb{X}}
\newcommand{\bY}{\mathbb{Y}}
\newcommand{\bZ}{\mathbb{Z}}
\newcommand{\tn}{\tilde{n}}
\newcommand{\R}{\mathcal{R}_0}
\newcommand{\bme}{\bm{e}}
\newcommand{\bmi}{\bm{i}}
\newcommand{\bmr}{\bm{r}}
\newcommand{\bmu}{\bm{u}}
\newcommand{\bmv}{\bm{v}}
\newcommand{\bmw}{\bm{w}}
\newcommand{\bmkp}{\bm{\kappa}}
\newcommand{\bmnu}{{\bm{\nu}}}
\newcommand{\Realparts}{{\rm Re}}
\begin{document}

\title{Asymptotic behavior of the basic reproduction ratio for periodic reaction-diffusion systems
\date{\empty}
\author{ Lei Zhang $^{a,b}$, Xiao-Qiang Zhao $^{b}$	\\
	{\small a Department of Mathematics, Harbin Institute of Technology at Weihai,}\\
	{\small Weihai, Shandong 264209, China.}\\
	 {\small b Department of Mathematics and Statistics, Memorial University of Newfoundland,}\\
	 {\small St. John's, NL A1C 5S7, Canada.}\\
}
}\maketitle
\begin{abstract}
	This  paper is devoted to the study of asymptotic behavior of the basic reproduction ratio for periodic reaction-diffusion systems in the case of small and large diffusion coefficients. We first establish the continuity of the basic reproduction ratio with respect to parameters by developing the theory of resolvent positive operators. Then we investigate the limiting profile of the principal eigenvalue of an associated periodic eigenvalue	problem for large diffusion coefficients. We then obtain the asymptotic behavior of the basic reproduction ratio as the diffusion coefficients go to zero and infinity, respectively. We also investigate the limiting behavior of positive periodic solution for periodic and cooperative reaction-diffusion systems with the Neumann boundary condition when the diffusion coefficients are large enough. Finally, we apply these results to a reaction-diffusion model of Zika virus transmission.
	\par
	\textbf{Keywords}: Asymptotic behavior, reaction and diffusion, 
	periodic systems, basic reproduction ratio, principal eigenvalue
	
	\textbf{AMS Subject Classification (2020)}: 35K57, 35P99, 92D25
\end{abstract}
\section{Introduction}
The basic reproduction ratio $\R$ is one of the most valuable threshold quantities in population 
dynamics (see, e.g., \cite{Cushing2016, van2017, zhao2017dynamical}
and references therein).  In epidemiology, $\R$ is the expected number of secondary cases produced, in a completely 
susceptible population, by a typical infective individual.
Mathematically, the sign of $\R - 1$ can be  determined by  the stability of the zero solution for a linear system derived from the linearization at the zero solution or a disease-free equilibrium. This observation has been confirmed for various autonomous, periodic, and almost periodic
evolution equation models with or without time-delay (see, e.g., \cite{van2002reproduction,bacaer2006epidemic,wang2008threshold,thieme2009spectral,wang2012basic,wang2013basic,zhao2017basic,liang2019basic,liang2019principal,qiang2020basic}). Since there is no explicit formula of $\R$
for general periodic systems,  it is important to explore its  properties
qualitatively. 

Reaction–diffusion systems are widely used to study the spatial dynamics in population biology (see, e.g., \cite{cantrell2004spatial,fife1979mathematical,zhao2017dynamical}).
Let $\Omega \subset \Real^N$ be a bounded domain with smooth boundary $\partial \Omega$ and $\R(D)$ be the basic reproduction ratio associated with  the following scalar reaction-diffusion equation:
\begin{equation}\label{equ:R0:scalar:sys}
\begin{cases}
\frac{\partial u}{\partial t}=D \Delta u  -\gamma(x) u + \beta(x) u, & x \in \Omega,~t>0,\\
\frac{\partial u}{\partial \bm{\nu}}=0, &x \in \partial \Omega, ~t>0.
\end{cases}
\end{equation}
Here $D$ is the diffusion coefficient,  $\beta(x)$ and $\gamma(x)$ are the disease transmission and recovery rates, respectively. 
Allen, et al.  \cite{allen2008asymptotic} showed that
$$
\lim\limits_{D \rightarrow 0} \R(D) = \max_{ x \in \oOmega} \frac{\beta (x)}{\gamma (x)}  \quad 
\text{ and } \quad 
\lim\limits_{D \rightarrow +\infty} \R(D) = \frac{\int_{\Omega} \beta (x) \D x}{\int_{\Omega} \gamma(x) \D x}.
$$
Recently, Magal, Webb and Wu \cite{magal2019basic}, and Chen and Shi \cite{chen2020asymptotic} generalized such results to autonomous reaction-diffusion systems.
There are also some related works for  the patch models and scalar nonlocal dispersal equation models (see, e.g., \cite{allen2007Asymptotic,yang2019dynamics,gao2019travel,gao2020fast}).

In view of  population models with seasonality, 
 a natural question is whether these results on the asymptotic behavior of $\R$ can be extended to time-periodic reaction-diffusion systems.
Peng and Zhao\cite{peng2012reaction,peng2015effects} gave a  confirmative answer for scalar equations.  The purpose of this paper is to study the 
limiting profile of $\R$ associated with  general periodic systems 
for small and large  diffusion coefficients. It turns out that as the diffusion coefficients go to zero, $\R$ tends to the maximum of $\R(x)$ associated with the periodic reaction ODE systems with parameter $x \in \oOmega$ in the case of Dirichlet, Neumann and Robin boundary conditions, and that as the diffusion coefficients go to infinity, $\R$ tends to zero in the case of the Dirichlet and Robin boundary conditions, while $\R$ tends to the basic reproduction ratio of the periodic ODE system derived from the spatial average of the reaction terms in the case of the Neumann boundary condition.

To archive our purpose, we  first develop the theory of  resolvent positive operators under the setting of Thieme \cite{thieme2009spectral}, 
and then prove  the continuity of $\R$  with respect to parameters.  This enables us to reduce the problem on the limiting profile of 
$\R$ into that of the principal eigenvalue associated with linear periodic cooperative systems.  For a scalar elliptic eigenvalue problem, the limiting profile of the principal eigenvalue can be derived easily from  the standard variational formula (see, e.g., \cite{cantrell2004spatial}).
Dancer \cite{dancer2009principal} and Lam and Lou \cite{lam2016asymptotic} generalized such a result to cooperative elliptic systems with small diffusion coefficients. For a scalar time-periodic parabolic equation, Hutson, Mischaikow and Pol{\'a}{\v{c}}ik \cite{hutson2001evolution} and Peng and Zhao \cite{peng2015effects} studied the asymptotic behavior of the principal eigenvalue as the diffusion and advection coefficients go to zero and infinity, respectively.  More recently, Bai and He \cite{bai2020asymptotic} extended the results in \cite{lam2016asymptotic} to  periodic parabolic systems for small diffusion coefficients.  
It remains to  explore the asymptotic behavior of the principal eigenvalue
of these periodic systems for large diffusion coefficients. 
When the Poincar\'e (period)  map,  which is a square matrix, of the 
spatially averaged ODE system  is  irreducible, we solve this problem 
for large diffusion coefficients by adapting  the arguments in \cite{hutson2001evolution,peng2015effects}.  When such a matrix 
is reducible, we prove the same result by establishing 
the relationship between the block of the Poincar\'e map (matrix) and 
that  of the coefficient matrix of the corresponding periodic ODE system. 

It is also interesting to interpret these limiting results in terms of  the principal eigenvalue. Let us define the principal eigenvalue for linear periodic ODE systems in the same way as that for periodic reaction-diffusion 
systems. It turns out that as the diffusion coefficients go to zero, the principal eigenvalue of periodic reaction-diffusion systems tends to the minimum of the principal eigenvalues of the periodic reaction ODE systems with  parameter $x$  in  the case of Dirichlet, Neumann and Robin boundary conditions, and that as the diffusion coefficients go to infinity, it tends to infinity in the case of  the Dirichlet and Robin boundary conditions, while it tends to the principal eigenvalue of the periodic ODE system derived from the spatial average of  reaction terms in the case of the Neumann boundary condition.

Note that the disease-free periodic solution varies with  the diffusion coefficients in many periodic epidemic models. Thus,  it is natural to investigate the limiting behavior of the positive periodic solution when the diffusion coefficients are small and large enough, respectively. 
For reaction-diffusion systems subject to the Neumann boundary condition, Conway, Hoff and Smoller \cite{conway1978large}, Hale \cite{hale1986large}, Hale and Rocha \cite{hale1987varying},
 and Cantrell, Cosner and Hutson \cite{cantrell1996ecological}  showed that the solutions are asymptotic to those of an ODE when the diffusion coefficients are large. Hale and Sakamoto\cite{hale1989shadow} and Hutson, Mischaikow and Pol{\'a}{\v{c}}ik \cite{hutson2001evolution} also found  that the dynamics of reaction-diffusion systems with the Neumann boundary condition approximates that of the associated shadow systems as the diffusion coefficients  partially tend to infinity. Lam and Lou \cite{lam2016asymptotic} proved that the positive steady state of a reaction-diffusion system converges uniformly to the equilibrium of the corresponding kinetic system as the diffusion coefficients go to zero. Recently, Bai and He \cite{bai2020asymptotic} extended such results in \cite{lam2016asymptotic} to periodic systems for small diffusion coefficients. 

In order to apply  our developed theory of the asymptotic behavior of $\R$ to periodic reaction-diffusion models,  we further study the limiting behavior of the positive periodic solution of periodic and cooperative reaction-diffusion systems subject to the Neumann boundary condition for large diffusion coefficients. For this purpose, we proceed with two steps. The first step is to show that the spatial average of the positive periodic solution converges to the positive periodic solution of the spatially averaged ODE system as the diffusion coefficients go to infinity.  The second step is to prove that the positive periodic solution approximates its spatial average when the diffusion coefficients are large enough.
It turns out that as the diffusion coefficients go to zero, the positive periodic solution approaches the positive periodic solution of the periodic reaction ODE systems with  parameter $x$, and that as the diffusion coefficients go to infinity, it tends to the positive periodic solution of the periodic ODE system derived from the spatial  average of reaction terms.

 The remaining part of this paper is organized as follows. 
In the next section, we present some basic properties of resolvent positive operators, and prove the continuity of the basic reproduction ratio with respect to parameters for abstract  periodic systems. In section \ref{sec:AB}, we study the asymptotic behavior of the principal eigenvalue for periodic cooperative reaction-diffusion systems with large diffusion coefficients. In section \ref{sec:R0}, we establish the results on  the limiting 
profile  of the basic reproduction ratio as the diffusion coefficients go to zero and infinity, respectively. In section \ref{sec:nonlinear}, we investigate the limiting behavior of the positive periodic solution of periodic and cooperative reaction-diffusion systems with the Neumann boundary condition when the diffusion coefficients are large enough. In section \ref{example},  
as an  illustrative example, we 
apply these  analytic  results to a reaction-diffusion model of 
Zika virus  transmission.

\section{Preliminaries}\label{sec:preliminary}

In this section, we present some  properties of resolvent positive operators
and introduce the basic reproduction ratio $\R$  for abstract periodic systems. Then we address the continuity of $\R$ with respect to parameters.

\begin{definition}
	A square matrix is said to be cooperative if its off-diagonal elements are nonnegative, and nonnegative if all elements are nonnegative. A cooperative square matrix is said to be irreducible if it is not similar, via a permutation, to a block lower triangular matrix, and reducible if otherwise.
\end{definition}

\begin{lemma}\label{lem:reducible}
	{\sc (\cite[Section 2.3]{berman1994nonnegative})}
	Assume that $A$ is a nonnegative and reducible $n \times n$ matrix. Then there exists a permutation matrix $P$ such that 
	$$
	PAP^T=
	\left(
	\begin{matrix}
	A_{11} & 0 & \cdots & 0\\
	A_{12} & A_{22}& \cdots & 0\\
	\vdots & \vdots&\ddots &\vdots\\
	A_{\tilde{n}1} & A_{\tn 2} & \cdots & A_{\tn \tn}
	\end{matrix}
	\right),
	$$
	and $r(A)= \max_{1 \leq k \leq \tn} r(A_{kk})$.
\end{lemma}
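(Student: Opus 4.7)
The plan is to construct $P$ by iteratively applying the definition of reducibility. Since $A$ is reducible, by the definition preceding the lemma there exists a permutation matrix $Q_1$ such that
$$
Q_1 A Q_1^T = \begin{pmatrix} B_{11} & 0 \\ B_{21} & B_{22} \end{pmatrix},
$$
with $B_{11}$ and $B_{22}$ square of positive size and themselves nonnegative. I would then examine each diagonal block: any block that is still reducible admits, by the same definition, a permutation that further block-triangularizes it, and this local permutation can be embedded as a block-diagonal permutation on $\Real^n$ that acts as the identity outside the chosen block. Conjugating by such a permutation reorders only the rows and columns inside the chosen block and therefore preserves the already-established block lower triangular structure of the surrounding entries.

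Because $n$ is finite and each splitting strictly decreases the size of the largest reducible diagonal block, the recursion terminates after finitely many steps, and the composition of all these local permutations yields a single permutation matrix $P$ for which $PAP^T$ has the claimed form, with each $A_{kk}$ either irreducible or a $1\times 1$ zero scalar. For the spectral radius identity, I would invoke that permutation conjugation preserves the spectrum and that the characteristic polynomial of a block lower triangular matrix factors as the product of the characteristic polynomials of its diagonal blocks, so that the spectrum of $A$ equals the union of the spectra of $A_{11},\dots,A_{\tn\tn}$, which immediately gives $r(A)=\max_{1\le k\le \tn} r(A_{kk})$.

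The main obstacle is purely a bookkeeping one: one must verify that the composition of the iteratively chosen permutations gives a single permutation on $\Real^n$ that simultaneously block-triangularizes all off-diagonal pieces. This follows because at each stage the conjugating permutation is supported on a single diagonal block and therefore cannot disturb the zero blocks above the diagonal produced at earlier stages. No genuinely difficult estimate is needed; the content of the lemma is combinatorial in nature, and the identity $r(A)=\max_k r(A_{kk})$ for block triangular matrices is classical. This is why the authors simply cite it from \cite{berman1994nonnegative} rather than reproving it in the body of the paper.
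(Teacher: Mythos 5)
Your proposal is correct. The paper itself gives no proof of this lemma: it is quoted directly from Berman and Plemmons \cite[Section 2.3]{berman1994nonnegative}, and your argument is essentially the standard construction of the Frobenius normal form that underlies that citation, so there is no divergence of approach to compare. Two small remarks: the lemma as stated only asserts the block lower triangular form and $r(A)=\max_k r(A_{kk})$, so for that the single application of the definition of reducibility plus the factorization of the characteristic polynomial of a block triangular matrix would already suffice; your iterative refinement additionally yields irreducible diagonal blocks (up to the usual convention about $1\times 1$ zero blocks), which is exactly the stronger form the authors later use in the proof of Theorem \ref{thm:infty:Neumann}, so carrying out the full recursion is the right call. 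Your termination bookkeeping is fine, though an induction on the matrix size $n$ would state it more cleanly than tracking the largest reducible block.
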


\begin{definition}\label{def:resolvent}
	The spectral bound of a closed linear operator $A$ is defined as 
	$$
	s(A)=\sup \{\Realparts \lambda: \lambda \in \sigma(A)\}.
	$$
	A closed linear operator $A$ is said to be resolvent positive if the resolvent set of $A$ contains a ray $(\omega,+\infty)$ such that $(\lambda I -A)^{-1}$ is positive for all $\lambda > \omega$.
\end{definition}

\begin{lemma}\label{lem:R0:equiv:restatement}
	 {\sc (\cite[Theorem 3.5]{thieme2009spectral})}
	Assume that $(E,E_+)$ is an ordered Banach space with the positive cone $E_+$ being normal and reproducing. Assume that $B$ is a resolvent positive operator on $E$ with $s(B)<0$, $C$ is a positive operator on $E$ and $B+C$ is still a resolvent positive operator on $E$. Then $s(B+ C)$ has the same sign as $ r(-B^{-1} C)-1$.
\end{lemma}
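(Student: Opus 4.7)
The plan is to translate the spectral question for $B+C$ into a question about the spectral radius of a positive operator via the factorization
\[
\lambda I - (B+C) = (\lambda I - B)\bigl(I - (\lambda I - B)^{-1} C\bigr),
\]
valid for every $\lambda$ in the resolvent set of $B$. With this identity in hand, the sign of $s(B+C)$ is governed by where the real function $g(\lambda):=r\bigl((\lambda I - B)^{-1} C\bigr)$ crosses the value $1$.

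First I would verify that $-B^{-1}$ exists as a bounded positive operator. Since $s(B)<0$, the point $0$ lies in the resolvent set of $B$, and the resolvent identity together with analytic continuation from the ray $(\omega,+\infty)$ on which positivity is postulated yields that $(\lambda I - B)^{-1}$ is positive for every real $\lambda>s(B)$, with $\lambda \mapsto (\lambda I - B)^{-1}$ non-increasing in the operator order. In particular $-B^{-1}\geq 0$, so $K(\lambda):=(\lambda I - B)^{-1} C$ is a non-increasing family of bounded positive operators with $K(0)=-B^{-1}C$. Because $E_+$ is normal, the spectral radius is monotone on positive operators, hence $g(\lambda)=r(K(\lambda))$ is non-increasing on $(s(B),\infty)$, tends to $0$ as $\lambda\to\infty$, and satisfies $g(0)=r(-B^{-1}C)$.

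The core spectral dictionary, read off from the factorization, is the following: for real $\lambda>s(B)$, the point $\lambda$ lies in $\sigma(B+C)$ if and only if $1\in\sigma(K(\lambda))$. The direction ``$g(\lambda)<1 \Rightarrow \lambda\in\rho(B+C)$'' follows from inverting $I-K(\lambda)$ by a Neumann series with positive sum; the converse relies on the Krein--Rutman-type fact that the spectral radius of a bounded positive operator on an ordered Banach space with normal and reproducing cone always belongs to its spectrum, so $g(\lambda)=1$ forces $1\in\sigma(K(\lambda))$ and hence $\lambda\in\sigma(B+C)$.

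With these tools the three regimes resolve directly. If $g(0)<1$, monotonicity gives $g(\lambda)\le g(0)<1$ for every real $\lambda\ge 0$, so no such $\lambda$ sits in $\sigma(B+C)$; invoking the additional fact that a resolvent positive operator with normal and reproducing cone has $s(B+C)\in\sigma(B+C)$ whenever finite rules out complex spectrum on the closed right half-plane and yields $s(B+C)<0$. If $g(0)>1$, continuity of $g$ combined with $g(\lambda)\to 0$ produces $\lambda^*>0$ with $g(\lambda^*)=1$; the dictionary then forces $\lambda^*\in\sigma(B+C)$, so $s(B+C)\ge\lambda^*>0$. The borderline $g(0)=1$ places $0$ in $\sigma(B+C)$ directly, and monotonicity on either side pins $s(B+C)=0$. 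The main obstacle is the pair of Krein--Rutman-type inputs the scheme leans on, namely that the spectral radius of a positive operator belongs to its spectrum and that the spectral bound of a resolvent positive operator is attained in the spectrum; these are precisely the points at which the normality and reproducing assumptions on $E_+$ are indispensable, and they furnish the non-trivial functional-analytic content beneath the otherwise algebraic factorization argument.
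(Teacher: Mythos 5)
The paper gives no proof of this lemma: it is imported verbatim from \cite[Theorem 3.5]{thieme2009spectral}, so the benchmark is Thieme's proof, whose architecture your sketch reproduces in outline --- the factorization $\lambda I-(B+C)=(\lambda I-B)\bigl(I-(\lambda I-B)^{-1}C\bigr)$, the monotone family $K(\lambda):=(\lambda I-B)^{-1}C$, and the two Krein--Rutman-type inputs (the spectral radius of a positive operator lies in its spectrum; the spectral bound of a resolvent positive operator lies in its spectrum) that require the cone to be normal and reproducing. That skeleton is the right one, and your treatment of the case $r(-B^{-1}C)<1$ is essentially sound modulo those quoted inputs.

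However, the proposal has genuine gaps at exactly the delicate points. (1) You assert that $g(\lambda)=r(K(\lambda))$ is continuous and invoke the intermediate value theorem to produce $\lambda^*>0$ with $g(\lambda^*)=1$ when $g(0)>1$; but the spectral radius is only upper semicontinuous in the operator norm, and combined with monotonicity this yields at best one-sided continuity, so a downward jump of $g$ across the value $1$ is not excluded --- the crossing argument carrying the whole case $r(-B^{-1}C)>1$ is therefore unjustified. (2) In the borderline case $g(0)=1$, monotonicity only gives $g(\lambda)\le 1$ for $\lambda>0$; if $g(\lambda)=1$ for some $\lambda>0$, then by the very Krein--Rutman fact you use, $\lambda\in\sigma(B+C)$ and $s(B+C)\ge\lambda>0$, so ``monotonicity pins $s(B+C)=0$'' does not follow without a strictness argument. (3) The claim $g(\lambda)\to 0$ as $\lambda\to\infty$ is not free: a general resolvent positive $B$ carries no norm-decay estimate for $(\lambda I-B)^{-1}$. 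The standard way to get $g(\lambda)\le 1$ for $\lambda>\max\{s(B),s(B+C)\}$ (hence $<1$ on the ray where $\lambda\in\rho(B+C)$) is to dominate the partial sums of the Neumann series $\sum_{n}(\lambda I-B)^{-1}\bigl(C(\lambda I-B)^{-1}\bigr)^{n}$ by the positive operator $(\lambda I-(B+C))^{-1}$ and use normality of the cone; this is also where the hypothesis that $B+C$ is resolvent positive does real work, whereas in your sketch that hypothesis enters only once, through $s(B+C)\in\sigma(B+C)$. These are precisely the difficulties Thieme's argument is built to avoid (via monotonicity and comparison with the resolvent of $B+C$ rather than continuity of the spectral radius), so as written the proposal is not a complete proof.
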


\begin{lemma}\label{lem:R0:equiv}
	Assume that $(E,E_+)$ is an ordered Banach space with the positive cone $E_+$ being normal and reproducing. Assume that $B$ is a resolvent positive operator on $E$ with $s(B)<0$, $C$ is a positive operator on $E$, and $B+\frac{1}{\mu}C$ is still a resolvent positive operator on $E$ for any $\mu >0$. Then the following statements are valid:
	\begin{enumerate}
		\item[\rm (i)] For any $\mu >0$, $s(B+ \frac{1}{\mu} C)$ has the same sign as $ r(-B^{-1} C)-\mu$. 
		\item[\rm (ii)] If $ r(-B^{-1} C)>0$, $\mu=r(-B^{-1} C)$ is the unique solution of $s(B+ \frac{1}{\mu} C)=0$.
		\item[\rm (iii)] If $s(B + \frac{1}{\mu_0} C)\geq 0$
		for some $\mu_0>0$, then $ r(-B^{-1} C)>0$. Thus, if $ r(-B^{-1} C)=0$, then $s(B + \frac{1}{\mu} C)<0$ for all $\mu >0$.
		\item[\rm (iv)] If $s(B + \frac{1}{\mu} C)<0$ for all $\mu >0$, then $ r(-B^{-1} C)=0$.
	\end{enumerate}
\end{lemma}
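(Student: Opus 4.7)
The plan is to derive all four statements from Lemma~\ref{lem:R0:equiv:restatement} by applying it with $C$ replaced by $\frac{1}{\mu}C$, and then exploiting the positive homogeneity of the spectral radius, namely $r(-B^{-1}(\tfrac{1}{\mu}C))=\tfrac{1}{\mu}\,r(-B^{-1}C)$ for $\mu>0$. The hypotheses of Lemma~\ref{lem:R0:equiv:restatement} are immediately available: $B$ is resolvent positive with $s(B)<0$, the operator $\frac{1}{\mu}C$ is positive since $C$ is and $\mu>0$, and $B+\frac{1}{\mu}C$ is resolvent positive by assumption.

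For part (i), I would apply Lemma~\ref{lem:R0:equiv:restatement} to the pair $(B,\frac{1}{\mu}C)$ to conclude that $s(B+\frac{1}{\mu}C)$ has the same sign as $r(-B^{-1}(\frac{1}{\mu}C))-1=\frac{1}{\mu}r(-B^{-1}C)-1$. Multiplying the latter by the positive number $\mu$ preserves its sign, so $s(B+\frac{1}{\mu}C)$ has the same sign as $r(-B^{-1}C)-\mu$. Part (ii) is then a direct consequence: if $r(-B^{-1}C)>0$, the equation $r(-B^{-1}C)-\mu=0$ has the unique solution $\mu=r(-B^{-1}C)>0$, and by (i) this is the unique positive $\mu$ for which $s(B+\frac{1}{\mu}C)=0$.

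Part (iii) I would establish by contraposition. If $s(B+\frac{1}{\mu_0}C)\geq 0$ for some $\mu_0>0$, then by (i) we have $r(-B^{-1}C)-\mu_0\geq 0$, giving $r(-B^{-1}C)\geq \mu_0>0$. The stated consequence follows at once: if $r(-B^{-1}C)=0$ then no such $\mu_0$ can exist, so $s(B+\frac{1}{\mu}C)<0$ for every $\mu>0$. For part (iv), if $s(B+\frac{1}{\mu}C)<0$ for all $\mu>0$, then (i) yields $r(-B^{-1}C)<\mu$ for every $\mu>0$; combining this with the fact that $r(-B^{-1}C)\geq 0$ for any spectral radius, letting $\mu\downarrow 0$ forces $r(-B^{-1}C)=0$.

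There is no substantial obstacle here; the lemma is essentially a scaling reformulation of Lemma~\ref{lem:R0:equiv:restatement}, and the only point requiring slight care is to keep track of the sign-preserving multiplication by $\mu>0$ and to remember that the spectral radius is always nonnegative, which is what makes the one-sided bound in (iv) collapse to an equality.
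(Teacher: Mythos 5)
Your proof is correct and follows essentially the same route as the paper: apply Lemma~\ref{lem:R0:equiv:restatement} to the pair $(B,\tfrac{1}{\mu}C)$, use positive homogeneity of the spectral radius to get the sign of $\tfrac{1}{\mu}r(-B^{-1}C)-1$, and then read off (ii)--(iv) from (i). The only cosmetic difference is that the paper deduces (iv) from (ii) (a zero of $s(B+\tfrac{1}{\mu}C)$ would exist if $r(-B^{-1}C)>0$), whereas you deduce it from (i) by letting $\mu\downarrow 0$ and using $r(-B^{-1}C)\geq 0$; both are equally valid.
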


\begin{proof}
	For any given $\mu>0$, Lemma \ref{lem:R0:equiv:restatement} implies that  $s(B+ \frac{1}{\mu} C)$ has the same sign as $ \frac{1}{\mu} r(-B^{-1} C)-1$. Thus, statement  (i) holds.  Statement  (ii) is a straightforward consequence of (i).  Statement  (i) also implies that $r(-B^{-1} C)\geq \mu_0> 0$, that is, statement  (iii) holds. Statement  (iv) can be derived directly from (ii).
\end{proof}

\begin{theorem}\label{thm:R0:conti}
	Let $(\Theta,d)$ be a metric space.
	Assume that $(E,E_+)$ is an ordered Banach space with the positive cone $E_+$ being normal and reproducing. For any $\theta \in \Theta$, let $B_{\theta}$ be a resolvent positive operator on $E$ with $s(B_{\theta})<0$, and let $C_{\theta}$ be a positive operator on $E$. Then $\lim\limits_{\theta \rightarrow \theta_0} r(-(B_{\theta})^{-1}C_{\theta})= r(-(B_{\theta_0})^{-1}C_{\theta_0})$
	for some $\theta_0 \in \Theta$ provided that 
	\begin{enumerate}
		\item[\rm (i)] For any $\mu >0$ and $\theta \in \Theta$, $B_{\theta} +\frac{1}{\mu} C_{\theta}$ is still a resolvent positive operator on $E$.
		\item[\rm (ii)] For any $ \mu >0$, $\lim\limits_{\theta \rightarrow \theta_0}s(B_{\theta}+ \frac{1}{\mu} C_{\theta})=s(B_{\theta_0}+ \frac{1}{\mu} C_{\theta_0})$.
	\end{enumerate}
\end{theorem}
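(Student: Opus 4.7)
The plan is to exploit Lemma \ref{lem:R0:equiv} to convert the statement about $r(-(B_{\theta})^{-1} C_{\theta})$ into a statement about the sign of the spectral bound $s(B_{\theta} + \tfrac{1}{\mu} C_{\theta})$, where the hypothesis (ii) directly gives continuity. Write $\mu_{0} := r(-(B_{\theta_{0}})^{-1} C_{\theta_{0}})$ and $\mu_{\theta} := r(-(B_{\theta})^{-1} C_{\theta})$. The goal is to show $\mu_{\theta} \to \mu_{0}$ as $\theta \to \theta_{0}$. Assumption (i) ensures Lemma \ref{lem:R0:equiv} applies uniformly in $\theta$.

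I would split the argument into two cases. First, suppose $\mu_{0} > 0$. Fix $\varepsilon \in (0,\mu_{0})$. By Lemma \ref{lem:R0:equiv}(i) applied at $\theta_{0}$, the numbers $s(B_{\theta_{0}} + \tfrac{1}{\mu_{0}-\varepsilon} C_{\theta_{0}})$ and $s(B_{\theta_{0}} + \tfrac{1}{\mu_{0}+\varepsilon} C_{\theta_{0}})$ are strictly positive and strictly negative, respectively (as they share signs with $\mu_{0} - (\mu_{0}-\varepsilon) = \varepsilon$ and $\mu_{0} - (\mu_{0}+\varepsilon) = -\varepsilon$). By hypothesis (ii), these spectral bounds depend continuously on $\theta$ for each fixed $\mu$, so there exists $\delta>0$ such that both inequalities persist whenever $d(\theta,\theta_{0})<\delta$. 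Lemma \ref{lem:R0:equiv}(iii) then forces $\mu_{\theta} > 0$, and another application of Lemma \ref{lem:R0:equiv}(i) gives $\mu_{\theta} - (\mu_{0}-\varepsilon) > 0$ and $\mu_{\theta} - (\mu_{0}+\varepsilon) < 0$, that is, $|\mu_{\theta} - \mu_{0}| < \varepsilon$.

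Second, suppose $\mu_{0} = 0$. Given $\varepsilon > 0$, Lemma \ref{lem:R0:equiv}(iv) (or equivalently the contrapositive of (iii)) yields $s(B_{\theta_{0}} + \tfrac{1}{\varepsilon} C_{\theta_{0}}) < 0$. Continuity assumption (ii) with $\mu = \varepsilon$ furnishes $\delta > 0$ such that $s(B_{\theta} + \tfrac{1}{\varepsilon} C_{\theta}) < 0$ for $d(\theta,\theta_{0}) < \delta$. By Lemma \ref{lem:R0:equiv}(i), this means either $\mu_{\theta} = 0$ or $\mu_{\theta} - \varepsilon < 0$; in both cases $0 \le \mu_{\theta} < \varepsilon$, which gives $|\mu_{\theta} - \mu_{0}| < \varepsilon$.

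The main technical subtlety, which is essentially the only thing that needs care, is the case $\mu_{0} = 0$: the characterization in Lemma \ref{lem:R0:equiv}(ii) only applies when $r(-B^{-1}C) > 0$, so one cannot simply plug $\mu = \mu_{0}$ into the spectral bound equation. The remedy, already built into the plan, is to invoke parts (iii)--(iv) of Lemma \ref{lem:R0:equiv} to sandwich $\mu_{\theta}$ between $0$ and an arbitrarily small $\varepsilon$. Beyond this, the proof is a clean two-sided squeeze driven entirely by the sign-continuity supplied by hypothesis (ii).
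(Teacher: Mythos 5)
Your proof is correct and follows essentially the same route as the paper: translate the problem into sign statements about $s(B_{\theta}+\tfrac{1}{\mu}C_{\theta})$ via Lemma \ref{lem:R0:equiv}, use hypothesis (ii) to propagate those signs to nearby $\theta$, and squeeze, treating $\mu_0>0$ and $\mu_0=0$ separately exactly as the paper does. The only quibble is a citation slip in Case 2: the implication you need ($r(-B_{\theta_0}^{-1}C_{\theta_0})=0 \Rightarrow s(B_{\theta_0}+\tfrac{1}{\epsilon}C_{\theta_0})<0$) is the second assertion of Lemma \ref{lem:R0:equiv}(iii), not part (iv), which states the converse; since you also invoke the contrapositive of (iii), the argument stands.
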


\begin{proof}
	For convenience, we define $\R^{\theta}:=r(-(B_{\theta})^{-1}C_{\theta})$, $\forall \theta \in \Theta$.
	We divide the proof into two cases:
	
	{\it Case 1.} $\R^{\theta_0}>0$.
	For any given $\epsilon \in (0,\R^{\theta_0})$, 
	by Lemma \ref{lem:R0:equiv}(i), we obtain
	$$
	s(B_{\theta_0} + \frac{1}{\R^{\theta_0}-\epsilon} C_{\theta_0})>0,
	\text{ and }
	s(B_{\theta_0} + \frac{1}{\R^{\theta_0} +\epsilon} C_{\theta_0})<0.
	$$
	Thanks to assumption (ii), there exists $\delta>0$ such that if $d(\theta,\theta_0) \leq \delta$, then $s(B_{\theta} + \frac{1}{\R^{\theta_0}-\epsilon}C_{\theta})>0$ and $s(B_{\theta} + \frac{1}{\R^{\theta_0}+\epsilon} C_{\theta})<0$. By 
	Lemma \ref{lem:R0:equiv} (i) again, it follows that
	$$
	0<\R^{\theta_0}- \epsilon < \R^{\theta} < \R^{\theta_0}+ \epsilon$$
	provided $d(\theta,\theta_0) \leq \delta$. This shows that $\lim\limits_{\theta \rightarrow \theta_0} \R^{\theta} = \R^{\theta_0}$.
	
	{\it Case 2.} $\R^{\theta_0}=0$.
	Let $$\Lambda:=\{\theta \in \Theta: \R^{\theta}>0\}.$$ Lemma \ref{lem:R0:equiv}(ii) implies that $s(B_{\theta} + \frac{1}{\R^{\theta}}C_{\theta})=0$, $\forall \theta \in \Lambda$. For any given $\epsilon>0$, using Lemma \ref{lem:R0:equiv}(iii) and $ \R^{\theta_0}=0$, we have 
	$
	s(B_{\theta_0} + \frac{1}{\epsilon} C_{\theta_0})<0
	$. By assumption (ii), there exists $\delta>0$ such that 
	$s(B_{\theta} + \frac{1}{\epsilon} C_{\theta})<0$ for all $\theta \in \Lambda$ with $d(\theta,\theta_0) \leq \delta$. Thanks to Lemma \ref{lem:R0:equiv}(i), we conclude that $\R^{\theta}< \epsilon$ provided $\theta \in \Lambda$  and $d(\theta,\theta_0) \leq \delta$. Thus, $\lim\limits_{\theta \rightarrow \theta_0} \R^{\theta} = 0$.
\end{proof}

\begin{theorem}\label{thm:R0:conti:0}
	Let $(\Theta,d)$ be a metric space and let $\theta_0 \in \Theta$  be given.
	Assume that $(E,E_+)$ is an ordered Banach space with the positive cone $E_+$ being normal and reproducing. For any $\theta \in \Theta \setminus \{ \theta_0 \}$, let $B_{\theta}$ be a resolvent positive operator on $E$ with $s(B_{\theta})<0$, and let $C_{\theta}$ be a positive operator on $E$. Then $\lim\limits_{\theta \rightarrow \theta_0} r(-(B_{\theta})^{-1}C_{\theta})= 0$ for some $\theta_0 \in \Theta$ provided that 
	\begin{enumerate}
		\item[\rm (i)] For any $\mu >0$ and 
		$\theta \in \Theta\setminus \{ \theta_0\}$, $B_{\theta} +\frac{1}{\mu} C_{\theta}$ is still a resolvent positive operator on $E$.
		\item[\rm (ii)] For any $ \mu >0$, $\limsup\limits_{\theta \rightarrow \theta_0} s(B_{\theta}+ \frac{1}{\mu} C_{\theta}) <0$.
	\end{enumerate}
\end{theorem}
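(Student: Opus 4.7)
The plan is to imitate Case 2 of the proof of Theorem 2.1, but with a twist: here $B_{\theta_0}$ and $C_{\theta_0}$ are not assumed to exist, so the previous argument's appeal to Lemma \ref{lem:R0:equiv}(iii) at the limit point $\theta_0$ is unavailable. The stronger $\limsup$ hypothesis in (ii) is exactly what compensates for this missing input, since it already supplies the strict negativity of $s(B_\theta+\tfrac{1}{\mu}C_\theta)$ near $\theta_0$ directly rather than via continuity from a value at $\theta_0$.

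Concretely, I would set $\R^\theta := r(-(B_\theta)^{-1}C_\theta)$ for $\theta \in \Theta \setminus \{\theta_0\}$; note that $\R^\theta \ge 0$ automatically, since $s(B_\theta)<0$ guarantees $0$ is in the resolvent set of $B_\theta$ with $-B_\theta^{-1}$ positive, whence $-B_\theta^{-1}C_\theta$ is a positive operator and has nonnegative spectral radius. Fix an arbitrary $\varepsilon>0$ and apply hypothesis (ii) with $\mu = \varepsilon$: we obtain
\[
\limsup_{\theta \to \theta_0} s\!\left(B_\theta + \tfrac{1}{\varepsilon}C_\theta\right) < 0,
\]
so there exists $\delta>0$ such that $s(B_\theta+\tfrac{1}{\varepsilon}C_\theta)<0$ whenever $\theta \in \Theta\setminus\{\theta_0\}$ and $d(\theta,\theta_0)\le\delta$. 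Hypothesis (i) ensures that Lemma \ref{lem:R0:equiv} is applicable to the pair $(B_\theta, C_\theta)$ for each such $\theta$; its part (i) then tells us that the sign of $s(B_\theta + \tfrac{1}{\varepsilon}C_\theta)$ equals the sign of $\R^\theta - \varepsilon$, forcing $\R^\theta < \varepsilon$. Combining with $\R^\theta \ge 0$ gives $0 \le \R^\theta < \varepsilon$ for $d(\theta,\theta_0)\le\delta$, and since $\varepsilon$ was arbitrary this is exactly $\lim_{\theta\to\theta_0}\R^\theta = 0$.

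There is no real obstacle here; the only subtle point is bookkeeping about the domain of $\theta$. Because the conclusion is a limit as $\theta\to\theta_0$ with the restriction $\theta\ne\theta_0$ built into the hypotheses, one must be careful that every invocation of Lemma \ref{lem:R0:equiv} is at a parameter $\theta\ne\theta_0$ (where the resolvent-positive and sign-of-spectral-bound hypotheses hold), which is exactly where assumption (i) is used. No dichotomy on whether $\R^{\theta_0}$ is zero or positive is needed, since the statement asserts the limit is $0$ unconditionally.
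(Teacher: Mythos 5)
Your proposal is correct and follows essentially the same route as the paper: use hypothesis (ii) at $\mu=\varepsilon$ to get strict negativity of $s(B_\theta+\tfrac{1}{\varepsilon}C_\theta)$ near $\theta_0$, then invoke Lemma \ref{lem:R0:equiv}(i) (valid by hypothesis (i)) to conclude $\R^{\theta}<\varepsilon$. The only cosmetic difference is that the paper first restricts to the set $\Lambda$ of parameters with $\R^{\theta}>0$ before applying the lemma, whereas you apply the sign equivalence to all nearby $\theta\neq\theta_0$ and use $\R^{\theta}\geq 0$; both yield the same conclusion.
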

\begin{proof}
	We prove this theorem by modifying the arguments for the case where $\R^{\theta_0}
	=0$ in the proof of Theorem \ref{thm:R0:conti}. For convenience, we define $\R^{\theta}:=r(-(B_{\theta})^{-1}C_{\theta})$, $\forall \theta \in \Theta$.
	Let $$\Lambda:=\{\theta \in \Theta: \R^{\theta}>0\}.$$ 
	Then  Lemma \ref{lem:R0:equiv}(ii) implies that $s(B_{\theta} + \frac{1}{\R^{\theta}}C_{\theta})=0$, $\forall \theta \in \Lambda$. For any given $\epsilon>0$, by assumption (ii), there exists $\delta >0$ such that 
	$s(B_{\theta} + \frac{1}{\epsilon} C_{\theta})<0$ 
	for all $\theta \in \Lambda$ with $d(\theta,\theta_0) \leq \delta$. Thanks to Lemma \ref{lem:R0:equiv}(i), we conclude that $\R^{\theta}< \epsilon$ provided $\theta \in \Lambda$ and $d(\theta,\theta_0) \leq \delta$. Thus, $\lim\limits_{\theta \rightarrow \theta_0} \R^{\theta} = 0$. 
\end{proof}


Let $T>0$ be a given real number. Next we introduce the concept of 
periodic evolution families in order to study periodic evolution systems.

\begin{definition}
	A family of bounded linear operators $\varUpsilon(t,s)$ on a Banach space $E$, $t,s \in \Real$ with $t\geq s$, is called  a $T$-periodic evolution family provided that 
	$$
	\varUpsilon(s,s)=I,\quad
	\varUpsilon(t,r)\varUpsilon(r,s)=\varUpsilon(t,s),\quad 
	\varUpsilon(t+T,s+T)=\varUpsilon(t,s),
	$$
	for all $t,s,r \in \Real$ with $t\geq r \geq s$, and for each $e\in E$, $\varUpsilon(t,s)e$ is a continuous function of $(t,s)$, $t \geq s$. The exponential growth bound of evolution family $\{\varUpsilon(t,s): t\geq s\}$ is defined as 
	\begin{equation}\label{equ:E_G_B}
	\omega(\varUpsilon) = \inf \{\tilde{\omega} \in \Real : \exists M \geq 1: \forall t,s \in \Real,~t \geq s: \Vert \varUpsilon(t,s)\Vert \leq M e^{\tilde{\omega }(t-s)} \}.
	\end{equation}
\end{definition}

\begin{lemma}{ \sc (\cite[Propostion A.2]{thieme2009spectral})}
	\label{lem:w_theta:equ}
	Let $E$ be a Banach space and let $\{ \varUpsilon(t,s): t\geq s \}$ be a $T$-periodic evolution family on a Banach space $E$. Then 
	$\omega(\varUpsilon)=\frac{\ln r(\varUpsilon(T,0))}{T}=\frac{\ln r(\varUpsilon(T+\tau,\tau))}{T},~ \forall \tau\in [0,T]$.
\end{lemma}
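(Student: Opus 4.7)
The plan is to prove the two equalities separately, avoiding any appeal to invertibility of $\varUpsilon(t,s)$ (which typically fails for parabolic evolution families). First I would establish the $\tau$-independence of the Poincar\'e map's spectral radius. Using the composition rule and the periodicity identity $\varUpsilon(t+T,s+T)=\varUpsilon(t,s)$, I would write, for $\tau\in[0,T]$,
\begin{equation*}
\varUpsilon(T+\tau,\tau)=\varUpsilon(T+\tau,T)\,\varUpsilon(T,\tau)=\varUpsilon(\tau,0)\,\varUpsilon(T,\tau),
\end{equation*}
while $\varUpsilon(T,0)=\varUpsilon(T,\tau)\,\varUpsilon(\tau,0)$. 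The classical fact that $r(AB)=r(BA)$ for bounded operators $A,B$ then immediately gives $r(\varUpsilon(T+\tau,\tau))=r(\varUpsilon(T,0))$. This is the easy half and takes care of the second equality in the lemma.

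Next I would prove $\omega(\varUpsilon)=T^{-1}\ln r(\varUpsilon(T,0))$. Write $P_s:=\varUpsilon(s+T,s)$. For any $t\geq s$, divide $t-s=kT+\rho$ with $k\in\mathbb{N}$ and $\rho\in[0,T)$. Repeated application of the periodicity identity yields $\varUpsilon(s+kT,s)=P_s^{k}$, so that
\begin{equation*}
\varUpsilon(t,s)=\varUpsilon(s+kT+\rho,\,s+kT)\,P_s^{k}=\varUpsilon(s+\rho,s)\,P_s^{k}.
\end{equation*}
By the strong continuity hypothesis and the uniform boundedness principle, $M:=\sup\{\|\varUpsilon(s+\rho,s)\|:s,\rho\in[0,T]\}<\infty$, while Gelfand's formula gives $\|P_s^{k}\|^{1/k}\to r(P_s)=r(\varUpsilon(T,0))$ (using the first step, since $s$ can be reduced mod $T$). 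For any $\epsilon>0$ this yields $\|\varUpsilon(t,s)\|\leq M'\exp\bigl((t-s)T^{-1}(\ln r(\varUpsilon(T,0))+\epsilon)\bigr)$ for all $t\geq s$, hence $\omega(\varUpsilon)\leq T^{-1}\ln r(\varUpsilon(T,0))$.

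For the reverse inequality I would fix any $\tilde\omega>\omega(\varUpsilon)$ and any $s$, specialise to $t=s+kT$, and read off $\|P_s^{k}\|\leq M e^{\tilde\omega kT}$; taking $k$-th roots and $k\to\infty$ gives $r(\varUpsilon(T,0))=r(P_s)\leq e^{\tilde\omega T}$, so $T^{-1}\ln r(\varUpsilon(T,0))\leq\omega(\varUpsilon)$ after passing to the infimum over $\tilde\omega$.

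The only genuinely delicate point is justifying the uniform bound $M<\infty$; nothing else is more than a bookkeeping exercise with the two defining identities of a periodic evolution family. I would dispatch this by invoking strong continuity of $(t,s)\mapsto\varUpsilon(t,s)e$ on the compact set $\{(s+\rho,s):s,\rho\in[0,T]\}$, obtaining pointwise boundedness of the operator family on this set, and then applying the Banach--Steinhaus theorem. Everything else reduces to the identity $\varUpsilon(t,s)=\varUpsilon(s+\rho,s)P_s^{k}$ combined with $r(AB)=r(BA)$ and Gelfand's formula.
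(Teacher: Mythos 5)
The paper itself gives no proof of this lemma: it is quoted directly from Thieme's Proposition A.2, so there is no in-paper argument for you to diverge from. Your proposal is the standard self-contained proof and is essentially correct. The $\tau$-independence via $\varUpsilon(T+\tau,\tau)=\varUpsilon(\tau,0)\varUpsilon(T,\tau)$, $\varUpsilon(T,0)=\varUpsilon(T,\tau)\varUpsilon(\tau,0)$ and $r(AB)=r(BA)$ is fine, as are the decomposition $\varUpsilon(t,s)=\varUpsilon(s+\rho,s)P_s^{k}$, the uniform bound $M<\infty$ obtained from strong continuity on the compact set $\{(s+\rho,s):s,\rho\in[0,T]\}$ plus Banach--Steinhaus, and the reverse inequality obtained by specialising to $t=s+kT$.

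The one step that needs more care than you give it is the uniformity in $s$ of the forward estimate: the definition \eqref{equ:E_G_B} demands a single constant valid for all $s\in\Real$, whereas Gelfand's formula applied to each $P_s$ separately yields only an $s$-dependent constant; ``reducing $s$ mod $T$'' equalises the spectral radii $r(P_s)=r(\varUpsilon(T,0))$ but not the constants in $\Vert P_s^{k}\Vert\le C_s\,(r+\epsilon)^{k}$. The repair is one line within your own framework: for $s\in[0,T]$ and $k\ge 1$,
$$
P_s^{k}=\varUpsilon(s+kT,s)=\varUpsilon(s+kT,kT)\,\varUpsilon(kT,T)\,\varUpsilon(T,s)=\varUpsilon(s,0)\,\varUpsilon(T,0)^{k-1}\,\varUpsilon(T,s),
$$
so $\Vert P_s^{k}\Vert\le M^{2}\Vert \varUpsilon(T,0)^{k-1}\Vert$ with your constant $M$, and Gelfand's formula applied to the single operator $\varUpsilon(T,0)$ then gives the exponential bound uniformly in $s$. (In the degenerate case $r(\varUpsilon(T,0))=0$ the same computation gives $\omega(\varUpsilon)=-\infty$, consistent with the convention $\ln 0=-\infty$.) With this adjustment your argument is complete and matches the cited result.
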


Let $Y$ be a Banach space equipped the norm $\Vert \cdot \Vert_{Y}$ with the positive cone $Y_+$ being normal and reproducing. It is easy to see that 
$$
\bY:=\{ \bmu \in C(\Real,Y): \bmu(t)=\bmu(t+T), ~t \in \Real \}
$$
is an ordered  Banach space with the positive cone
$$
\bY_+:=\{ \bmu \in C(\Real,Y_+): \bmu(t)=\bmu(t+T), ~t \in \Real \}
$$
and the maximum norm $\Vert \cdot \Vert_{\bY} $.

Let $\{\Phi(t,s): t\geq s \}$ be a $T$-periodic evolution family on $Y$ and let $(F(t))_{t \in \Real}$ be a family of bounded linear operators on $Y$ such that
$F(t+T)=F(t)$ for all $t \in \Real$. We assume that 
\begin{enumerate}
	\item[(A1)] $F(t)$ is positive for each $t \in \Real$, and $F(t)$ is strongly continuous in $t \in \Real$ in the sense that for any $y \in Y$, $t_0 \in \Real$, $\Vert F(t) y- F(t_0) y \Vert_Y \rightarrow 0$ as $t \rightarrow t_0^+$.
	
	\item[(A2)]  $\Phi(t,s)$ is positive for any $t,s \in \Real$ with $t \geq s$, and $\omega(\Phi)<0$.
\end{enumerate}

For each $\mu >0$, let $\{\Psi_{\mu}(t,s): t\geq s \}$ be the uniquely determined family on $Y$ which solves 
\begin{equation}\label{equ:Phi_Psi}
\Psi_{\mu}(t,s)\phi - \Phi(t,s) \phi =
\frac{1}{\mu} \int_{s}^{t}\Psi_{\mu}(t,\tau)F(\tau) \Phi(\tau,s) \phi \D \tau= \frac{1}{\mu} \int_{s}^{t}\Phi(t,\tau) F(\tau) \Psi_{\mu}(\tau,s) \phi \D \tau.
\end{equation}
By \cite[Corollary 5]{martin1990abstract}, $\Psi_{\mu}(t,s)\phi$ is positive for any $t,s \in \Real$ with $t \geq s$ and $\mu >0$.

In mathematical epidemiology, $F(t)$ represents the infection operator at time $t$ and
$\Phi (t,s)$ is generated by solutions of a periodic  internal evolution system of the populations at some infected compartments.  As such, $F(t)u(t)$ denotes the distribution of newly infected individuals at time $t$; $\Phi (t,t-s) F(t-s)u (t-s)$ is the distribution of those infected individuals who were newly infected at time $t-s$ and remain in the infected compartments at time $t$;
and $F(t)\Phi(t,t-s) u (t-s)$ is  the distribution of the individuals newly infected at time $t$ by those infected individuals who 
were introduced at time $t-s$.  We introduce two bounded linear positive operators $L:\bY \rightarrow \bY $ and $\hat{L}:\bY \rightarrow \bY $ by
\begin{equation}\label{equ:L}
[L u](t): = \int_{0}^{+\infty} \Phi(t,t-s)F(t-s)u (t-s)\D s, ~t \in \Real,~ u \in \bY,
\end{equation}
and
\begin{equation}\label{equ:hatL}
[\hat{L} u](t): =  \int_{0}^{+\infty} F(t) \Phi(t,t-s) u (t-s)\D s, ~t \in \Real,~ u \in \bY.
\end{equation}
Let $L_1$ and $L_2$ be two bounded linear positive operators from $\bY$ to $\bY$ defined by
$$
[L_1 u](t): = \int_{0}^{+\infty} \Phi(t,t-s) u (t-s)\D s,~ [L_2 u](t):=F(t)u(t), ~t \in \Real,~ u \in \bY.
$$
It then follows that $L=L_1 \circ L_2$ and $\hat{L}=L_2 \circ L_1$, and hence, $L$ and $\hat{L}$ have the same spectral raidus.
Following \cite{liang2019basic,wang2008threshold,wang2012basic,thieme2009spectral}, we define the spectral radius of $L$ and $\hat{L}$ on $\bY$ as the basic reproduction ratio, that is, $$\R=r(L)=r(\hat{L}).$$
According to Lemmas \ref{lem:R0:equiv} and  \ref{lem:w_theta:equ} and  \cite[Section 5]{thieme2009spectral} (see also \cite{liang2019basic}), 
we then have the following two results.
\begin{theorem}\label{thm:R0:periodic}
	Assume that {\rm (A1)} and {\rm (A2)} hold. Then the following statements are valid:
	\begin{enumerate}
		\item[\rm (i)]  $\R -1$ has the same sign as $\omega(\Psi_1)$.
		\item[\rm (ii)] If $\R>0$, then $\mu=\R$ is the unique solution of $\omega(\Psi_{\mu})=0$. 
	\end{enumerate}
\end{theorem}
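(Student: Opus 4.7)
The plan is to translate the problem into the abstract resolvent-positive-operator framework of Lemma \ref{lem:R0:equiv} and then invoke Lemma \ref{lem:w_theta:equ} to identify the spectral bounds of the relevant operators on $\bY$ with the exponential growth bounds of the evolution families $\Psi_\mu$. This is essentially the strategy of \cite[Section 5]{thieme2009spectral} recast in the present notation.

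First I would introduce two linear operators on the ordered Banach space $(\bY,\bY_+)$: let $B$ be the closed operator encoding the internal evolution generated by $\Phi$ (heuristically $[Bu](t) = \mathcal{A}(t) u(t) - u'(t)$, where $\mathcal{A}(t)$ is the generator of $\Phi(t,s)$), and let $C$ be the bounded multiplication operator $[Cu](t) = F(t) u(t)$. Assumption (A1) makes $C$ positive on $\bY$. Assumption (A2), together with Lemma \ref{lem:w_theta:equ}, yields $s(B) = \omega(\Phi) < 0$. Moreover, $B$ is resolvent positive because, for every $\lambda > \omega(\Phi)$, its resolvent on $\bY$ is given by the positive convolution
\[
[(\lambda I - B)^{-1} u](t) = \int_0^{+\infty} e^{-\lambda s}\, \Phi(t,t-s)\, u(t-s)\, \D s, \qquad u \in \bY.
\]
Setting $\lambda = 0$ gives $-B^{-1} = L_1$, so $-B^{-1} C = L_1 \circ L_2 = L$, and hence $r(-B^{-1} C) = r(L) = \R$.

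Next, for each $\mu > 0$, I would identify the family $\Psi_\mu(t,s)$ determined by (\ref{equ:Phi_Psi}) as the evolution family generated on $\bY$ by $B + \tfrac{1}{\mu} C$. Since $\tfrac{1}{\mu} C$ is bounded and positive, the standard bounded-perturbation theory keeps $B + \tfrac{1}{\mu} C$ resolvent positive. Applying Lemma \ref{lem:w_theta:equ} to $\Psi_\mu$ then gives $\omega(\Psi_\mu) = T^{-1} \ln r(\Psi_\mu(T,0))$, which, by the correspondence between periodic evolution families and their generating operator on the space of $T$-periodic functions, has the same sign as $s(B + \tfrac{1}{\mu} C)$.

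With these identifications in hand, part (i) is immediate from Lemma \ref{lem:R0:equiv}(i) with $\mu = 1$: the quantity $\R - 1 = r(-B^{-1} C) - 1$ has the same sign as $s(B + C)$, which in turn has the same sign as $\omega(\Psi_1)$. For part (ii), Lemma \ref{lem:R0:equiv}(ii) gives that when $\R > 0$ the value $\mu = \R$ is the unique positive number with $s(B + \tfrac{1}{\mu} C) = 0$; by the sign identification this is equivalent to $\omega(\Psi_\mu) = 0$. The main technical obstacle is the careful operator-theoretic setup in the first two steps: realizing $B$ as a resolvent positive closed operator on $\bY$ with $-B^{-1} = L_1$, and verifying that $\Psi_\mu$ is indeed the evolution family associated to $B + \tfrac{1}{\mu} C$ in such a way that its exponential growth bound matches the spectral bound. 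These are precisely the technical points worked out in \cite[Section 5]{thieme2009spectral}, which I would cite rather than reprove.
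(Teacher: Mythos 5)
Your proposal is correct and follows essentially the same route as the paper: the paper derives the theorem directly from Lemma \ref{lem:R0:equiv}, Lemma \ref{lem:w_theta:equ} and the operator-theoretic setup of \cite[Section 5]{thieme2009spectral}, which is exactly the identification you spell out ($-B^{-1}C=L_1\circ L_2=L$ on $\bY$, $\Psi_\mu$ generated by $B+\tfrac{1}{\mu}C$, and the sign correspondence between $s(B+\tfrac{1}{\mu}C)$ and $\omega(\Psi_\mu)$). Your write-up simply makes explicit the details the paper delegates to the citations, so no gap to report.
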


\begin{corollary}\label{cor:periodic:R0}
	Assume that {\rm (A1)} and {\rm (A2)} hold. Then the following statements are valid:
	\begin{enumerate}
		\item[\rm (i)] For any $\mu >0$, $\R -\mu$ has the same sign as
		$\omega(\Psi_{\mu})$.
		\item[\rm (ii)] If $\omega(\Psi_{\mu_0})\geq 0$ for some  $\mu_0>0$, then $\R >0$. Thus, if $\R=0$, then $\omega(\Psi_{\mu})<0$ for all $\mu >0$. Conversely, if $\omega(\Psi_{\mu})<0$ for all $\mu >0$, then $\R=0$.
	\end{enumerate}
\end{corollary}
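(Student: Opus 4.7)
The plan is to deduce this corollary from Theorem \ref{thm:R0:periodic} by a simple rescaling argument, mirroring how the authors derived Lemma \ref{lem:R0:equiv} from Lemma \ref{lem:R0:equiv:restatement}. Fix $\mu>0$ and consider the infection operator $\tilde F(t):=\frac{1}{\mu}F(t)$, keeping the evolution family $\Phi(t,s)$ unchanged. Clearly $\tilde F$ still satisfies (A1) and $\omega(\Phi)<0$ is untouched, so (A2) holds as well. The key observation is that the defining relation \eqref{equ:Phi_Psi} for $\Psi_\mu$ coincides exactly with the defining relation for the "$\Psi_1$" associated with the pair $(\Phi,\tilde F)$; hence the evolution family for $(\Phi,\tilde F)$ is precisely $\Psi_\mu$. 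Moreover, by the definition \eqref{equ:L}, the operator $L$ is linear in the infection term, so replacing $F$ by $\frac{1}{\mu}F$ replaces $L$ by $\frac{1}{\mu}L$, and consequently the corresponding basic reproduction ratio equals $\R/\mu$.

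For part (i), I would then apply Theorem \ref{thm:R0:periodic}(i) to the rescaled data $(\Phi,\tilde F)$: the quantity $\R/\mu - 1$ has the same sign as $\omega(\Psi_\mu)$, which is equivalent to saying $\R-\mu$ has the same sign as $\omega(\Psi_\mu)$, proving (i).

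For part (ii), the first implication is immediate from (i): if $\omega(\Psi_{\mu_0})\geq 0$, then $\R-\mu_0\geq 0$, so $\R\geq \mu_0>0$. The contrapositive gives the second assertion. For the converse, assume $\omega(\Psi_\mu)<0$ for every $\mu>0$; then by (i) we have $\R<\mu$ for all $\mu>0$, and since $\R=r(L)\geq 0$ as a spectral radius of a positive operator, we conclude $\R=0$. (Alternatively, if $\R>0$, then Theorem \ref{thm:R0:periodic}(ii) gives $\omega(\Psi_\R)=0$, contradicting the hypothesis.)

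The only non-routine point is the verification that the solution family of \eqref{equ:Phi_Psi} associated with the rescaled infection operator $\tilde F=F/\mu$ is literally $\Psi_\mu$; this should be read off directly from \eqref{equ:Phi_Psi} by substituting $\tilde F$ for $F$ with parameter $1$, so no real obstacle arises. Everything else is a mechanical translation of Lemma \ref{lem:R0:equiv}(ii)--(iv) into the periodic setting via Lemma \ref{lem:w_theta:equ}, which links the exponential growth bound $\omega(\Psi_\mu)$ to the spectral radius of the period map $\Psi_\mu(T,0)$.
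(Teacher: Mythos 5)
Your proposal is correct, but it reaches the corollary by a somewhat different route than the paper. The paper derives Corollary \ref{cor:periodic:R0} (together with Theorem \ref{thm:R0:periodic}) directly from the abstract resolvent-positive-operator result, Lemma \ref{lem:R0:equiv}, combined with Lemma \ref{lem:w_theta:equ} and the identifications from Thieme's Section 5: one realizes $\omega(\Psi_\mu)$ as $s(B+\frac{1}{\mu}C)$ for a suitable resolvent positive operator $B$ with $s(B)<0$ and a positive operator $C$ on $\bY$, and $\R=r(-B^{-1}C)$, so parts (i) and (ii) are exactly Lemma \ref{lem:R0:equiv}(i),(iii),(iv) read in that setting. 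You instead take Theorem \ref{thm:R0:periodic} as a black box and reduce general $\mu$ to $\mu=1$ via the rescaling $F\mapsto\frac{1}{\mu}F$, checking that (A1)--(A2) persist, that the uniquely determined family in \eqref{equ:Phi_Psi} for the rescaled pair is literally $\Psi_\mu$, and that \eqref{equ:L} is linear in $F$ so the rescaled ratio is $\R/\mu$; this is the same $1/\mu$-scaling idea the authors use at the abstract level to pass from Lemma \ref{lem:R0:equiv:restatement} to Lemma \ref{lem:R0:equiv}, but carried out concretely on the evolution families. Your route buys self-containedness: it needs no return to the abstract framework or to the identification of $\omega(\Psi_\mu)$ with a spectral bound, only uniqueness in \eqref{equ:Phi_Psi}, homogeneity of the spectral radius, and $\R=r(L)\geq 0$ (or Theorem \ref{thm:R0:periodic}(ii)) to settle the converse in (ii); the paper's route, in exchange, yields the theorem and the corollary simultaneously from one abstract lemma. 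All steps in your argument, including the treatment of the borderline case $\omega(\Psi_{\mu_0})=0$ in (ii), are sound.
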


Now we are in a position to prove the continuity of the basic reproduction ratio.

\begin{theorem}\label{thm:R0:periodic:conti}             
	Let $(\Theta,d)$ be a metric space. For any $\theta \in \Theta$, let $\{\Phi^{\theta}(t,s): t\geq s \}$ be a $T$-periodic evolution family on $Y$, and let $(F^{\theta}(t))_{t \in \Real}$ be a family of bounded linear operators on $Y$ such that $F^{\theta}(t+T)=F^{\theta}(t)$ for all $t \in \Real$. 
	Assume that for each $\theta \in \Theta$, 
	{\rm (A1)} and {\rm (A2)} hold with $\Phi$ and $F$ replaced by $\Phi^{\theta}$ and $F^{\theta}$, respectively. For any $\theta \in \Theta$, let $\Psi_{\mu}^{\theta}$, $L^{\theta}$ and $\R^{\theta}$ be defined similarly to $\Psi_{\mu}$, $L$ and $\R$. Then $\lim\limits_{\theta \rightarrow \theta_0} \R^{\theta} = \R^{\theta_0}$ for some $\theta_0 \in \Theta$ provided that $\lim\limits_{\theta \rightarrow \theta_0}\omega(\Psi_{\mu}^{\theta}) =\omega(\Psi_{\mu}^{\theta_0})$ for all $\mu \in (0,\infty)$. 
\end{theorem}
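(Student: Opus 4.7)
The plan is to mimic the proof of Theorem \ref{thm:R0:conti} almost verbatim, but to work entirely on the periodic-evolution side, replacing the spectral-bound quantities $s(B_\theta+\tfrac{1}{\mu}C_\theta)$ by the exponential growth bounds $\omega(\Psi^{\theta}_{\mu})$, and replacing Lemma \ref{lem:R0:equiv} by its periodic counterparts, namely Theorem \ref{thm:R0:periodic} and Corollary \ref{cor:periodic:R0}. The dictionary is clean: $\R^\theta$ plays the role of $r(-(B_\theta)^{-1}C_\theta)$ and $\omega(\Psi^\theta_\mu)$ plays the role of $s(B_\theta+\tfrac{1}{\mu}C_\theta)$. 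The continuity hypothesis of the theorem, $\lim_{\theta\to\theta_0}\omega(\Psi^\theta_\mu)=\omega(\Psi^{\theta_0}_\mu)$, is exactly what replaces assumption (ii) of Theorem \ref{thm:R0:conti}.

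First I would split into two cases. In \emph{Case 1}, $\R^{\theta_0}>0$. Fix $\epsilon\in(0,\R^{\theta_0})$. By Corollary \ref{cor:periodic:R0}(i) applied at $\theta_0$ with $\mu=\R^{\theta_0}\pm\epsilon$,
\begin{equation*}
\omega(\Psi^{\theta_0}_{\R^{\theta_0}-\epsilon})>0\quad\text{and}\quad \omega(\Psi^{\theta_0}_{\R^{\theta_0}+\epsilon})<0.
\end{equation*}
The continuity assumption then yields a $\delta>0$ such that the same two strict inequalities hold with $\theta_0$ replaced by any $\theta$ with $d(\theta,\theta_0)\le\delta$. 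Applying Corollary \ref{cor:periodic:R0}(i) in the other direction at such $\theta$ gives $\R^{\theta}-(\R^{\theta_0}-\epsilon)>0$ and $\R^{\theta}-(\R^{\theta_0}+\epsilon)<0$, i.e.\ $|\R^\theta-\R^{\theta_0}|<\epsilon$, which proves $\R^\theta\to\R^{\theta_0}$.

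In \emph{Case 2}, $\R^{\theta_0}=0$, I would introduce $\Lambda:=\{\theta\in\Theta:\R^\theta>0\}$, which is exactly the set where Theorem \ref{thm:R0:periodic}(ii) can be invoked to state that $\mu=\R^\theta$ is a root of $\omega(\Psi^\theta_\mu)=0$. Given $\epsilon>0$, Corollary \ref{cor:periodic:R0}(ii) together with $\R^{\theta_0}=0$ gives $\omega(\Psi^{\theta_0}_\epsilon)<0$; the continuity hypothesis furnishes $\delta>0$ so that $\omega(\Psi^\theta_\epsilon)<0$ whenever $d(\theta,\theta_0)\le\delta$. For such $\theta\in\Lambda$, Corollary \ref{cor:periodic:R0}(i) forces $\R^\theta<\epsilon$, while for $\theta\notin\Lambda$ trivially $\R^\theta=0<\epsilon$. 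Hence $\R^\theta\to 0=\R^{\theta_0}$.

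I do not expect any real obstacle here, because the whole point of the abstract Theorem \ref{thm:R0:conti} is to be ported to the periodic evolution setting and the periodic tools (Theorem \ref{thm:R0:periodic} and Corollary \ref{cor:periodic:R0}) already package exactly the sign/uniqueness information used in that abstract proof. The only delicate point is the bookkeeping in Case 2: one must remember that the hypothesis provides pointwise continuity of $\omega(\Psi^\theta_\mu)$ for each fixed $\mu>0$, and that this is enough because the argument only needs continuity at the single auxiliary parameter $\mu=\epsilon$ rather than uniform control in $\mu$; this is the same subtlety that distinguishes Theorem \ref{thm:R0:conti} from Theorem \ref{thm:R0:conti:0}, and handling it exactly as in those proofs suffices.
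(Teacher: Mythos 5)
Your proposal is correct and follows essentially the same route as the paper's own proof, which likewise transplants the two-case argument of Theorem \ref{thm:R0:conti} to the periodic setting by replacing $s(B_\theta+\frac{1}{\mu}C_\theta)$ with $\omega(\Psi_\mu^{\theta})$ and invoking Corollary \ref{cor:periodic:R0} (and Theorem \ref{thm:R0:periodic}) in place of Lemma \ref{lem:R0:equiv}. Your extra remark that $\theta\notin\Lambda$ is trivial in Case 2 is a minor clarification the paper leaves implicit; otherwise the two arguments coincide.
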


\begin{proof}
	Indeed, this theorem is a straightforward consequence of Theorem \ref{thm:R0:conti} and \cite[Section 5]{thieme2009spectral}. For reader's convenience, below we provide an elementary proof by modifying the arguments for Theorem \ref{thm:R0:conti}. It suffices to consider two cases:
	
	{\it Case 1.} $\R^{\theta_0}>0$.
	For any given $\epsilon \in (0,\R^{\theta_0})$, 
	by Corollary \ref{cor:periodic:R0}(i), we obtain
	$$
	\omega(\Psi_{\R^{\theta_0}-\epsilon}^{\theta_0})>0,
	\text{ and }
	\omega(\Psi_{\R^{\theta_0}+\epsilon}^{\theta_0})<0.
	$$
	Thanks to the assumption $\lim\limits_{\theta \rightarrow \theta_0}\omega(\Psi_{\mu}^{\theta}) =\omega(\Psi_{\mu}^{\theta_0})$ for all $\mu \in (0,\infty)$, there exists $\delta>0$ such that if $d(\theta,\theta_0) \leq \delta$, then $\omega(\Psi_{\R^{\theta_0}-\epsilon}^{\theta})>0$ and $\omega(\Psi_{\R^{\theta_0}+\epsilon}^{\theta})<0$. By Corollary \ref{cor:periodic:R0}(i) again, it follows that
	$$
	0<\R^{\theta_0}- \epsilon < \R^{\theta} < \R^{\theta_0}+ \epsilon$$
	provided $d(\theta,\theta_0) \leq \delta$. This shows that $\lim\limits_{\theta \rightarrow \theta_0} \R^{\theta} = \R^{\theta_0}$.
	
	{\it Case 2.} $\R^{\theta_0}=0$.
	Let $$\Lambda:=\{\theta \in \Theta: \R^{\theta}>0\}.$$  Then Corollary \ref{cor:periodic:R0}(i) implies that $\omega(\Psi_{\R^{\theta}}^{\theta})=0$, $\forall \theta \in \Lambda$. For any given $\epsilon>0$, using Corollary \ref{cor:periodic:R0}(ii) and $ \R^{\theta_0}=0$, we have 
	$
	\omega(\Psi_{\epsilon}^{\theta_0})<0
	$. By assumption $\lim\limits_{\theta \rightarrow \theta_0}\omega(\Psi_{\mu}^{\theta}) =\omega(\Psi_{\mu}^{\theta_0})$ for all $\mu \in (0,\infty)$ again, there exists $\delta>0$ such that $
	\omega(\Psi_{\epsilon}^{\theta})<0
	$ for all $\theta \in \Lambda$ with $d(\theta,\theta_0) \leq \delta$. In view of  Corollary \ref{cor:periodic:R0}(i), we conclude that $\R^{\theta}< \epsilon$ provided $\theta \in \Lambda$ and $d(\theta,\theta_0) \leq \delta$. Thus, $\lim\limits_{\theta \rightarrow \theta_0} \R^{\theta} = 0$.
\end{proof}

\begin{theorem}\label{thm:R0:periodic:conti:0}
	Let $(\Theta,d)$ be a metric space and let $\theta_0 \in \Theta$  be given. For any $\theta \in \Theta\setminus \{ \theta_0\}$, let $\{\Phi^{\theta}(t,s): t\geq s \}$ be a $T$-periodic evolution family on $Y$, and let $(F^{\theta}(t))_{t \in \Real}$ be a  family of bounded linear operators on $Y$ such that $F^{\theta}(t+T)=F^{\theta}(t)$
	for all $t \in \Real$.  Assume that for each $\theta \in \Theta\setminus \{ \theta_0\}$,
	{\rm (A1)} and {\rm (A2)} hold with $\Phi$ and $F$ replaced by $\Phi^{\theta}$ and $F^{\theta}$,  respectively, and  let $\Psi_{\mu}^{\theta}$, $L^{\theta}$ and $\R^{\theta}$ be defined similarly to $\Psi_{\mu}$, $L$ and $\R$. Then $\lim\limits_{\theta \rightarrow \theta_0} \R^{\theta} = 0$ provided that $\limsup\limits_{\theta \rightarrow \theta_0}\omega(\Psi_{\mu}^{\theta}) <0$ for all $\mu \in (0,\infty)$. 
\end{theorem}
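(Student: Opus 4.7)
The plan is to adapt the Case 2 portion of the proof of Theorem \ref{thm:R0:periodic:conti}, because the desired limit $0$ plays the same role here as $\R^{\theta_0}=0$ did there, even though $\R^{\theta_0}$ itself is not defined in the present setting. The entire strategy reduces to a one-sided estimate: for every $\epsilon>0$ produce a punctured neighborhood of $\theta_0$ on which $\R^{\theta}<\epsilon$.

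First I would split the approach according to whether $\R^{\theta}$ vanishes by introducing
\[
\Lambda := \{\theta \in \Theta\setminus\{\theta_0\} : \R^{\theta}>0\}.
\]
For $\theta\in(\Theta\setminus\{\theta_0\})\setminus \Lambda$, one has $\R^{\theta}=0<\epsilon$ automatically, so no further work is needed for these points. The nontrivial task is to control $\R^{\theta}$ uniformly on a neighborhood of $\theta_0$ intersected with $\Lambda$.

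For $\theta\in\Lambda$ the key tool is Corollary \ref{cor:periodic:R0}(i), which asserts that $\R^{\theta}-\mu$ has the same sign as $\omega(\Psi_{\mu}^{\theta})$. Fix $\epsilon>0$; the hypothesis $\limsup_{\theta\to\theta_0}\omega(\Psi_{\epsilon}^{\theta})<0$ yields $\delta>0$ such that $\omega(\Psi_{\epsilon}^{\theta})<0$ whenever $0<d(\theta,\theta_0)\leq\delta$. Applying Corollary \ref{cor:periodic:R0}(i) with $\mu=\epsilon$ then gives $\R^{\theta}<\epsilon$ for every $\theta\in\Lambda$ in this $\delta$-neighborhood.

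Combining the two cases shows that $\R^{\theta}<\epsilon$ for all $\theta\in\Theta\setminus\{\theta_0\}$ with $d(\theta,\theta_0)\leq\delta$, which is exactly $\lim_{\theta\to\theta_0}\R^{\theta}=0$. There is no real obstacle; the argument is a direct transcription of Case 2 of Theorem \ref{thm:R0:periodic:conti}, with the sole modification that the equation $\omega(\Psi_{\R^{\theta}}^{\theta})=0$ coming from Corollary \ref{cor:periodic:R0}(ii) is meaningful only on $\Lambda$, which is precisely why the partition into $\Lambda$ and its complement is made at the outset. The upgrade from $\lim$ to $\limsup$ in the hypothesis is exactly what is needed to avoid invoking any behavior of $\Psi_{\mu}^{\theta_0}$, which is not assumed to exist.
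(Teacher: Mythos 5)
Your proposal is correct and follows essentially the same route as the paper's proof: define $\Lambda$ of parameters with positive $\R^{\theta}$, use the hypothesis with $\mu=\epsilon$ to get $\omega(\Psi_{\epsilon}^{\theta})<0$ near $\theta_0$, and conclude $\R^{\theta}<\epsilon$ via Corollary \ref{cor:periodic:R0}(i). The only differences are cosmetic: you make the trivial case $\R^{\theta}=0$ explicit and drop the (unused) observation that $\omega(\Psi_{\R^{\theta}}^{\theta})=0$ on $\Lambda$, neither of which changes the argument.
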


\begin{proof}
	The proof is motivated by the arguments for Theorem \ref{thm:R0:conti:0}. Let $$\Lambda:=\{\theta \in \Theta: \R^{\theta}>0, ~\theta \neq \theta_0\}.$$ 
	Then Corollary \ref{cor:periodic:R0}(i) implies that $\omega(\Psi_{\R^{\theta}}^{\theta})=0$, $\forall \theta \in \Lambda$. For any given $\epsilon>0$, by assumption $\limsup\limits_{\theta \rightarrow \theta_0}\omega(\Psi_{\mu}^{\theta}) <0$ for all $\mu \in (0,\infty)$, there exists $\delta>0$ such that 
	$\omega(\Psi_{\epsilon}^{\theta})<0$ for all $\theta \in \Lambda$ with $d(\theta,\theta_0) \leq \delta$. 
	Thanks to Corollary \ref{cor:periodic:R0}(i) again, we conclude that $\R^{\theta}< \epsilon$ provided $\theta \in \Lambda$ and $d(\theta,\theta_0) \leq \delta$. Thus, $\lim\limits_{\theta \rightarrow \theta_0} \R^{\theta} = 0$.
\end{proof}

\section{The principal eigenvalue}\label{sec:AB}
In this section, we  investigate the asymptotic behavior of the principal eigenvalue for periodic cooperative reaction-diffusion systems as the diffusion coefficients go to infinity.  We start with some notations and 
the related known results. 

Consider the following periodic parabolic linear system:
\begin{equation}\label{equ:sys:main}
\begin{cases}
\frac{\partial \bmv}{\partial t}-  \bmkp \cL(x,t) \bmv - \cM(x,t) \bmv =0, &x \in \Omega, ~t>0,\\
\cB \bmv =0, & x \in \partial\Omega,~ t>0,
\end{cases}
\end{equation}
and the associated eigenvalue problem:
\begin{equation}\label{equ:eig:main}
\begin{cases}
\frac{\partial \bmu}{\partial t}= \bmkp \cL(x,t) \bmu + \cM(x,t) \bmu +\lambda \bmu, &(x,t) \in \Omega \times \Real,\\
\cB \bmu =0, &(x,t) \in \partial\Omega \times \Real.
\end{cases}
\end{equation}
Here $\bmv=(v_1,v_2,\cdots,v_n)^T$ and $\bmu=(u_1,u_2,\cdots,u_n)^T $; $\bmkp={\rm diag}(\kappa_1,\kappa_2,\cdots,\kappa_n)$ with $\kappa_i>0$; 
$\cL(x,t)={\rm diag}(\cL_1,\cL_2,\cdots,\cL_n)(x,t)$ is of the  divergence form
\begin{equation}\label{equ:sym:diver}
\cL_{i}(x,t) u_i: = \sum_{p,q=1}^{N} \frac{\partial}{\partial x_q}\left( a_{pq}^i (x,t) \frac{\partial}{\partial x_p } u_i\right), ~1 \leq i \leq n,
\end{equation}
where $a_{pq}^{i} \in C^{1,\alpha}(\oOmega \times \Real)$ and $\frac{\partial}{\partial x_q}a_{pq}^{i} \in C^{\alpha}(\oOmega \times \Real) $ with $a_{pq}^{i}(x,t)=a_{pq}^{i}(x,t+T)$ and $a_{pq}^{i}(x,t)=a_{qp}^{i}(x,t)$, $\forall 1 \leq p,q \leq N,~ 1 \leq i \leq n,~ (x,t) \in \oOmega \times \Real$ ($0<\alpha<1$); $ \underline{a} \sum_{p=1}^{N} \xi_p^2 \leq  \sum_{p,q=1}^{N} a_{pq}^{i}(x,t) \xi_p \xi_q \leq  \overline{a} \sum_{p=1}^{N} \xi_p^2$, $\forall (x,t) \in \oOmega \times \Real$ for some $\overline{a} \geq \underline{a}>0$; 
$\cM=(m_{ij})_{n\times n}$ is a H\"{o}lder continuous $n\times n$ matrix-valued function of $(x,t) \in \overline{\Omega}\times \Real$ with $\cM(x,t)=\cM(x,t+T)$ in the sense that each $m_{ij}$ is H\"{o}lder continuous on $\overline{\Omega}\times \Real$ with $m_{ij}(x,t)=m_{ij}(x,t+T)$, $\forall 1 \leq i,j \leq n$, $(x,t) \in \oOmega \times \Real$; $\cB={\rm diag}(\cB_1,\cB_2,\cdots,\cB_n)$ denotes  a boundary operator such that for each $i$, $\cB_i$ represents either the Dirichlet boundary condition
\begin{equation}\label{equ:Dirichlet}
\cB_i u_i := u_i, ~ \text{ on }\partial \Omega,
\end{equation}
or the Neumann boundary condition
\begin{equation}\label{equ:Neumann}
\cB_i u_i := \sum_{p,q=1}^{N} a_{pq}^{i}\frac{\partial u_i}{\partial x_p}\cos(\bmnu,\bm{x}_q), ~\text{ on }\partial \Omega,
\end{equation} 
or the Robin boundary condition
\begin{equation}\label{equ:Robin}
\cB_i u_i := \sum_{p,q=1}^{N} a_{pq}^{i}\frac{\partial u_i}{\partial x_p}\cos(\bm{\nu},\bm{x}_q)+ b_i u_i, ~ \text{ on }\partial \Omega,
\end{equation}
where $\bmnu$ is the outward unit normal vector of the boundary $\partial \Omega$; $\bm{x}_q$ is a unit $N$-dimensional vector whose only $q$-th component is nonzero; $b_i$ is a H\"{o}lder continuous function of $(x,t)\in \oOmega$ with $b_i(x,t) > 0$ and $b_i(x,t)=b_i(x,t+T)$, $\forall (x,t) \in \oOmega \times \Real$. We say $\cB$ satisfies the Dirichlet (Neumann and Robin) boundary condition if all $\cB_i$ satisfy the Dirichlet (Neumann and Robin) boundary conditions given in \eqref{equ:Dirichlet} (\eqref{equ:Neumann} and \eqref{equ:Robin}). Throughout this paper, we always assume that $\cB$ satisfies the Dirichlet or Neumann or Robin boundary condition. We further assume that
\begin{itemize}
	\item[(M)] $\cM(x,t)$ is cooperative for all $(x,t) \in \oOmega \times \Real$.
\end{itemize}

\begin{definition}\label{def:principal}
$\lambda^*$ is called the principal eigenvalue of \eqref{equ:eig:main} if it is a real eigenvalue with a nonnegative eigenfunction and the real parts
of all other eigenvalues are greater than $\lambda^*$.
\end{definition}

We  use $X$ and $X_+$ to represent $X_0:=C_0(\oOmega,\Real^n)$ and $X_{0,+}:=C_0(\oOmega,\Real^n_+)$ if $\cB$ satisfies the Dirichlet boundary condition, and $X_1:=C(\oOmega,\Real^n)$ and $X_{1,+}:=C(\oOmega,\Real^n_+)$ if $\cB$ satisfies the Neumann or Robin boundary condition. It is easy to see that $(X, X_{+})$  is an ordered Banach space with the maximum norm $\Vert \bm{\phi} \Vert_{X}= \max_{1 \leq i \leq n} \max_{x \in \oOmega} \vert \phi_i(x) \vert$
for $\bm{\phi}=(\phi_1,\phi_2,\cdots,\phi_n)^T$. We equip the space of $T$-periodic functions 
$$
\bX:=\{ \bmu \in C( \Real,X): \bmu(t)=\bmu(t+T), ~t \in \Real\}
$$
with the positive cone
$$
\bX_+:=\{ \bmu \in C( \Real,X_+): \bmu(t)=\bmu(t+T),~ t \in \Real\}
$$
and the norm $\Vert \bmu \Vert_{\bX} = \max_{0 \leq t \leq T} \Vert \bmu (t) \Vert_{X}$. Then $(\bX, \bX_+)$ is
an ordered Banach space. For any $\bmu \in \bX$, we use $\bmu(x,t)$ to represent $[\bmu(t)](x)$, $(x,t)\in \oOmega \times \Real$ unambiguously. In the case where $X=X_1$, $\bX$ can  be identified with the  Banach space
$$
\{ \bmu \in C(\oOmega \times \Real,\Real^n): \bmu(x,t)=\bmu(x,t+T), ~ (x,t)\in \oOmega \times \Real\}.
$$
In the case where $X=X_0$, $\bX$ can be  identified with the  Banach space
$$
\{ \bmu \in C(\oOmega \times \Real,\Real^n): \bmu(x,t)=\bmu(x,t+T), ~ (x,t)\in \oOmega \times \Real, \text{ and } \bmu(x,t)=0,~(x,t) \in \partial \Omega \times \Real\}.
$$

According to \cite[Section 13]{hess1991periodic} (see also  \cite[Chapters 5 and 6]{lunardi1995analytic}), system \eqref{equ:sys:main} admits a unique evolution family $\{\bU(t,s): t \geq s \}$ on $X$. By the celebrated Krein-Rutman theorem (see, e.g., \cite[Theorem 19.2]{deimling1985nonlinear}) and Lemma \ref{lem:w_theta:equ}, we have the following result (see, e.g., \cite[Theorem 2.7]{liang2017principal},  \cite[Theorem 1.3]{bai2020asymptotic}  and \cite[Theorem 3.4]{anton1992strong}).
\begin{theorem}\label{thm:principal:existence}
	Assume that {\rm (M)} holds, and $\cB$ satisfies the Dirichlet or Neumann or Robin boundary condition.
	Then the eigenvalue problem \eqref{equ:eig:main} admits the principal eigenvalue $\lambda^*= - \omega (\bU)= -\frac{\ln r(\bU(T,0))}{T}$.
\end{theorem}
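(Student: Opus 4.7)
The plan is to reduce the periodic eigenvalue problem \eqref{equ:eig:main} to a spectral problem for the Poincar\'e (period) map $\bU(T,0)$ on $X$, apply the Krein--Rutman theorem, and translate back via Lemma \ref{lem:w_theta:equ}. The connecting observation is that if $\bm{\phi}$ is a $T$-periodic eigenfunction of \eqref{equ:eig:main} with eigenvalue $\lambda$, then $\bm{\psi}(t) := e^{-\lambda t}\bm{\phi}(t)$ solves \eqref{equ:sys:main}, so $\bm{\psi}(T) = \bU(T,0)\bm{\psi}(0)$; combined with $T$-periodicity of $\bm{\phi}$, this yields $\bU(T,0)\bm{\phi}(\cdot,0) = e^{-\lambda T}\bm{\phi}(\cdot,0)$. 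Conversely, any eigenvector $\phi_0$ of $\bU(T,0)$ with eigenvalue $\alpha>0$ generates the $T$-periodic eigenfunction $\bm{\phi}(\cdot,t) := e^{\lambda t}\bU(t,0)\phi_0$ of \eqref{equ:eig:main} with $\lambda = -\frac{1}{T}\ln\alpha$. Under this bijective correspondence, the principal eigenvalue property in Definition \ref{def:principal} translates into: $r(\bU(T,0))>0$ is an eigenvalue of $\bU(T,0)$ with a nonnegative eigenvector, and $|\mu|<r(\bU(T,0))$ for every other spectral point $\mu$.

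I would then verify the hypotheses of Krein--Rutman for $\bU(T,0)$. Compactness follows from standard parabolic smoothing estimates in \cite[Chapters 5, 6]{lunardi1995analytic} or \cite[Section 13]{hess1991periodic}: the H\"older continuity of the coefficients and the regularizing effect of the parabolic semigroup send $X$ into a H\"older space that embeds compactly in $X$. Positivity of $\bU(t,s)$ follows from the cooperativity assumption (M) on $\cM(x,t)$ via the weakly coupled parabolic maximum principle. Since $X_+$ is a normal, reproducing cone, the Krein--Rutman theorem \cite[Theorem 19.2]{deimling1985nonlinear} yields that $r := r(\bU(T,0)) > 0$ is an eigenvalue of $\bU(T,0)$ with a nonnegative eigenvector $\phi_0 \in X_+\setminus\{0\}$. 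Setting $\lambda^* := -\frac{1}{T}\ln r$ and applying the propagation recipe above produces a nonnegative $T$-periodic eigenfunction of \eqref{equ:eig:main} for $\lambda^*$. The identity $\lambda^* = -\omega(\bU) = -\frac{\ln r(\bU(T,0))}{T}$ is then exactly Lemma \ref{lem:w_theta:equ}.

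The main obstacle is the strict inequality $|\mu| < r$ for all other spectral points of $\bU(T,0)$, which is what encodes the strict clause in Definition \ref{def:principal}: the classical Krein--Rutman theorem only yields $|\mu|\le r$, and strict dominance requires additional structure. In the irreducible case, this is supplied by the weakly coupled strong maximum principle combined with parabolic regularity, which makes $\bU(T,0)$ strongly positive so that the strong form of Krein--Rutman applies. In the reducible case, I would invoke Lemma \ref{lem:reducible} to put $\cM$ (and hence $\bU(T,0)$) in block lower triangular form, identify $r(\bU(T,0))$ with the maximum of the spectral radii of the diagonal blocks, apply the irreducible argument within each block, and assemble from these pieces both a global nonnegative eigenfunction and strict dominance of $r$. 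Since this is essentially the content of the cited references \cite[Theorem 2.7]{liang2017principal}, \cite[Theorem 1.3]{bai2020asymptotic} and \cite[Theorem 3.4]{anton1992strong}, a clean write-up mainly consists of marshaling those statements together with the eigenvector propagation argument above.
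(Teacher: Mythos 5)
Your overall route---pass to the Poincar\'e map $\bU(T,0)$, apply the Krein--Rutman theorem, and translate back through Lemma \ref{lem:w_theta:equ}, deferring strong positivity and the reducible case to \cite{liang2017principal}, \cite{bai2020asymptotic} and \cite{anton1992strong}---is exactly the route the paper takes; indeed the paper offers no written argument for this theorem beyond citing Krein--Rutman, Lemma \ref{lem:w_theta:equ} and those references. There is, however, one genuine gap in your write-up: in the version of Krein--Rutman you invoke (\cite[Theorem 19.2]{deimling1985nonlinear}) the condition $r(\bU(T,0))>0$ is a \emph{hypothesis}, not a conclusion, whereas you present it as an output of the theorem. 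If the spectral radius were zero, the theorem would give nothing and the asserted formula $\lambda^*=-\frac{\ln r(\bU(T,0))}{T}$ would be meaningless, so positivity of $r(\bU(T,0))$ must be established separately. It is easy to repair: since $\cM$ is bounded there is $c>0$ with $\cM(x,t)\geq -cI$, so by the comparison principle for cooperative parabolic systems $\bU(T,0)\geq e^{-cT}\bV(T,0)$ componentwise, where $\bV$ is the decoupled evolution family with $\cM\equiv 0$; each scalar diagonal period map has positive spectral radius by the classical scalar periodic-parabolic theory in \cite{hess1991periodic}, and monotonicity of the spectral radius for positive operators then yields $r(\bU(T,0))>0$. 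As written, your argument silently assumes this.

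A secondary, smaller point concerns your ``bijective correspondence.'' The map $\lambda\mapsto e^{-\lambda T}$ is not injective: all the eigenvalues $\lambda^*+\frac{2\pi i k}{T}$, $k\in\bZ$, are sent to the same period-map eigenvalue $r(\bU(T,0))$, and indeed if $\bmu^*$ is the principal eigenfunction then $e^{2\pi i k t/T}\bmu^*(x,t)$ is a genuine $T$-periodic (complex) eigenfunction of \eqref{equ:eig:main} with eigenvalue $\lambda^*+\frac{2\pi i k}{T}$, whose real part equals $\lambda^*$. Consequently, strict dominance of $r(\bU(T,0))$ in $\sigma(\bU(T,0))$ does not literally deliver the clause of Definition \ref{def:principal} that \emph{all} other eigenvalues of \eqref{equ:eig:main} have real part strictly greater than $\lambda^*$; the dominance one can prove (and what the cited results actually assert) lives at the level of the period map, i.e.\ modulo the trivial shifts by $\frac{2\pi i}{T}\bZ$. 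This is an imprecision inherited from the definition rather than a flaw in your strategy, but you should weaken the claimed bijection and state the spectral dominance for $\bU(T,0)$ rather than for the periodic eigenvalue problem itself.
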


For each $x \in \oOmega$, let $\{O_x(t,s): t \geq s\}$ be the evolution family on $\Real^n$ of the linear ODE system $$\frac{\D \bmv}{\D t}=\cM(x,t) \bmv,~ t>0.$$ 
Let $\{\bO(t,s): t \geq s\}$ be the evolution family on $X$ of $\frac{\D \bmv}{\D t}=\cM(x,t) \bmv, ~x \in \oOmega,~t>0$. 
\begin{proposition}\label{prop:eta}
	Assume that {\rm (M)} holds. Then $\omega(\bO)=\max_{ x \in \oOmega} \omega(O_x)=\max_{ x \in \oOmega} \frac{\ln r(O_x(T,0))}{T}$.
\end{proposition}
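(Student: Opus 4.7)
The plan is to reduce the identity to a spectral radius computation for a pointwise multiplication operator. Since the ODE $\frac{\D \bmv}{\D t} = \cM(x,t)\bmv$ has no spatial derivatives, $x$ appears only as a parameter, so the evolution family $\bO$ on $X$ admits the explicit pointwise representation $[\bO(t,s)\phi](x) = O_x(t,s)\phi(x)$ for all $\phi \in X$, $x \in \oOmega$, $t \geq s$. Applying Lemma \ref{lem:w_theta:equ} both to $\bO$ and to each $O_x$, the claim reduces to the identity
\[
r(\bO(T,0)) = \max_{x \in \oOmega} r(O_x(T,0)),
\]
the maximum being attained because $x \mapsto O_x(T,0)$ is continuous on the compact set $\oOmega$ (by continuous dependence of ODE solutions on parameters) and matrix spectral radius is continuous.

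For the upper bound $r(\bO(T,0)) \le R := \max_{x \in \oOmega} r(O_x(T,0))$, I would use a resolvent estimate. For any real $\lambda > R$, the matrix $\lambda I - O_x(T,0)$ is invertible for every $x \in \oOmega$, and continuity on the compact set $\oOmega$ gives a uniform bound on $\|(\lambda I - O_x(T,0))^{-1}\|$. Pointwise multiplication by $(\lambda I - O_\cdot(T,0))^{-1}$ then defines a bounded operator on $X$ inverting $\lambda I - \bO(T,0)$; in the Dirichlet case $X = X_0$ this inverse trivially preserves vanishing at $\partial\Omega$. Hence $\lambda$ lies in the resolvent set of $\bO(T,0)$, which gives $r(\bO(T,0)) \le R$.

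For the lower bound $r(\bO(T,0)) \ge R$, I would build approximate eigenfunctions. Assumption {\rm (M)} implies that $O_x(T,0)$ is a nonnegative matrix (cooperative systems preserve $\Real^n_+$), so by Perron--Frobenius, $r(O_{x_0}(T,0))$ is an eigenvalue of $O_{x_0}(T,0)$ with a nonnegative eigenvector $v_0$ of unit norm. For $x_0 \in \Omega$, I would pick a nonnegative cutoff $\chi_\epsilon \in C(\oOmega)$ with $\chi_\epsilon(x_0) = 1$ and support in $B_\epsilon(x_0) \cap \Omega$, and set $\phi_\epsilon(x) := \chi_\epsilon(x) v_0$. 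Then $\phi_\epsilon \in X$ with $\|\phi_\epsilon\|_X = 1$ (vanishing on $\partial\Omega$ handles the Dirichlet case), and
\[
[(\bO(T,0) - r(O_{x_0}(T,0))I)\phi_\epsilon](x) = \chi_\epsilon(x)\,[O_x(T,0) - O_{x_0}(T,0)]\,v_0,
\]
which converges to zero uniformly in $x$ as $\epsilon \to 0$ by continuity of $x \mapsto O_x(T,0)$. Thus $r(O_{x_0}(T,0))$ is an approximate eigenvalue of $\bO(T,0)$ and hence lies in $\sigma(\bO(T,0))$. For $x_0 \in \partial\Omega$, I would approach $x_0$ from the interior and invoke continuity of $x \mapsto r(O_x(T,0))$ to transfer the bound.

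The main obstacle is ensuring the approximate-eigenfunction construction respects the Dirichlet boundary condition; localizing $\chi_\epsilon$ strictly inside $\Omega$ and then passing to boundary points of $\oOmega$ via continuity resolves this. Combining the two bounds yields $r(\bO(T,0)) = R$, and Lemma \ref{lem:w_theta:equ} then delivers $\omega(\bO) = \max_{x \in \oOmega} \omega(O_x) = \max_{x \in \oOmega} \frac{\ln r(O_x(T,0))}{T}$.
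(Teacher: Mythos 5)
Your argument is correct in substance, but it follows a different route from the paper. The paper obtains the result from the stronger set identity $\sigma(\bO(T,0))=\cup_{x\in\oOmega}\sigma(O_x(T,0))$: in the Neumann/Robin case ($X=X_1$) this is quoted directly from Proposition 2.7 of the cited work on nonlocal dispersal systems, and in the Dirichlet case ($X=X_0$) the paper adapts the two steps of that proof, inserting a topological claim that $\overline{\cup_{x\in\Omega}\sigma(O_x(T,0))}=\cup_{x\in\oOmega}\sigma(O_x(T,0))$ via matrix perturbation theory; the spectral radius identity and then Lemma \ref{lem:w_theta:equ} finish the job. You instead prove only what is needed, namely $r(\bO(T,0))=\max_{x\in\oOmega}r(O_x(T,0))$, directly: the upper bound by pointwise inversion of $\lambda I-O_x(T,0)$ with a uniform bound on the compact set $\oOmega$, and the lower bound by cutoff approximate eigenfunctions built from the Perron eigenvector of $O_{x_0}(T,0)$ (available since (M) makes each $O_x(T,0)$ nonnegative), with interior localization taking care of the Dirichlet condition and continuity of $x\mapsto r(O_x(T,0))$ transferring the bound to boundary points. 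The paper's version is shorter on the page but leans on an external proposition; yours is elementary and self-contained and treats both choices of $X$ uniformly.

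One technical point to tighten in the upper bound: as written you only place real $\lambda>R$ in the resolvent set of $\bO(T,0)$, which by itself does not control the spectral radius, since $\sigma(\bO(T,0))$ may a priori contain nonreal points of modulus larger than $R$. The fix is immediate: either run the identical pointwise-inversion argument for every complex $\lambda$ with $\vert\lambda\vert>R$ (each $\lambda I-O_x(T,0)$ is then invertible with an inverse that is continuous, hence uniformly bounded, in $x\in\oOmega$), or note that $\bO(T,0)$ is a positive operator on the Banach lattice $C(\oOmega,\Real^n)$ (respectively $C_0$), so that $r(\bO(T,0))$ itself belongs to $\sigma(\bO(T,0))$ and it suffices to exclude real points above $R$. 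With that adjustment the proof is complete.
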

\begin{proof}
	In the case where $X=X_1$, it follows from \cite[Proposition 2.7]{liang2019principal} that $\sigma(\bO(T,0))= \cup_{x \in \oOmega}
	\sigma(O_x (T,0))$.
	By the perturbation theory of matrix (see, e.g., \cite[Section II.5.1]{kato1976perturbation}), $r(O_x (T,0))$ is continuous with respect to $x \in \oOmega$. This implies that $\max_{ x \in \oOmega} r(O_x (T,0))$ exists, and hence, $r(\bO(T,0)) = \max_{ x \in \oOmega} r(O_x (T,0))$. Now the desired conclusion follows from Lemma \ref{lem:w_theta:equ}.
	
	In the case where $X=X_0$, it suffices to show $\sigma(\bO(T,0))= \cup_{x \in \oOmega} \sigma(O_x (T,0))$. By arguments similar to those in 
	Step 1 of the proof of \cite[Proposition 2.7]{liang2019principal}, we have $\sigma(\bO(T,0)) \supset \cup_{x \in \Omega} \sigma(O_x (T,0))$. Since $\sigma(\bO(T,0)) $ is a closed set (see, e.g., \cite[Theorem VI.5]{reed1980methods}), it then follows that $\sigma(\bO(T,0)) \supset \overline{\cup_{x \in \Omega} \sigma(O_x (T,0))}$. 
	We further have the following claim.
	
	{\it Claim:} $ \overline{\cup_{x \in \Omega} \sigma(O_x (T,0))}=\cup_{x \in \oOmega} \sigma(O_x (T,0))$. 
	
	It is easy to see that $\overline{\cup_{x \in \Omega} \sigma(O_x (T,0))}\supset\cup_{x \in \Omega} \sigma(O_x (T,0))$. By the perturbation theory of matrix (see, e.g., \cite[Section II.5.1]{kato1976perturbation}), for any $x_0 \in \partial \Omega$, $x_n \in \Omega$ with $ x_n \rightarrow x_0$ and $\lambda_0 \in \sigma(O_{x_0}(T,0))$, there exists a sequence $\lambda_n \in \sigma(O_{x_n}(T,0))$ such that $\lambda_n \rightarrow \lambda_0$ as $ x_n \rightarrow x_0$. This implies that $\lambda_0\in \overline{\cup_{x \in \Omega} \sigma(O_x (T,0))}$, and hence, $\overline{\cup_{x \in \Omega} \sigma(O_x (T,0))}\supset\cup_{x \in \oOmega} \sigma(O_x (T,0))$. Similarly, we can prove $\cup_{x \in \oOmega} \sigma(O_x (T,0))$ is a closed set, which yields that $\overline{\cup_{x \in \Omega} \sigma(O_x (T,0))}\subset\cup_{x \in \oOmega} \sigma(O_x (T,0))$. The claim is proved.
	
	Thus, $\sigma(\bO(T,0)) \supset \cup_{x \in \oOmega} \sigma(O_x (T,0))$. Repeating the arguments in Step 2 of \cite[Proposition 2.7]{liang2019principal}, we conclude that $\sigma(\bO(T,0)) = \cup_{x \in \Omega} \sigma(O_x (T,0))$.
\end{proof}
Let  $\eta:=\max_{x \in \oOmega}\omega(O_x)$. Note that $- \omega(O_x)$ is the principal eigenvalue of $\frac{\D \bmu}{\D t}=\cM(x,t) \bmu + \lambda \bmu$ for any $x \in \oOmega$, and $-\eta= \min_{x \in \oOmega} -\omega (O_x)$.  

\begin{theorem}\label{thm:0}
	{\sc (\cite[Theorem 1.5]{bai2020asymptotic})}
	Assume that {\rm (M)} holds.
	Let $\lambda_{\bmkp}^*$ be the principal eigenvalue of the eigenvalue problem \eqref{equ:eig:main} subject to the Dirichlet or Neumann or Robin boundary condition. Then we have 
	$$\lim_{\max_{1 \leq i \leq n} \kappa_i \rightarrow 0} \lambda_{\bmkp}^* = - \eta.$$
\end{theorem}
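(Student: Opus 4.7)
The plan is to recast the theorem via the structural results already established and then prove matching upper and lower bounds for $\omega(\bU_{\bmkp})$. By Theorem \ref{thm:principal:existence}, $\lambda^*_{\bmkp} = -\omega(\bU_{\bmkp})$, where $\{\bU_{\bmkp}(t,s)\}_{t\ge s}$ is the evolution family of \eqref{equ:sys:main} on $X$; and by Proposition \ref{prop:eta}, $\eta = \omega(\bO)$ for the reaction-only evolution family $\bO$. The claim is therefore equivalent to $\lim_{\max_i \kappa_i \to 0}\omega(\bU_{\bmkp}) = \eta$, which I would split into $\limsup\omega(\bU_{\bmkp})\le\eta$ and $\liminf\omega(\bU_{\bmkp})\ge\eta$. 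A preliminary mollification of $\cM(x,t)$ in $x$ (justified by continuity of the principal eigenvalue in the entries of $\cM$) lets us assume $\cM$ is smooth in $x$, and the substitution $\cM \mapsto \cM + \epsilon\mathbf{1}$ reduces the irreducibility hypothesis away by an arbitrarily small perturbation.

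For the upper bound, my approach is to construct a pointwise super-solution. In the irreducible case, Floquet theory produces, for each $x$, a positive $T$-periodic Perron eigenfunction $\bmp(x,t) \gg 0$ satisfying $\partial_t \bmp = [\cM(x,t) - \omega(O_x)I]\bmp$, and the matrix perturbation theory invoked in Proposition \ref{prop:eta} allows one to choose $\bmp$ smooth in $x$. Setting $\bar{\bmv}(x,t) := e^{(\eta+\epsilon)t}\bmp(x,t)$, one checks
\begin{equation*}
\partial_t \bar{\bmv} - \bmkp \cL \bar{\bmv} - \cM \bar{\bmv} = [(\eta+\epsilon) - \omega(O_x)]\bar{\bmv} - e^{(\eta+\epsilon)t}\bmkp\cL\bmp,
\end{equation*}
and since $\omega(O_x) \le \eta$ uniformly while $\|\bmkp \cL \bmp\|_\infty = O(\max_i \kappa_i)$, the right-hand side is nonnegative once $\max_i \kappa_i$ is small. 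For the Dirichlet case this is already a super-solution on $\Omega$; for Neumann or Robin, $\bmp$ must first be modified in a thin boundary layer so that $\cB\bar{\bmv} \ge 0$ on $\partial\Omega$, introducing only an $o(1)$ correction to the interior estimate. Parabolic comparison then yields $\omega(\bU_{\bmkp}) \le \eta + \epsilon$ for $\bmkp$ small, and letting $\epsilon \downarrow 0$ closes the bound.

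For the lower bound, the plan is to produce a localized $T$-periodic sub-solution concentrated at a maximizing point. Pick $x_* \in \oOmega$ with $\omega(O_{x_*}) = \eta$; by continuity of $x \mapsto \omega(O_x)$ one may assume $x_* \in \Omega$. Let $\bmpi(t) \gg 0$ be the $T$-periodic Perron eigenfunction at $x_*$ with $\frac{d\bmpi}{dt} = [\cM(x_*,t) - \eta I]\bmpi$, and for $\delta > 0$ with $\overline{B_\delta(x_*)} \subset \Omega$, let $\varphi_\delta \ge 0$ be a scalar positive Dirichlet principal eigenfunction on $B_\delta(x_*)$ for a uniformly elliptic comparison operator chosen so that $|\cL_i \varphi_\delta| \le \mu_\delta \varphi_\delta$ with $\mu_\delta = O(\delta^{-2})$. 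Extend $\underline{\bmv}(x,t) := \varphi_\delta(x)\bmpi(t)$ by $0$ outside $B_\delta(x_*)$; since $\underline{\bmv}$ vanishes in a neighborhood of $\partial\Omega$ it is compatible with all three boundary operators, and the zero extension gives a distributional sub-solution. A direct computation yields
\begin{equation*}
\partial_t \underline{\bmv} - \bmkp \cL \underline{\bmv} - \cM \underline{\bmv} \le \bigl(-\eta + O(\max_i \kappa_i / \delta^2) + O(\delta)\bigr)\underline{\bmv}
\end{equation*}
on $B_\delta(x_*)$, the $O(\delta)$ term coming from $\sup_{B_\delta}|\cM(x,\cdot) - \cM(x_*,\cdot)|$. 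Setting $\alpha := -\eta + O(\max_i\kappa_i/\delta^2) + O(\delta)$ and passing to $e^{-\alpha t}\underline{\bmv}$ converts $\underline{\bmv}$ into a genuine sub-solution of \eqref{equ:sys:main}, so by $T$-periodicity of $\underline{\bmv}$ we get $e^{-\alpha T}\underline{\bmv}(\cdot,0) \le \bU_{\bmkp}(T,0)\underline{\bmv}(\cdot,0)$, and Collatz--Wielandt delivers $r(\bU_{\bmkp}(T,0))\ge e^{-\alpha T}$, i.e., $\omega(\bU_{\bmkp}) \ge -\alpha$. Choosing $\delta = (\max_i \kappa_i)^{1/3}$ sends both error terms to $0$.

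The principal obstacle is the boundary-layer correction in the upper bound: the natural ansatz $\bar{\bmv} = e^{(\eta+\epsilon)t}\bmp(x,t)$ need not satisfy $\cB \bar{\bmv} = 0$ on $\partial\Omega$ in the Neumann or Robin case, and the modification that enforces $\cB\bar{\bmv} \ge 0$ must preserve the interior super-solution inequality while producing only an $o(1)$ error uniformly as $\max_i \kappa_i \to 0$. A secondary technical issue is the strict positivity and smoothness of $\bmp(x,t)$ when $\cM$ is reducible, which I would resolve via the block-triangular reduction of Lemma \ref{lem:reducible} combined with the $\epsilon\mathbf{1}$ perturbation and the continuity of the associated growth bounds.
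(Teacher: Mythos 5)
The paper itself gives no proof of Theorem \ref{thm:0} — it is quoted from Bai and He \cite[Theorem 1.5]{bai2020asymptotic} — so your proposal is an independent reconstruction, and its sub/supersolution architecture (ODE Perron eigenfunctions as comparison profiles, plus Gelfand/Collatz--Wielandt via what amounts to Lemma \ref{lem:lower_upper}) is indeed the standard route. But two of its load-bearing steps do not go through as written. For the localized subsolution, the pointwise bound $|\cL_i\varphi_\delta|\le\mu_\delta\varphi_\delta$ with $\mu_\delta=O(\delta^{-2})$ cannot be arranged by one fixed comparison operator: $\cL_i$ is in divergence form with $x$-dependent coefficients, so $\cL_i\varphi_\delta$ contains the first-order terms $\sum_{p,q}(\partial_{x_q}a^i_{pq})\,\partial_{x_p}\varphi_\delta$, and near $\partial B_\delta(x_*)$ the Dirichlet eigenfunction vanishes linearly while its gradient does not, so $|\cL_i\varphi_\delta|/\varphi_\delta$ blows up there. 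Hence the differential inequality you need fails precisely where the zero extension is glued, and the comparison on $B_\delta\times[0,T]$ collapses. Repairing this requires a different profile (e.g.\ the principal periodic eigenfunctions of $\partial_t-\kappa_i\cL_i$ on $B_\delta$ with Dirichlet condition, whose eigenvalues are $O(\kappa_i\delta^{-2})$, together with a comparability argument between the $n$ components' profiles), which is exactly where the substantive work in \cite{bai2020asymptotic} lies; it is not an $O(\delta)$ afterthought.

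The second gap is the one you flag yourself and then defer: in the Neumann and Robin cases $\bmp(x,t)$ has no reason to satisfy $\cB\bmp\ge 0$, and without that the comparison yielding $\lambda^*_{\bmkp}\ge -\eta-\epsilon$ (Lemma \ref{lem:lower_upper}(ii)) is simply unavailable; stating that a boundary-layer modification "must preserve the interior inequality with $o(1)$ error" is the requirement, not the construction. It is fixable — e.g.\ replace $\bmp(x,t)$ by $\bmp(\pi(x),t)$ with a smooth retraction $\pi$ equal to the identity outside a collar of width $h$ and constant along the conormal direction determined by $(a^i_{pq})$ near $\partial\Omega$, choosing $h$ first by uniform continuity of $\cM$ and only then letting $\max_i\kappa_i\to 0$ — but the order of quantifiers is the whole point and must be written out; note also that Lemma \ref{lem:lower_upper} wants a $T$-periodic test function, so use $\bmp$ itself rather than $e^{(\eta+\epsilon)t}\bmp$, and your all-ones $\epsilon$-perturbation does legitimately supply the irreducibility it requires. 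Finally, the preliminary mollification should not be justified by "continuity of the principal eigenvalue in $\cM$": you would need that continuity uniformly in $\bmkp$, which upper semicontinuity of spectral radii does not give. Instead sandwich $\cM$ between smooth cooperative envelopes $\cM^-\le\cM\le\cM^+$ with $\Vert\cM^+-\cM^-\Vert$ small, use Proposition \ref{prop:eig:com} for each fixed $\bmkp$, and conclude via the (elementary, finite-dimensional) continuity of $\eta$ in the sup norm of $\cM$.
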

Next we turn to the asymptotic behavior of the principal eigenvalue with large diffusion coefficients.
It is easy to see the following observation is valid.
\begin{lemma}\label{lem:transfer}
	Assume that {\rm (M)} holds, and $\cB$ satisfies the Dirichlet or Neumann or Robin boundary condition.
	Let $\lambda^{*}$ be the principal eigenvalue of the eigenvalue problem \eqref{equ:eig:main}. Then $\lambda^* -c_0$ is the principal eigenvalue of the eigenvalue problem \eqref{equ:eig:main} with $\cM(x,t)$ replaced by $\cM(x,t) +c_0 I$, where $I$ is an identity matrix.
\end{lemma}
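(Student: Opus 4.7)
The plan is to observe that adding $c_0 I$ to the zeroth-order coefficient $\cM$ is exactly a rigid shift of the spectrum, and then verify this by substituting the principal eigenfunction into the modified eigenvalue problem. First I would let $(\lambda^*, \bmu^*)$ denote the principal eigenpair of \eqref{equ:eig:main} guaranteed by Theorem \ref{thm:principal:existence}, so $\bmu^* \in \bX_+ \setminus \{0\}$ is $T$-periodic in $t$, satisfies $\cB \bmu^* = 0$, and
\begin{equation*}
\partial_t \bmu^* = \bmkp \cL(x,t) \bmu^* + \cM(x,t) \bmu^* + \lambda^* \bmu^*.
\end{equation*}
Adding and subtracting $c_0 \bmu^*$ on the right rewrites this as
\begin{equation*}
\partial_t \bmu^* = \bmkp \cL(x,t) \bmu^* + \bigl(\cM(x,t) + c_0 I\bigr) \bmu^* + (\lambda^* - c_0) \bmu^*,
\end{equation*}
which shows that $(\lambda^* - c_0, \bmu^*)$ is an eigenpair of the modified problem with the same nonnegative $T$-periodic eigenfunction.

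Next I would check that $\lambda^* - c_0$ actually deserves the title \emph{principal}, i.e., that all other eigenvalues have strictly larger real parts (cf.\ Definition \ref{def:principal}). Since the matrix $\cM + c_0 I$ has the same off-diagonal entries as $\cM$, it remains cooperative, so hypothesis (M) holds for the modified problem and Theorem \ref{thm:principal:existence} applies. Reversing the computation shows that $\tilde\lambda$ is an eigenvalue of the modified problem with eigenfunction $\bmu$ if and only if $\tilde\lambda + c_0$ is an eigenvalue of the original problem with the same eigenfunction $\bmu$. This bijection $\tilde\lambda \mapsto \tilde\lambda + c_0$ preserves real parts up to the constant shift $c_0$; applying it to the property that $\mathrm{Re}(\lambda) > \lambda^*$ for every eigenvalue $\lambda \neq \lambda^*$ of the original problem yields $\mathrm{Re}(\tilde\lambda) > \lambda^* - c_0$ for every eigenvalue $\tilde\lambda \neq \lambda^* - c_0$ of the modified problem, completing the proof.

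There is no real obstacle here: the lemma is a routine bookkeeping consequence of the identity $\cM + c_0 I - c_0 I = \cM$ combined with the fact that adding a scalar matrix preserves cooperativity and commutes with the differential operator on the right-hand side. As an alternative formulation, one could instead use the characterization $\lambda^* = -\omega(\bU) = -T^{-1} \ln r(\bU(T,0))$ from Theorem \ref{thm:principal:existence}: the evolution family associated with $\cM + c_0 I$ is $e^{c_0(t-s)} \bU(t,s)$, whose exponential growth bound is $c_0 + \omega(\bU)$, yielding the same conclusion $\lambda^* - c_0$.
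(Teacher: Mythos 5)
Your proof is correct and is exactly the routine verification the paper leaves implicit (the paper simply asserts the lemma with "It is easy to see," relying on the fact that adding $c_0 I$ shifts every eigenvalue by $-c_0$, equivalently that the evolution family becomes $e^{c_0(t-s)}\bU(t,s)$ so $\lambda^* = -\omega(\bU)$ shifts to $\lambda^*-c_0$). Both your direct eigenpair substitution with the spectral bijection and your alternative via the growth-bound characterization of Theorem \ref{thm:principal:existence} are valid and consistent with the paper's intended argument.
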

 If (M) holds, we can choose a sufficient large $c_0>0$ such that for each $(x,t) \in \oOmega \times \Real$, $\cM(x,t)+c_0 I$ is a nonnegative matrix.
In view of Lemma \ref{lem:transfer}, without loss of generality, in the remaining part of this section we assume that
 \begin{itemize}
 	\item[(SM)] $\cM(x,t)$ is nonnegative for all $(x,t) \in \oOmega \times \Real$.
 \end{itemize}

The following result provides an elemental  tool in our proof of  Theorems \ref{thm:infty:Neumann} and \ref{thm:infty:D_R}.
\begin{lemma}\label{lem:lower_upper}
	Assume that {\rm (SM)} holds.
	Let $\lambda^*$ be the principal eigenvalue of the eigenvalue problem \eqref{equ:eig:main} subject to the Dirichlet or Neumann or Robin boundary condition. Let
	$$
	\hat{\bX}:=\{\bmu\in C^{2,1}(\oOmega \times \Real,\Real^n): \bmu(x,t)=\bmu(x,t+T)\}.
	$$ Then the following statements are valid:
	\begin{enumerate}
		\item[\rm (i)] $\lambda^* \leq \lambda_0 $ holds provided that  there are $\lambda_0$ and $\hat{\bmu} \in \hat{\bX} \cap \bX_+$ with $\hat{\bmu} \neq 0$ such that
		\begin{equation}\label{equ:lower}
		\begin{cases}
		\frac{\partial \hat{\bmu}}{\partial t}- \bmkp \cL(x,t) \hat{\bmu} - \cM(x,t) \hat{\bmu} \leq\lambda_0 \hat{\bmu}, &(x,t) \in \Omega \times \Real,\\
		\cB \hat{\bmu} =0, &(x,t) \in \partial\Omega \times \Real.
		\end{cases}
		\end{equation}
		\item[\rm (ii)] $\lambda^* \geq \lambda_0 $ holds 
		provided that there exists $x_0 \in \oOmega$ such that $\cM(x_0,t)$ is irreducible for all $t \in \Real$ and there are $\lambda_0$ and $\hat{\bmu} \in \hat{\bX} \cap \bX_+$ with $\hat{\bmu} \neq 0$ such that
		\begin{equation}\label{equ:upper}
		\begin{cases}
		\frac{\partial \hat{\bmu}}{\partial t}- \bmkp \cL(x,t) \hat{\bmu} - \cM(x,t) \hat{\bmu} \geq\lambda_0 \hat{\bmu}, &(x,t) \in \Omega \times \Real,\\
		\cB \hat{\bmu} =0, & (x,t) \in \partial\Omega \times \Real.
		\end{cases}
		\end{equation}
	\end{enumerate}
\end{lemma}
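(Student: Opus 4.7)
The strategy is to reduce the one-sided inequalities to spectral radius bounds on the period map $\bU(T,0)$ via the substitution $\bmw(x,t) := e^{-\lambda_0 t}\hat{\bmu}(x,t)$, after which Theorem \ref{thm:principal:existence} ($\lambda^* = -\frac{\ln r(\bU(T,0))}{T}$) converts $r(\bU(T,0)) \gtrless e^{-\lambda_0 T}$ into $\lambda^* \lessgtr \lambda_0$.

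For part (i), a direct calculation shows that \eqref{equ:lower} is equivalent to
$\frac{\partial \bmw}{\partial t} \leq \bmkp\cL(x,t)\bmw + \cM(x,t)\bmw$ on $\Omega\times\Real$, together with $\cB\bmw=0$ on $\partial\Omega\times\Real$, so $\bmw$ is a classical subsolution of the homogeneous system \eqref{equ:sys:main}. Assumption (SM) makes $\cM(x,t)$ cooperative and nonnegative, so the standard comparison principle for weakly coupled parabolic systems (see, e.g., \cite[Section 13]{hess1991periodic}) yields $\bU(T,0)\bmw(\cdot,0) \geq \bmw(\cdot,T)$. Using the $T$-periodicity of $\hat{\bmu}$ to write $\bmw(\cdot,0)=\hat{\bmu}(\cdot,0)$ and $\bmw(\cdot,T)=e^{-\lambda_0 T}\hat{\bmu}(\cdot,0)$, I obtain
$\bU(T,0)\hat{\bmu}(\cdot,0) \geq e^{-\lambda_0 T}\hat{\bmu}(\cdot,0)$ with $\hat{\bmu}(\cdot,0)\in X_+\setminus\{0\}$. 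A standard Krein--Rutman-type consequence for positive operators on ordered Banach spaces (the existence of a positive super-eigenvector forces the spectral radius to dominate its scale) then gives $r(\bU(T,0))\geq e^{-\lambda_0 T}$, hence $\lambda^*\leq \lambda_0$.

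Part (ii) runs along the same lines but in reverse: the substitution converts \eqref{equ:upper} into $\bU(T,0)\hat{\bmu}(\cdot,0) \leq e^{-\lambda_0 T}\hat{\bmu}(\cdot,0)$. The subtle point is that a sub-eigenvector inequality does not by itself bound $r(\bU(T,0))$ from above; one needs a strictly positive principal eigenfunction $\bm{\phi}^*$ of the adjoint operator $\bU(T,0)^*$, and then one pairs the inequality against it to conclude
$r(\bU(T,0))\langle\hat{\bmu}(\cdot,0),\bm{\phi}^*\rangle \leq e^{-\lambda_0 T}\langle\hat{\bmu}(\cdot,0),\bm{\phi}^*\rangle$, which gives $r(\bU(T,0))\leq e^{-\lambda_0 T}$ provided $\langle \hat{\bmu}(\cdot,0),\bm{\phi}^*\rangle > 0$. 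This positivity of the pairing is precisely where the irreducibility hypothesis at $x_0$ is used: combined with the diffusive coupling in $x$, the irreducibility of $\cM(x_0,t)$ for all $t$ triggers the strong maximum principle for cooperative parabolic systems, forcing the principal eigenfunction of the adjoint periodic problem to be componentwise strictly positive (on $\Omega$ in the Dirichlet case, on $\oOmega$ in the Neumann/Robin case). Because $\hat{\bmu}\in\bX_+$ is nontrivial, this strict positivity of $\bm{\phi}^*$ guarantees $\langle\hat{\bmu}(\cdot,0),\bm{\phi}^*\rangle > 0$, completing the argument.

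The main obstacle will be the positivity argument in part (ii): deducing that irreducibility of $\cM$ at a \emph{single} point $x_0\in\oOmega$ (together with cooperativity everywhere and the diffusive coupling of $\bmkp\cL$) suffices to propagate strict positivity of the adjoint principal eigenfunction to every component on the whole domain. The parabolic strong maximum principle for weakly-coupled cooperative systems as stated in the classical literature typically assumes pointwise irreducibility throughout; adapting it so that local irreducibility plus diffusion-driven spatial mixing delivers the required strict positivity is the only nonroutine ingredient. The rest (the substitution, comparison principle, and Krein--Rutman pairing) is standard.
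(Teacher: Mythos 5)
Your proposal follows essentially the same route as the paper's proof: the differential inequality \eqref{equ:lower} (resp.\ \eqref{equ:upper}) is converted, via the comparison principle for the cooperative system, into the period-map inequality $e^{\lambda_0 T}\bU(T,0)\hat{\bmu}(\cdot,0)\geq \hat{\bmu}(\cdot,0)$ (resp.\ $\leq$), and Theorem \ref{thm:principal:existence} then translates a bound on $r(e^{\lambda_0 T}\bU(T,0))$ into the claimed comparison between $\lambda^*$ and $\lambda_0$; for part (i) the paper obtains $r(e^{\lambda_0 T}\bU(T,0))\geq 1$ from Gelfand's formula together with normality of the cone, which is exactly your ``positive super-eigenvector'' observation. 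Two remarks. First, the obstacle you single out in part (ii) --- upgrading irreducibility of $\cM(x_0,\cdot)$ at a \emph{single} point, together with the diffusion, to the strict positivity needed for the Krein--Rutman comparison --- is not reproved in the paper: it is disposed of by citing \cite[Theorem 2.7]{liang2017principal}, which asserts precisely that $\bU(T,0)$ is eventually strongly positive under this local irreducibility hypothesis; combined with the compactness of $\bU(T,0)$ and the Krein--Rutman theory (equivalently, your pairing with a strictly positive eigenfunctional of the adjoint, cf.\ \cite[Section 7]{hess1991periodic}), this gives $r(e^{\lambda_0 T}\bU(T,0))\leq 1$. So no new adaptation of the strong maximum principle is required, only this reference, and your pairing argument is a legitimate way to finish once that result is in hand. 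Second, a minor point you should add: $\hat{\bmu}\neq 0$ in $\bX$ does not guarantee $\hat{\bmu}(\cdot,0)\neq 0$; as in the paper, one first shifts time to some $t_0$ with $\hat{\bmu}(\cdot,t_0)\neq 0$, which is harmless since by Lemma \ref{lem:w_theta:equ} the spectral radius of the period map does not depend on the base point.
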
 

\begin{proof}
	Without loss of generality, we only consider the case where $\cB$ satisfies the Neumann boundary condition, since the conclusions can be proved for other boundary conditions in a similar way. 

	(i) Without loss of generality, we assume that $\bm{\phi} :=\hat{\bmu}(0) \neq 0$. Otherwise, we can replace $t$ by $t+t_0$ if $ \hat{\bmu}(t_0) \neq 0$. Since $\{e^{\lambda_0(t-s) }\bU(t,s): t \geq s \}$ is the evolution family of
	$$
	\begin{cases}
	\frac{\partial \bmv}{\partial t}- \bmkp \cL(x,t) \bmv - \cM(x,t) \bmv =\lambda_0 \bmv, &x\in \Omega,~ t>0,\\
	\cB \bmv =0, &x \in \partial\Omega,~ t>0,
	\end{cases}
	$$
	it follows from \eqref{equ:lower} and the comparison principle that $e^{\lambda_0 T} \bU(T,0) \bm{\phi} \geq \bm{\phi}$. 
	Therefore, Gelfand’s formula (see, e.g., \cite[Theorem VI.6]{reed1980methods}) yields that $r (e^{\lambda_0 T} \bU(T,0)) \geq 1$. Moreover, $r (e^{\lambda^* T} \bU(T,0)) = 1$ by Theorem \ref{thm:principal:existence}. Thus, we have $\lambda_0 \geq \lambda^*$.
	
	(ii) Without loss of generality, we assume that $\bm{\phi} :=\hat{\bmu}(0) \neq 0$. Otherwise, we can replace $t$ by $t+t_0$ if $ \hat{\bmu}(t_0) \neq 0$. By \eqref{equ:upper}, we obtain $ e^{\lambda_0 T} \bU(T,0)) \bm{\phi} \leq \bm{\phi}$. Since $\cM(x_0,t)$ is irreducible for all $t \in \Real$, $\bU(T,0)$ is eventually strongly positive (see, e.g., \cite[Theorem 2.7]{liang2017principal}).  By the compactness of  $\bU(T,0)$
	and the Krein-Rutman theorem (see, e.g., \cite[Section 7]{hess1991periodic}),  it follows that $r (e^{\lambda_0 T} \bU(T,0)) \leq 1$. Thus, $\lambda^* \geq \lambda_0 $.
\end{proof}

By the standard comparison arguments, we have the following result.
\begin{proposition}\label{prop:eig:com}
	Assume that {\rm (SM)} holds.
	Let $\lambda^{\cD}$, $\lambda^{\cN}$ and $\lambda^{\cR}$ be the principal eigenvalue of the eigenvalue problem \eqref{equ:eig:main} subject to the Dirichlet, Neumann and Robin boundary conditions, repsectively.
	Let $\Omega_0 \subset \Omega$ with smooth boundary, we use $\lambda^{\cD_0}$ to denote the principal eigenvalue of the eigenvalue problem \eqref{equ:eig:main} subject to Dirichlet boundary condition with $\Omega$ replaced by $\Omega_0$. Let $ \hat{\cM}=(\hat{m}_{ij})_{n\times n}$ be a H\"{o}lder continuous $n\times n$ matrix-valued function of $(x,t) \in \overline{\Omega}\times \Real$ with $\hat{\cM}(x,t)=\hat{\cM}(x,t+T)$ and $\hat{m}_{ij}(x,t) \geq m_{ij}(x,t) \geq 0$, $1 \leq i,j \leq n$. We use $\hat{\lambda}^{\cD}$, $\hat{\lambda}^{\cN}$ and $\hat{\lambda}^{\cR}$ to denote the principal eigenvalue of the eigenvalue problem \eqref{equ:eig:main} with $\cM$ replaced by $\hat{\cM}$ subject to the Dirichlet, Neumann and Robin boundary conditions, repsectively.
	Then $ \lambda^{\cN} \leq \lambda^{\cR} \leq \lambda^{\cD} \leq \lambda^{\cD_0}$, $\hat{\lambda}^{\cD} \leq \lambda^{\cD} $, $\hat{\lambda}^{\cN} \leq \lambda^{\cN} $ and $\hat{\lambda}^{\cR} \leq \lambda^{\cR} $.
\end{proposition}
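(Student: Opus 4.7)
The plan is to derive all six inequalities from the parabolic comparison principle for cooperative systems combined with the spectral-radius characterization $r(\bU(T,0)) = e^{-\lambda^{*} T}$ provided by Theorem \ref{thm:principal:existence}, in the same spirit as the Gelfand-formula argument inside Lemma \ref{lem:lower_upper}(i). Throughout I use the positivity and compactness of the evolution families $\bU(t,s)$ generated by \eqref{equ:sys:main} on the ordered Banach space $X$.

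For the three matrix-monotonicity assertions I would fix the boundary operator and let $\bU(t,s)$ and $\hat{\bU}(t,s)$ denote the evolution families associated with $\cM$ and $\hat{\cM}$ respectively. Because $\hat{\cM}-\cM$ has nonnegative entries and both matrices are nonnegative (hence cooperative), the variation-of-constants formula
$$
\hat{\bU}(t,s)\bm{\phi}-\bU(t,s)\bm{\phi} = \int_{s}^{t} \hat{\bU}(t,\tau)\,(\hat{\cM}(\cdot,\tau)-\cM(\cdot,\tau))\,\bU(\tau,s)\bm{\phi}\,\D\tau
$$
has a nonnegative integrand when $\bm{\phi}\in X_{+}$, so $\hat{\bU}(T,0)\geq \bU(T,0)$. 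Monotonicity of the spectral radius for positive operators on an ordered Banach space with a normal cone then gives $r(\hat{\bU}(T,0))\geq r(\bU(T,0))$, and Theorem \ref{thm:principal:existence} yields $\hat{\lambda}^{\cD}\leq \lambda^{\cD}$, $\hat{\lambda}^{\cN}\leq \lambda^{\cN}$ and $\hat{\lambda}^{\cR}\leq \lambda^{\cR}$.

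For the boundary-operator comparisons I would use the principal eigenfunction of the more restrictive problem as a subsolution for the less restrictive one. Let $\bmu^{\cR}$ be the Robin principal eigenfunction and set $\bmw(x,t):=e^{-\lambda^{\cR}t}\bmu^{\cR}(x,t)$; then $\partial_{t}\bmw-\bmkp\cL\bmw-\cM\bmw = 0$ in $\Omega$, and since $\bmw\geq 0$ and $b_{i}>0$ we have $\cB^{\cN}\bmw = -b_{i}w_{i}\leq 0$ on $\partial\Omega$, so $\bmw$ is a subsolution of the Neumann problem. The parabolic comparison principle gives $\bU^{\cN}(T,0)\bmu^{\cR}(0)\geq e^{-\lambda^{\cR}T}\bmu^{\cR}(0)$, and Gelfand's formula together with Theorem \ref{thm:principal:existence} produces $\lambda^{\cN}\leq \lambda^{\cR}$. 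The same device applied to the Dirichlet eigenfunction $\bmu^{\cD}$, combined with Hopf's boundary-point lemma (which gives $\sum_{p,q} a_{pq}^{i}\partial_{x_{p}}\bmu^{\cD}_{i}\cos(\bmnu,\bmx_{q})\leq 0$ on $\partial\Omega$ since $\bmu^{\cD}$ vanishes there), delivers $\lambda^{\cR}\leq \lambda^{\cD}$.

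The inequality $\lambda^{\cD}\leq \lambda^{\cD_{0}}$ is obtained by extending the $\Omega_{0}$-Dirichlet principal eigenfunction by zero to $\Omega$ and running the same Gelfand-type argument, and this is the step I expect to be the main obstacle: the zero extension is only Lipschitz across $\partial\Omega_{0}$, so it fails the $C^{2,1}$ regularity built into the statement of Lemma \ref{lem:lower_upper}(i). I would circumvent this by mollifying the extended function inside a shrinking tubular neighbourhood of $\partial\Omega_{0}$, using Hopf's lemma on $\partial\Omega_{0}$ to show that the regularised object remains a subsolution up to an error that vanishes as the neighbourhood shrinks, and then passing to the limit in $r(\bU^{\cD}(T,0))\geq e^{-\lambda^{\cD_{0}}T}$ to conclude $\lambda^{\cD}\leq \lambda^{\cD_{0}}$.
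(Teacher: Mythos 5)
Your argument is correct, and it is exactly the ``standard comparison arguments'' that the paper invokes without writing them out: the Duhamel identity plus monotonicity of the spectral radius handles $\hat{\lambda}^{\cD}\leq\lambda^{\cD}$, $\hat{\lambda}^{\cN}\leq\lambda^{\cN}$, $\hat{\lambda}^{\cR}\leq\lambda^{\cR}$, and using the principal eigenfunction of the more restrictive boundary problem as a subsolution of the less restrictive one, combined with $r(\bU(T,0))=e^{-\lambda^{*}T}$ and the Gelfand-type step from Lemma \ref{lem:lower_upper}(i), gives $\lambda^{\cN}\leq\lambda^{\cR}\leq\lambda^{\cD}$. The only place where you take a genuinely different (and needlessly heavy) route is $\lambda^{\cD}\leq\lambda^{\cD_0}$: the mollification of the zero extension across $\partial\Omega_0$ is not needed. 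The standard trick is to avoid testing the extended function against the operator altogether: let $\bm{\phi}$ be the $\Omega_0$-Dirichlet principal eigenfunction at $t=0$, let $\tilde{\bm{\phi}}$ be its zero extension (which is continuous, so $\bU^{\cD}(t,0)\tilde{\bm{\phi}}$ is well defined), and compare $\bU^{\cD}(t,0)\tilde{\bm{\phi}}$ with $\bU^{\cD_0}(t,0)\bm{\phi}$ on the cylinder $\Omega_0\times(0,T]$ only; on the parabolic boundary the two agree at $t=0$ and the former is nonnegative on $\partial\Omega_0\times(0,T]$ while the latter vanishes there, so the comparison principle on $\Omega_0$ yields $\bU^{\cD}(T,0)\tilde{\bm{\phi}}\geq e^{-\lambda^{\cD_0}T}\bm{\phi}$ on $\oOmega_0$, hence $\bU^{\cD}(T,0)\tilde{\bm{\phi}}\geq e^{-\lambda^{\cD_0}T}\tilde{\bm{\phi}}$ on $\oOmega$, and the same Gelfand step gives $\lambda^{\cD}\leq\lambda^{\cD_0}$ with no regularity issue at the interface. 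Your mollification plan could probably be pushed through, but as written it is only a sketch (the error terms from commuting the mollifier with the variable coefficients and the boundary behaviour near $\partial\Omega$ are not controlled), so I would replace it by the direct domain comparison above.
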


\begin{lemma}\label{lem:infty:Neumann:estimate}
	Assume that {\rm (SM)} holds.
	Let $\lambda_{\bmkp}^\cN$ be the principal eigenvalue of the eigenvalue problem \eqref{equ:eig:main} subject to the Neumann boundary condition. Then 
	$$-n \overline{m} \leq \lambda_{\bmkp}^\cN \leq 0,$$
	where $\overline{m}=\max_{1 \leq i,j \leq n} \max_{(x,t) \in \oOmega \times \Real} m_{ij} (x,t)$.
\end{lemma}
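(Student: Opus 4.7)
The plan is to derive the two bounds separately, using the same constant test function $\hat{\bmu} \equiv \bm{1} := (1,\ldots,1)^T$ throughout. This $\hat{\bmu}$ lies in $\hat{\bX} \cap \bX_+$, is manifestly $T$-periodic, and satisfies the Neumann boundary operator automatically because all its spatial derivatives vanish; moreover $\partial_t \bm{1} = 0$ and $\cL(x,t)\bm{1} = 0$, so in any differential inequality involving $\bm{1}$ only the algebraic term $\cM(x,t)\bm{1}$ (respectively $\hat{\cM}(x,t)\bm{1}$) survives.

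For the upper bound $\lambda_{\bmkp}^\cN \leq 0$, I would feed $\hat{\bmu} \equiv \bm{1}$ and $\lambda_0 = 0$ into the inequality \eqref{equ:lower}. The left-hand side reduces to $-\cM(x,t)\bm{1}$, whose $i$-th entry $-\sum_{j} m_{ij}(x,t)$ is nonpositive by (SM). Lemma \ref{lem:lower_upper}(i) then delivers the claim immediately.

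For the lower bound $\lambda_{\bmkp}^\cN \geq -n\overline{m}$, I would majorize $\cM$ by the constant matrix $\hat{\cM}(x,t) \equiv \overline{m} J$, where $J$ is the $n\times n$ all-ones matrix. Entrywise $\hat{m}_{ij} = \overline{m} \geq m_{ij}(x,t) \geq 0$, so the Neumann clause of Proposition \ref{prop:eig:com} yields $\hat{\lambda}^\cN \leq \lambda_{\bmkp}^\cN$, where $\hat{\lambda}^\cN$ is the principal eigenvalue of the majorized system. Because $\overline{m} J$ has all positive entries it is irreducible at every $x_0 \in \oOmega$, so Lemma \ref{lem:lower_upper}(ii) applies. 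Plugging $\hat{\bmu} \equiv \bm{1}$ into the corresponding inequality \eqref{equ:upper} and using $\hat{\cM}\bm{1} = n\overline{m}\bm{1}$, we obtain equality with $\lambda_0 = -n\overline{m}$, which gives $\hat{\lambda}^\cN \geq -n\overline{m}$, and hence $\lambda_{\bmkp}^\cN \geq -n\overline{m}$.

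The only genuinely delicate point in this scheme is the irreducibility hypothesis required by Lemma \ref{lem:lower_upper}(ii): the original matrix $\cM$ need not be irreducible anywhere, and the purpose of the comparison against the strictly positive matrix $\overline{m} J$ is precisely to sidestep this failure while still retrieving the sharp constant $-n\overline{m}$.
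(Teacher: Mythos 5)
Your argument is correct and is essentially the paper's own proof: the paper compares $\cM$ with the constant matrices $0$ and $\overline{m}J$, whose principal eigenvalues under the Neumann condition are $0$ and $-n\overline{m}$ (constant eigenfunctions), and concludes via Proposition \ref{prop:eig:com}, while you implement the same comparison but certify the two bounds through Lemma \ref{lem:lower_upper} instead of asserting the comparison eigenvalues outright. The only (harmless) caveat is the degenerate case $\overline{m}=0$, where $\overline{m}J$ is reducible for $n\ge 2$ so Lemma \ref{lem:lower_upper}(ii) does not apply as stated---but then $\cM\equiv 0$ and $\lambda_{\bmkp}^{\cN}=0$ directly, so the claim is trivial there.
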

\begin{proof}
	Define two matrices $\overline{\cM}=(\overline{m}_{ij})_{n \times n}$ and $\underline{\cM}=(\underline{m}_{ij})_{n \times n}$ with $\overline{m}_{ij}=\overline{m}$ and $\underline{m}_{ij}=0$, $\forall 1 \leq i,j \leq n$. Clearly, $\underline{m}_{ij} \leq m_{ij} \leq \overline{m}_{ij}$, $1 \leq i,j \leq n$. It then follows that $0$ and $- n \overline{m}$ are the principal eigenvalue of the eigenvalue problem \eqref{equ:eig:main} subject to the Neumann boundary condition with $\cM$ replaced by $\underline{\cM}$ and $\overline{\cM}$, respectively. Proposition \ref{prop:eig:com} yields that $-n \overline{m} \leq \lambda_{\bmkp}^\cN \leq 0$.
\end{proof}

Let $\tilde{\cM}=(\tilde{m}_{ij})_{n\times n}$ be a matrix-valued function of $t \in \Real$ with $\tilde{m}_{ij}(t) = \frac{1}{\vert \Omega \vert} \int_{\Omega} m_{ij}(x,t) \D x$, $t \in \Real$, $1 \leq i,j \leq n$, where $\vert \Omega \vert$ is the volume of $\Omega$. It is easy to see that $\tilde{m}_{ij}(t)=\tilde{m}_{ij}(t+T)$, $t \in \Real$, $1 \leq i,j \leq n$. Let $\{\tilde{O}(t,s): t \geq s\}$ be the evolution family on $\Real^n$ of $\frac{\D \bmv}{\D t}=\tilde{\cM}(t) \bmv,~ t>0$ and write $\tilde{\eta}=\omega(\tilde{O})$. 
The subsequent observation  is inspired  by the results in  \cite{hutson2001evolution,peng2015effects}.
\begin{lemma}\label{lem:inf:irreducible}
	Assume that {\rm (SM)} holds.
	Let $\lambda_{\bmkp}^\cN$ be the principal eigenvalue of the eigenvalue problem \eqref{equ:eig:main} subject to the Neumann boundary condition. We further assume that $\tilde{O}(T,0)$ is irreducible. Then we have
	$$
	\lim_{\min_{1 \leq i \leq n} \kappa_i \rightarrow +\infty} \lambda_{\bmkp}^\cN = - \tilde{\eta}.
	$$
\end{lemma}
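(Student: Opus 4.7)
My approach is a subsequential compactness argument. By Lemma \ref{lem:infty:Neumann:estimate}, $\{\lambda_{\bmkp}^\cN\}$ is uniformly bounded, so along any sequence with $\min_i\kappa_i \to +\infty$ we can extract a convergent subsequence $\lambda_{\bmkp^{(k)}}^\cN \to \lambda^*$ (with $\min_i\kappa_i^{(k)}\to+\infty$). It suffices to show $\lambda^* = -\tilde{\eta}$ for every such limit, since the full convergence then follows by the usual ``every subsequence has a further subsequence with the same limit'' argument.

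Denote by $\bmu_k \geq 0$ the principal eigenfunction associated with $\lambda_k := \lambda_{\bmkp^{(k)}}^\cN$, normalized by $\|\bmu_k\|_{L^2(\Omega\times[0,T])} = 1$, and split $\bmu_k = \bar{\bmu}_k + \bmw_k$ with $\bar{\bmu}_k(t) := |\Omega|^{-1}\int_\Omega \bmu_k(x,t)\D x$ and $\int_\Omega \bmw_k(x,t)\D x = 0$. Subtracting the spatial average of \eqref{equ:eig:main} from the equation itself, taking the componentwise $L^2(\Omega)$ inner product with $\bmw_k$, integrating by parts (Neumann boundary terms vanish), and then integrating over one period (so the term $\frac{d}{dt}\|\bmw_k(\cdot,t)\|_{L^2(\Omega)}^2$ contributes zero by $T$-periodicity) yields, after Cauchy-Schwarz,
\[
\underline{a}\,\min_{1\leq i\leq n}\kappa_i^{(k)}\int_0^T\|\nabla\bmw_k\|_{L^2(\Omega)}^2\D t \leq C\bigl(\|\bar{\bmu}_k\|_{L^2([0,T])} + \|\bmw_k\|_{L^2}\bigr)\|\bmw_k\|_{L^2} + |\lambda_k|\|\bmw_k\|_{L^2}^2,
\]
where $\|\bmw_k\|_{L^2} := \|\bmw_k\|_{L^2(\Omega\times[0,T])}$. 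The Poincar\'e-Wirtinger inequality $\|\bmw_k(\cdot,t)\|_{L^2(\Omega)}^2 \leq C_P\|\nabla\bmw_k(\cdot,t)\|_{L^2(\Omega)}^2$ (valid since $\bmw_k$ has zero spatial mean), combined with $\|\bar{\bmu}_k\|_{L^2([0,T])} \leq |\Omega|^{-1/2}$, forces $\|\bmw_k\|_{L^2} = O((\min_i\kappa_i^{(k)})^{-1}) \to 0$. The orthogonality $1 = \|\bmu_k\|_{L^2}^2 = |\Omega|\,\|\bar{\bmu}_k\|_{L^2([0,T])}^2 + \|\bmw_k\|_{L^2}^2$ then keeps $\|\bar{\bmu}_k\|_{L^2([0,T])}$ bounded away from zero.

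Averaging \eqref{equ:eig:main} over $\Omega$ (the Neumann condition kills the diffusion) gives
\[
\bar{\bmu}_k'(t) = \tilde{\cM}(t)\bar{\bmu}_k(t) + \lambda_k\bar{\bmu}_k(t) + \frac{1}{|\Omega|}\int_\Omega\bigl(\cM(x,t)-\tilde{\cM}(t)\bigr)\bmw_k(x,t)\D x,
\]
whose last term tends to $0$ in $L^2([0,T])$. Hence $\{\bar{\bmu}_k\}$ is uniformly bounded in $H^1([0,T])$, and a further subsequence converges uniformly on $[0,T]$ to a nonnegative, $T$-periodic $\bmu^*\not\equiv 0$ solving $\bmu^{*\prime}(t) = (\tilde{\cM}(t) + \lambda^* I)\bmu^*(t)$. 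Then $\bmu^*(0)$ is a nonzero nonnegative eigenvector of the nonnegative matrix $\tilde{O}(T,0)$ with eigenvalue $e^{-\lambda^* T}$; since $\tilde{O}(T,0)$ is irreducible, the Perron-Frobenius theorem forces $e^{-\lambda^* T} = r(\tilde{O}(T,0)) = e^{\tilde{\eta}T}$ by Lemma \ref{lem:w_theta:equ}, i.e., $\lambda^* = -\tilde{\eta}$. The main obstacle is the energy estimate itself: because $\cM(x,t)$ need not be irreducible pointwise, the cleaner super/sub-solution route via Lemma \ref{lem:lower_upper}(ii) is unavailable, so the sign-indefinite reaction coupling $\int_\Omega\cM\bmu_k\cdot\bmw_k\D x$ must be absorbed by the diffusion term $-\underline{a}\min_i\kappa_i^{(k)}\|\nabla\bmw_k\|_{L^2}^2$ via Cauchy-Schwarz and Poincar\'e-Wirtinger, which crucially uses that $\bar{\bmu}_k$ remains $L^2$-bounded uniformly in $\bmkp$.
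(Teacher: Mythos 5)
Your proposal is correct and follows essentially the same route as the paper's proof: normalize the principal eigenfunction, split it into its spatial average and a zero-mean fluctuation, use the energy estimate with Poincar\'e's inequality and the uniform bound $-n\overline{m}\le\lambda_{\bmkp}^{\cN}\le 0$ to make the fluctuation vanish as $\min_{1\le i\le n}\kappa_i\to+\infty$, pass to the limit in the spatially averaged equation, and conclude via the irreducibility of $\tilde{O}(T,0)$ and the Perron--Frobenius theorem. The only differences are technical packaging: the paper argues through the variation-of-constants formula for $\tilde{\bmu}_{\bmkp}$ and separate claims that $\tilde{\bmu}_{\bmkp}(0)$ stays bounded above and away from zero, while you obtain compactness from an $H^1([0,T])$ bound on the averages and nondegeneracy from the $L^2$-orthogonality of the decomposition.
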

\begin{proof}
	Let $\bmu_{\bmkp}=(u_{\bmkp,1},u_{\bmkp,2},\cdots,u_{\bmkp,n})^T \in \bX$ be the principal eigenfunction corresponding to $\lambda_{\bmkp}^{\cN}$, where we normalize $\bmu_{\bmkp}$ by
	\begin{equation}\label{equ:normalize}
	\max_{1 \leq i \leq n}\int_{0}^{T}\int_{\Omega} u_{\bmkp,i}^2 \D x \D t =1.
	\end{equation}
	Define $\tilde{\bmu}_{\bmkp}=(\tilde{u}_{\bmkp,1},\tilde{u}_{\bmkp,2},\cdots,\tilde{u}_{\bmkp,n})^T$ and $\hat{\bmu}_{\bmkp}=(\hat{u}_{\bmkp,1},\hat{u}_{\bmkp,2},\cdots,\hat{u}_{\bmkp,n})^T$ by 
	\begin{equation}\label{equ:tilde:hat:u}
	\tilde{u}_{\bmkp,i} (t) = \frac{1}{\vert \Omega \vert} \int_{\Omega} u_{\bmkp,i}(y,t) \D y, ~\hat{u}_{\bmkp,i} (x,t) =u_{\bmkp,i}(x,t) - \tilde{u}_{\bmkp,i} (t), ~ 1 \leq i \leq n.
	\end{equation}
	Taking the average of $i$-th equation of the eigenvalue problem \eqref{equ:eig:main} over $\Omega$, we have
	$$
	\frac{\partial \tilde{u}_{\bmkp,i}}{\partial t}=  \frac{1}{\vert \Omega \vert}\sum_{j=1}^n \int_{\Omega} m_{ij} u_{\bmkp,j} \D x+ \lambda_{\bmkp}^{\cN} \frac{1}{\vert \Omega \vert}\int_{\Omega}  u_{\bmkp,i} \D x,~ 1 \leq i \leq n,
	$$
	and hence,
	$$
	\frac{\partial \tilde{u}_{\bmkp,i}}{\partial t}= \sum_{j=1}^n \tilde{m}_{ij} \tilde{u}_{\bmkp,j}
	+\frac{1}{\vert \Omega \vert} \sum_{j=1}^n \int_{\Omega} m_{ij} (u_{\bmkp,j} - \tilde{u}_{\bmkp,j}) \D x + \lambda_{\bmkp}^{\cN} \tilde{u}_{\bmkp,i}, ~1 \leq i \leq n.
	$$
	Let $\bm{f}_{\bmkp}=(f_{\bmkp,1},f_{\bmkp,2},\cdots,f_{\bmkp,n})^T$, where 
	\begin{equation}\label{equ:f:Di}
	f_{\bmkp,i}(t)=\frac{1}{\vert \Omega \vert} \sum_{j=1}^n \int_{\Omega} m_{ij} (x,t)(u_{\bmkp,j}(x,t) - \tilde{u}_{\bmkp,j}(t)) \D x,~ t \in \Real, ~1 \leq i \leq n.
	\end{equation}
	By the constant-variation formula, we obtain
	\begin{equation}\label{equ:tilde:bmu}
	\tilde{\bmu}_{\bmkp}(t)= e^{t\lambda_{\bmkp}^{\cN}}\tilde{O}(t,0) \tilde{\bmu}_{\bmkp}(0) + \int_{0}^{t} e^{(t-s)\lambda_{\bmkp}^{\cN}} \tilde{O}(t,s) \bm{f}_{\bmkp}(s) \D s, ~t \in [0,T].
	\end{equation}
	Taking $t=T$ and using the fact $\tilde{\bmu}_{\bmkp}(T)=\tilde{\bmu}_{\bmkp}(0)$, we arrive at
	$$
	\tilde{\bmu}_{\bmkp}(0)= e^{T\lambda_{\bmkp}^{\cN}}\tilde{O}(T,0) \tilde{\bmu}_{\bmkp}(0) + \int_{0}^{T} e^{(T-s)\lambda_{\bmkp}^{\cN}} \tilde{O}(T,s) \bm{f}_{\bmkp}(s) \D s.
	$$
	 For convenience, let 
	$
	\underline{\kappa}:=\min_{1 \leq i \leq n} \kappa_i$. Then we have the following claims.
	
	{\it Claim 1.}
	$
	\left\Vert \int_{0}^{t} e^{(t-s)\lambda_{\bmkp}^{\cN}} \tilde{O}(t,s) \bm{f}_{\bmkp}(s) \D s \right\Vert_{\Real^n} \rightarrow 0 
	\text{ as } \underline{\kappa} \rightarrow +\infty,\forall t \in [0,T]. 
	$
	
	{\it Claim 2.}
	$
	\liminf_{\underline{\kappa} \rightarrow +\infty} \sum_{j=1}^n \tilde{u}_{\bmkp,j}(0) >0.
	$
	
	{\it Claim 3.}
	$
	\limsup_{\underline{\kappa} \rightarrow +\infty} \max_{1 \leq j \leq n} \tilde{u}_{\bmkp,j}(0) <+\infty.
	$
	
	Let us postpone the proof of these claims, and complete the proof in a few lines. 
	In view of Lemma \ref{lem:infty:Neumann:estimate}, and Claims 2 and 3, we see that for any $\bmkp_l=(\kappa_{l,1},\kappa_{l,2},\cdots,\kappa_{l,n})^T$, there exists a sequence $\bmkp_{l_k}=(\kappa_{l_k,1},\kappa_{l_k,2},\cdots,\kappa_{l_k,n})^T$ such that $\tilde{\bmu}_{\bmkp_{l_k}}(0) \rightarrow \bm{\phi}$ and $\lambda_{\bmkp_{l_k}}^{\cN} \rightarrow \tilde{\lambda}$ as $\min_{1 \leq i \leq n} \kappa_{l_k,i} \rightarrow +\infty$ for some $\bm{\phi} \in \Real^n_+$ and $\tilde{\lambda}$ with $\bm{\phi} \neq 0$ and $-n \overline{m}\leq \tilde{\lambda}\leq 0$, where $\overline{m}:=\max_{1 \leq i,j \leq n} \max_{(x,t) \in \oOmega \times \Real} m_{ij}(x,t)$.
	It follows from Claim 1 that
	$$
	\bm{\phi}=e^{\tilde{\lambda}T}\tilde{O}(T,0) \bm{\phi}.
	$$
	Since $\tilde{O}(T,0)$ is irreducible, the Perron-Frobenius theorem (see, e.g., \cite[Theorem 4.3.1]{smith2008monotone}) implies that 
	$\tilde{\lambda}= -\frac{1}{T} \ln r (O(T,0))= - \tilde{\eta}$.
	Thus, the desired conclusion follows from the arbitrariness of $\bmkp_l$.
	
	To prove Claim 1, we proceed with three steps.
	
	{\it Step 1.} There is $C_0 >0$ dependent on $\underline{a}$, $\overline{m}$ and $n$ such that
	\begin{equation}\label{equ:nabla:u}
	\int_{0}^{T}\int_{\Omega} \vert \nabla u_{\bmkp,i} \vert^2 \D x \D t
	\leq C_0\kappa_i^{-1}.
	\end{equation}
	Choose $\underline{a} >0$ such that $\sum_{p,q=1}^{N} a_{pq}^{i} \xi_p \xi_q \geq \underline{a} \sum_{p=1}^{N} \xi_p^2$, $ \forall 1 \leq i \leq n$. We multiply the $i$-th equation of the eigenvalue problem \eqref{equ:eig:main} by $u_{\bmkp,i}$ and then integrate over $\Omega \times (0,T)$ to obtain
	$$
	\int_{0}^{T}\int_{\Omega} \kappa_i \sum_{p,q=1}^{N} a_{pq}^{i} (u_{\bmkp,i})_{x_p} (u_{\bmkp,i})_{x_q} \D x \D t
	= \sum_{j=1}^n\int_{0}^{T}\int_{\Omega} m_{ij} u_{\bmkp,j} u_{\bmkp,i} \D x \D t 
	+ \lambda_{\bmkp}^{\cN} \int_{0}^{T}\int_{\Omega} u_{\bmkp,i}^2 \D x \D t.
	$$
	We then have
	$$
	\begin{aligned}
	\underline{a} \kappa_i\int_{0}^{T}\int_{\Omega}  \vert \nabla u_{\bmkp,i} \vert^2 \D x \D t
	\leq&\overline{m} 
	\sum_{j=1}^n \left( \int_{0}^{T}\int_{\Omega} u_{\bmkp,j}^2 \D x \D t\right)^{\frac{1}{2}}
	\left(\int_{0}^{T}\int_{\Omega} u_{\bmkp,i}^2 \D x \D t\right)^{\frac{1}{2}}\\
	&+\lambda_{\bmkp}^{\cN} \int_{0}^{T}\int_{\Omega} u_{\bmkp,i}^2 \D x \D t,~ 1 \leq i \leq n.
	\end{aligned}
	$$
	This  finishes the proof of Step 1.
	
	{\it Step 2.}  For each $i =1,2,\cdots,n$, we have
	\begin{equation}\label{equ:f:bmkp:i}
			\int_{0}^{t} \vert f_{\bmkp,i} (s) \vert \D s  \rightarrow 0 \text{ as } \underline{\kappa} \rightarrow +\infty.
	\end{equation}
	
	It is easy to see that
	$
	\int_{\Omega} \hat{u}_{\bmkp,i} (x,t) \D x =0
	$, $\forall t \in \Real, ~i=1,2,\cdots, n$. By Poincar\'e's inequality (see, e.g., \cite[Theorem 5.8.1]{evans2010partial}), we have
	$$
	\int_{\Omega} \vert \hat{u}_{\bmkp,i} \vert^2 \D x \leq C_1 	\int_{\Omega} \vert \nabla \hat{u}_{\bmkp,i} \vert^2 \D x,~ \forall 1 \leq i \leq n,~ t \in \Real,
	$$
	for some $C_1$ dependent on $\Omega$. Therefore, by using of $\nabla \hat{u}_{\bmkp,i} = \nabla u_{\bmkp,i}$ and \eqref{equ:nabla:u}, we obtain 
	\begin{equation}\label{equ:u:hat:2}
	\int_{0}^{T}\int_{\Omega} \vert \hat{u}_{\bmkp,i} \vert^2 \D x \D t
	\leq C_1 C_0\kappa_i^{-1},~ 1 \leq i \leq n.
	\end{equation}
	Thus, the Caughy inequality yields
	\begin{equation}\label{equ:u:hat}
	\int_{0}^{T}\int_{\Omega} \vert \hat{u}_{\bmkp,i} \vert \D x \D t
	\leq C_2\kappa_i^{-\frac{1}{2}},~ 1 \leq i \leq n,
	\end{equation}
	for some $C_2$ dependent on $\underline{a}$, $\overline{m}$, $n$ and $\Omega$. By \eqref{equ:f:Di}, \eqref{equ:tilde:hat:u} and \eqref{equ:u:hat}, it follows that for any $t \in [0,T]$
	$$
	 \int_{0}^{t} \vert f_{\bmkp,i} (s) \vert \D s
	\leq \overline{m}n\vert \Omega \vert^{-1}  \max_{1 \leq j \leq n}	\int_{0}^{t}\int_{\Omega} \vert \hat{u}_{\bmkp,j} \vert \D x \D t \leq \overline{m}n\vert \Omega \vert^{-1}   C_2\underline{\kappa}^{-\frac{1}{2}},~ 1 \leq i \leq n.
	$$
	This shows that \eqref{equ:f:bmkp:i} holds true.
	
	{\it Step 3.} Complete of the proof of Claim 1.
	For convenience, we equip $\Real^n$ with the maximum norm $\Vert \bmr \Vert_{\Real^n}= \max_{1\leq i \leq n} \vert r_i \vert$, where $\bmr =(r_1,r_2,\cdots,r_n)^T$. For each $i=1,2,\cdots,n$, let $[e^{(t-s)\lambda_{\bmkp}^{\cN}} \tilde{O}(t,s) \bm{f}_{\bmkp}(s) ]_{i}$ be the $i$-component of $e^{(t-s)\lambda_{\bmkp}^{\cN}} \tilde{O}(t,s) \bm{f}_{\bmkp}(s) $, it then easily follows that the $i$-component of $\int_{0}^{t}e^{(t-s)\lambda_{\bmkp}^{\cN}} \tilde{O}(t,s) \bm{f}_{\bmkp}(s) \D s$ is
	$$
	\left[ \int_{0}^{t} 
	e^{(t-s)\lambda_{\bmkp}^{\cN}} \tilde{O}(t,s) \bm{f}_{\bmkp}(s) \D s \right]_{i}=  \int_{0}^{t} 
	\left[e^{(t-s)\lambda_{\bmkp}^{\cN}} \tilde{O}(t,s) \bm{f}_{\bmkp}(s) \right]_{i} \D s.
	$$
	 By \cite[Lemma 5.2]{daners1992abstract}, there exists $C_3>0$ such that 
	$$
	\left\Vert e^{(t-s)\lambda_{\bmkp}^{\cN}}  \tilde{O}(t,s) \bm{f}_{\bmkp}(s) \right\Vert_{\Real^n} \leq C_3 \left\Vert  \bm{f}_{\bmkp}(s) \right\Vert_{\Real^n},
	$$
	that is,
	$$
	\max_{1 \leq i \leq n} \left\vert [ e^{(t-s)\lambda_{\bmkp}^{\cN}}  \tilde{O}(t,s) \bm{f}_{\bmkp}(s) ]_i \right\vert
	\leq C_3 \max_{1 \leq i \leq n} \vert f_{\bmkp,i} (s) \vert,
	$$
	 uniformly for $s,t \in [0,T]$ with $s \leq t$.
	This implies that 
	$$
	\max_{1 \leq i \leq n}
	\left\vert
	\left[ \int_{0}^{t} 
	e^{(t-s)\lambda_{\bmkp}^{\cN}} \tilde{O}(t,s) \bm{f}_{\bmkp}(s) \D s 
	\right]_i
	\right\vert
	\leq C_3 \max_{1 \leq i \leq n} \int_{0}^{t} \vert f_{\bmkp,i} (s) \vert \D s.
	$$
	We complete the proof of Claim 1 due to \eqref{equ:f:bmkp:i}.
	
	Now we turn to the proof of Claim 2. Suppose that it is false, then 	$$\liminf_{\underline{\kappa}\rightarrow +\infty} \sum_{j=1}^n \tilde{u}_{\bmkp,j}(0) =0.$$
	By Claim 1 and \eqref{equ:tilde:bmu}, it follows that
	$$\liminf_{\underline{\kappa}\rightarrow +\infty} \sum_{j=1}^n \tilde{u}_{\bmkp,j}(t) =0, \text{ uniformly for } t \in [0,T],$$
	which implies that 
	$$
	\liminf_{\underline{\kappa}\rightarrow +\infty}
	\sum_{j=1}^n
	\int_{0}^{T} \int_{\Omega} \tilde{u}_{\bmkp,j}^2(t) \D x \D t
	=0. 
	$$
	Note that for each $j=1,2,\cdots, n$,
	$$
	\int_{0}^{T} \int_{\Omega} u_{\bmkp,j}^2(t) \D x \D t
	\leq
	2\left[\int_{0}^{T} \int_{\Omega} \hat{u}_{\bmkp,j}^2(t) \D x \D t
	+
	\int_{0}^{T} \int_{\Omega} \tilde{u}_{\bmkp,j}^2(t) \D x \D t \right].
	$$
	Together with \eqref{equ:u:hat:2}, we then have
	$$
	\liminf_{\underline{\kappa}\rightarrow +\infty}\max_{1 \leq j \leq n} \int_{0}^{T} \int_{\Omega} u_{\bmkp,j}^2(t) \D x \D t=0,
	$$
	which contrary to \eqref{equ:normalize}.
	
	Finally, we prove Claim 3. Assume, by contradiction, that
	$$
	\limsup_{\underline{\kappa}\rightarrow +\infty} \max_{1 \leq j \leq n} \tilde{u}_{\bmkp,j}(0) =+\infty.
	$$
	Then Claim 1 and \eqref{equ:tilde:bmu} yield
	$$
	\limsup_{\underline{\kappa}\rightarrow +\infty} \max_{1 \leq j \leq n} \tilde{u}_{\bmkp,j}(t) =+\infty, \text{ uniformly for } t \in [0,T].
	$$
	Since
	$$
	\int_{0}^{T} \tilde{u}_{\bmkp,j}^2 \D t
	=
	\int_{0}^{T} \left( \frac{1}{\vert \Omega \vert}\int_{\Omega} u_{\bmkp,j} \D x \right)^2 \D t
	\leq \frac{1}{\vert \Omega \vert}
	\int_{0}^{T} \int_{\Omega} u_{\bmkp,j}^2 \D x \D t,
	$$
	we obtain
	$$
	\limsup_{\underline{\kappa}\rightarrow +\infty} \max_{1 \leq j \leq n} \int_{0}^{T} \int_{\Omega} u_{\bmkp,j}^2 \D x \D t=+\infty,
	$$
	which contradicts \eqref{equ:normalize}.
\end{proof}

To remove the irreducibility condition on $\tilde{O}(T,0)$  in Lemma \ref{lem:inf:irreducible}, we analyze the block of $\tilde{O}(T,0)$ when it is reducible. 
\begin{lemma}\label{lem:a_i0j0}
	Assume that {\rm (SM)} holds.
	Let $A:=\tilde{O}(T,0)=(a_{ij})_{n\times n}$. If $a_{i_0j_0} =0$ for some $1 \leq i_0 \neq j_0 \leq n$, then $\tilde{m}_{i_0j_0}(t)=0$, $\forall t \in \Real$. Furthermore, $m_{i_0j_0}(x,t)=0$, $\forall (x,t) \in \oOmega \times \Real$.
\end{lemma}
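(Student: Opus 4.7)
The plan is to expand the evolution operator $\tilde O(T,0)$ in its Peano--Baker (Dyson) series and exploit entrywise positivity forced by (SM). Since $\tilde{\cM}(t)$ is a continuous, $T$-periodic, nonnegative matrix-valued function (its entries are spatial averages of the nonnegative entries of $\cM(x,t)$), the identity
\begin{equation*}
\tilde O(T,0)=I+\sum_{k=1}^{\infty}\int_{0}^{T}\!\!\int_{0}^{s_1}\!\!\cdots\!\int_{0}^{s_{k-1}}\tilde{\cM}(s_1)\tilde{\cM}(s_2)\cdots\tilde{\cM}(s_k)\,\D s_k\cdots\D s_1
\end{equation*}
represents $A=\tilde O(T,0)$ as $I$ plus a sum of nonnegative matrices (each integrand is a product of nonnegative matrices, hence nonnegative).

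For the off-diagonal pair $(i_0,j_0)$ with $i_0\neq j_0$, the $(i_0,j_0)$-entry of $I$ vanishes, so dropping all but the $k=1$ term yields
\begin{equation*}
0=a_{i_0j_0}\;\geq\;\int_{0}^{T}\tilde{m}_{i_0j_0}(s)\,\D s\;\geq\;0.
\end{equation*}
Hence $\int_0^T \tilde{m}_{i_0j_0}(s)\,\D s=0$. Because $\tilde{m}_{i_0j_0}(\cdot)$ is continuous and pointwise nonnegative, this forces $\tilde{m}_{i_0j_0}(t)=0$ for every $t\in[0,T]$, and $T$-periodicity extends the identity to all $t\in\Real$.

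Finally, unpacking the definition
\begin{equation*}
0=\tilde{m}_{i_0j_0}(t)=\frac{1}{\vert \Omega\vert}\int_{\Omega}m_{i_0j_0}(x,t)\,\D x,\qquad t\in\Real,
\end{equation*}
together with $m_{i_0j_0}(x,t)\geq 0$ (an off-diagonal entry of the nonnegative matrix $\cM(x,t)$) and continuity of $m_{i_0j_0}$ on $\oOmega\times\Real$, gives $m_{i_0j_0}(x,t)=0$ for all $(x,t)\in\oOmega\times\Real$. The only delicate step is isolating the first-order term of the Peano--Baker series as a lower bound, and this is automatic from entrywise nonnegativity of the series; everything else is continuity plus positivity of the integrand.
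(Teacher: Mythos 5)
Your proof is correct, and it takes a genuinely different route from the paper. The paper argues by contradiction through solution trajectories: assuming $\tilde{m}_{i_0j_0}(t_0)>0$ for some $t_0$, it considers the solution $\bmv^{j_0}(t)$ of $\frac{\D \bmu}{\D t}=\tilde{\cM}(t)\bmu$ with initial data $\bme^{j_0}$, notes that $v_{j_0}^{j_0}(t)>0$ throughout, concludes that $\frac{\D}{\D t}v_{i_0}^{j_0}(t_0)>0$, and then (using nonnegativity of $\tilde{\cM}$ to propagate positivity forward in time) deduces $a_{i_0j_0}=v_{i_0}^{j_0}(T)>0$, contradicting the hypothesis. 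You instead expand $\tilde{O}(T,0)$ in the Peano--Baker series and use entrywise nonnegativity of every term under (SM) to isolate the first-order term, obtaining the quantitative inequality $a_{i_0j_0}\geq\int_0^T\tilde{m}_{i_0j_0}(s)\,\D s\geq 0$, from which the vanishing of the continuous nonnegative integrand is immediate; the deduction of $m_{i_0j_0}\equiv 0$ from $\tilde{m}_{i_0j_0}\equiv 0$ is the same averaging-plus-continuity step in both arguments. Your route is direct (no contradiction) and yields an explicit lower bound on $a_{i_0j_0}$, at the cost of invoking the series representation of the evolution operator and its uniform convergence; the paper's route stays at the level of elementary ODE comparison, but leaves implicit the step from a positive derivative at $t_0$ to positivity at $t=T$, which your argument sidesteps entirely.
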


\begin{proof}
 Suppose, by contradiction, that there exists $t_0 \in [0,T]$ such that $\tilde{m}_{i_0j_0} (t_0) >0$. Let $\bme^{i}=(0,\cdots,1,\cdots,0)^T$ be a unit $n$-dimensional vector whose only $i$-th component is nonzero. For each $i=1,2,\cdots, n$, we use $\bmv^{i}(t)=(v_{1}^{i},v_{2}^{i},\cdots,v_{n}^{i})^T (t)$ to denote the solution of $\frac{\D \bmu}{\D t}=\tilde{\cM}(t) \bmu$ with initial data $\bmv^{i}(0)=\bme^{i}$. Clearly, $v_{j_0}^{j_0}(t)>0$ for all $t \in [0,T]$, which implies 
$$
\frac{\D }{\D t} v_{i_0}^{j_0}(t_0) = \sum_{j =1}^{n} \tilde{m}_{i_0j}(t_0) v_j^{j_0}(t_0)>0.
$$
Thus, we obtain $a_{i_0 j_0}=v_{i_0}^{j_0}(T)>0$, which contradicts our assumption.
\end{proof}
\begin{lemma}\label{lem:split}
	Assume that {\rm (SM)} holds.
	Write $A:=\tilde{O}(T,0)=(a_{ij})_{n\times n}$ and let
	$$
	A=
	\left(
	\begin{matrix}
	A_{11} & A_{12} & \cdots & A_{1 \tn}\\
	A_{21} & A_{22}& \cdots & A_{2 \tn}\\
	\vdots & \vdots&\ddots &\vdots\\
	A_{\tn 1} & A_{\tn 2} & \cdots & A_{\tn \tn}
	\end{matrix}
	\right),
	\text{ and }
	\tilde{\cM}=
	\left(
	\begin{matrix}
	\tilde{\cM}_{11} & \tilde{\cM}_{12} & \cdots & \tilde{\cM}_{1 \tn}\\
	\tilde{\cM}_{21} & \tilde{\cM}_{22}& \cdots & \tilde{\cM}_{2 \tn}\\
	\vdots & \vdots&\ddots &\vdots\\
	\tilde{\cM}_{\tn 1} & \tilde{\cM}_{\tn 2} & \cdots & \tilde{\cM}_{\tn \tn}
	\end{matrix}
	\right),
	$$
	where $A_{kk}$ is an $i_k \times i_k$ matrix, and $\tilde{\cM}_{kk}$ is an $i_k \times i_k$ matrix-valued function of $t \in \Real$. If $A_{kl}$ are zero matrices for all $ k =1,2,\cdots, \tn$ and $l > k$, then so are $\tilde{\cM}_{kl}(t)$ for any $ t\in \Real$. Moreover, let $\{\tilde{O}_k(t,s): t \geq s\}$ be the evolution family of $\frac{\D \bmv}{\D t}=\tilde{\cM}_{kk}(t) \bmv,~t>0$, then $\omega(\tilde{O}_{k})= \frac{\ln r(A_{kk})}{T}$.
\end{lemma}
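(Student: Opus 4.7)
The plan is to split the lemma into its two assertions and dispatch them in order.

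\textbf{Step 1 (zero upper blocks of $\tilde{\cM}$).} I would argue entry-by-entry using Lemma \ref{lem:a_i0j0}. Fix $k$ and $l>k$, and let $(i,j)$ be any index pair with $i$ in the $k$-th block and $j$ in the $l$-th block of the partition. Since $k\neq l$ the blocks are disjoint, so $i\neq j$, and the hypothesis $A_{kl}=0$ forces $a_{ij}=0$. Lemma \ref{lem:a_i0j0} then yields $\tilde m_{ij}(t)=0$ for every $t\in\Real$. Ranging over $(i,j)$ inside the $(k,l)$-block gives $\tilde{\cM}_{kl}(t)\equiv 0$, so $\tilde{\cM}(t)$ is itself block lower triangular.

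\textbf{Step 2 (identifying the diagonal blocks of $A$).} With $\tilde{\cM}(t)$ block lower triangular, I would exploit the fact that the linear ODE $\frac{\D\bmv}{\D t}=\tilde{\cM}(t)\bmv$ propagates in a triangular fashion. Partition solutions as $\bmv=(\bmv_1,\ldots,\bmv_{\tn})^T$ and pick initial data $\bmv(0)=(0,\ldots,0,\bm\xi,0,\ldots,0)^T$ with $\bm\xi\in\Real^{i_k}$ in the $k$-th slot. An induction on $j=1,\ldots,k-1$ shows $\bmv_j\equiv 0$: the block equation
\[
\frac{\D\bmv_j}{\D t}=\sum_{p\leq j}\tilde{\cM}_{jp}(t)\bmv_p
\]
reduces under the inductive hypothesis to $\frac{\D\bmv_j}{\D t}=\tilde{\cM}_{jj}(t)\bmv_j$ with $\bmv_j(0)=0$, forcing $\bmv_j\equiv 0$ by uniqueness. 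Consequently $\bmv_k$ solves $\frac{\D\bmv_k}{\D t}=\tilde{\cM}_{kk}(t)\bmv_k$ with $\bmv_k(0)=\bm\xi$, so $\bmv_k(T)=\tilde O_k(T,0)\bm\xi$. Comparing with $\tilde O(T,0)\bmv(0)=A\bmv(0)$, whose $k$-th block is $A_{kk}\bm\xi$, gives $A_{kk}=\tilde O_k(T,0)$.

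\textbf{Step 3 (exponential growth bound).} Since $\tilde O_k$ is a $T$-periodic evolution family on $\Real^{i_k}$, Lemma \ref{lem:w_theta:equ} applied to $\tilde O_k$ gives $\omega(\tilde O_k)=\frac{\ln r(\tilde O_k(T,0))}{T}=\frac{\ln r(A_{kk})}{T}$, which is the desired identity.

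\textbf{Main obstacle.} There is no real obstacle; the argument is a bookkeeping reduction to Lemma \ref{lem:a_i0j0} followed by the standard observation that a block lower triangular time-dependent linear ODE inherits the triangular structure in its Poincar\'e map. The only point that requires slight care is the induction in Step 2 ensuring that lower-indexed blocks stay identically zero so that the $k$-th block evolves purely under $\tilde{\cM}_{kk}$; this relies on uniqueness of solutions to homogeneous linear ODEs with zero initial data.
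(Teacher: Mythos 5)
Your proposal is correct and follows essentially the same route as the paper: Lemma \ref{lem:a_i0j0} applied entrywise gives the block lower triangular structure of $\tilde{\cM}$, uniqueness of solutions of the linear ODE identifies $\tilde{O}_k(T,0)=A_{kk}$ (the paper does this for $\tn=2$ with unit-vector initial data, you do it for general $\tn$ with block initial data and an induction, a purely presentational difference), and Lemma \ref{lem:w_theta:equ} converts this into $\omega(\tilde{O}_k)=\frac{\ln r(A_{kk})}{T}$.
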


\begin{proof}
	Without loss of generality, we assume that $\tn=2$ and
	$$
	A=
	\left(
	\begin{matrix}
	A_{11} &A_{12} \\
	A_{21} & A_{22}\\
	\end{matrix}
	\right),
	\text{ and }
	\tilde{\cM}=
	\left(
	\begin{matrix}
	\tilde{\cM}_{11} & \tilde{\cM}_{12} \\
	\tilde{\cM}_{21} & \tilde{\cM}_{22}\\
	\end{matrix}
	\right),
	$$
	Then Lemma \ref{lem:a_i0j0} implies that $\tilde{\cM}_{12}$ is a zero matrix.
	We next claim that
	$\tilde{O}_1 (T,0)=A_{11}$ and $\tilde{O}_2 (T,0)=A_{22}$. If the claim is true, then Lemma \ref{lem:w_theta:equ} yields that $\omega(\tilde{O}_{k})= - \frac{\ln r(A_{kk})}{T}$, $1 \leq k \leq 2$.
	
	It suffices to prove the above claim. Let $\bme^{i}=(0,\cdots,1,\cdots,0)^T$, $\bme^{i,1}=(0,\cdots,1,\cdots,0)^T$ and $\bme^{i,2}=(0,\cdots,1,\cdots,0)^T$ be three unit $n$, $i_1$ and $i_2$ dimensional vectors whose only $i$-th component is nonzero, respectively. 
	Let $\Lambda_1: =\{1,2,\cdots,i_1\}$ and $\Lambda_2:=\{i_1+1,i_1+2,\cdots,n\}$.
	We use 
	$$\bmv^{i}(t)=(v_{1}^{i},v_{2}^{i},\cdots,v_{n}^{i})^T (t), ~i \in n,$$
	$$\bmv^{i,1}(t)=(v_{1}^{i,1},v_{2}^{i,1},\cdots,v_{i_1}^{i,1})^T(t), ~i \in \Lambda_1,$$
	and 
	$$\bmv^{i-i_1,2}(t)=(v_{1}^{i-i_1,2},v_{2}^{i-i_1,2},\cdots,v_{i_2}^{i-i_1,2})^T(t), ~i \in \Lambda_2,$$
	to denote the solution of $\frac{\D \bmu}{\D t}=\tilde{\cM}(t) \bmu$, $\frac{\D \bmu}{\D t}=\tilde{\cM}_{11}(t) \bmu$ and $\frac{\D \bmu}{\D t}=\tilde{\cM}_{22}(t) \bmu$ with initial data $\bmv^{i}(0)=\bme^{i}$, $\bmv^{i,1}(0)=\bme^{i,1}$ and $\bmv^{i-i_1,2}(0)=\bme^{i-i_1,2}$, respectively. 
	It is easy to verify that 	$$\tilde{O} (T,0)= (\bmv^{1}(T),\bmv^{2}(T),\cdots,\bmv^{n}(T)),$$ 
	$$\tilde{O}_1 (T,0)= (\bmv^{1,1}(T),\bmv^{2,1}(T),\cdots,\bmv^{i_1,1}(T)),$$
	and
	$$\tilde{O}_2 (T,0)= (\bmv^{1,2}(T),\bmv^{2,2}(T),\cdots,\bmv^{i_2,2}(T)).$$ 
	By the uniqueness of solution, we obtain $v_{j}^{i,1}(T)=v_{j}^{i}(T)=a_{ji}$, $i,j \in \Lambda_1$, that is, $\tilde{O}_1 (T,0)=A_{11}$. Since $A_{12}$ is a zero matrix, we have that $v_{j}^{i}(T)=a_{ji}=0$, $i \in \Lambda_2$, $j \in \Lambda_1$, and hence, $v_{j}^{i}(t)=0$, $t\in [0,T]$, $i \in \Lambda_2$, $j \in \Lambda_1$. Therefore, the uniqueness of solution implies  that $v_{j-i_1}^{i-i_1,2}(T)=v_{j}^{i}(T)=a_{ji}$, $i,j \in \Lambda_2$, that is, $\tilde{O}_2 (T,0)=A_{22}$.
\end{proof}

The following two lemmas can be derived from Lemma \ref{lem:lower_upper} and the intermediate value theorem, respectively.
\begin{lemma}\label{lem:eig:part}
	Assume that {\rm (SM)} holds.
	For any given $\Lambda \subset \{1,2,\cdots,n\}$, let $\lambda_{\Lambda}^{\cN}$ be the principal eigenvalue of the periodic parabolic eigenvalue problem
	$$
	\begin{cases}
	\frac{\partial u_i}{\partial t}=  \cL_i (x,t)u_i + \sum_{j\in \Lambda} m_{ij}(x,t) u_j +\lambda u_i, &(x,t) \in \Omega \times \Real, ~i \in \Lambda,\\
	\cB_i u_i=0, &(x,t) \in \partial\Omega \times \Real,~ i \in \Lambda,
	\end{cases}
	$$
	where  $\cB_i$ satisfies the Neumann boundary condition.
	Let $\lambda^{\cN}$ be the principal eigenvalue of the eigenvalue problem \eqref{equ:eig:main} subject to the Neumann boundary condition. Then $\lambda^{\cN} \leq \lambda_{\Lambda}^{\cN}$.
\end{lemma}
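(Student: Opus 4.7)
The natural strategy is to apply the upper-bound half of Lemma~\ref{lem:lower_upper} to the extension-by-zero of the principal eigenfunction of the truncated system. Because $(m_{ij})_{i,j\in\Lambda}$ inherits nonnegativity and cooperativity of $\cM$ from assumption (SM), Theorem~\ref{thm:principal:existence} applied to the restricted system furnishes a $T$-periodic nonnegative eigenfunction $\bmw=(w_i)_{i\in\Lambda}$ associated with $\lambda_{\Lambda}^{\cN}$. Extend it to $\hat{\bmu}=(\hat u_1,\dots,\hat u_n)^T$ on the full index set by setting $\hat u_i:=w_i$ for $i\in\Lambda$ and $\hat u_i\equiv 0$ otherwise. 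Since the diffusion is diagonal and the trivial components are smooth a fortiori, $\hat{\bmu}\in\hat{\bX}\cap\bX_+$ is nonzero, and the Neumann condition $\cB\hat{\bmu}=0$ holds componentwise (on $\Lambda$ by the restricted problem, on the complement because $\hat u_i\equiv 0$).

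Next I would verify the differential inequality~\eqref{equ:lower} with $\lambda_0=\lambda_{\Lambda}^{\cN}$ by checking it row by row. For $i\in\Lambda$, using $\hat u_j=0$ whenever $j\notin\Lambda$, we get
\[
\sum_{j=1}^{n} m_{ij}(x,t)\hat u_j=\sum_{j\in\Lambda} m_{ij}(x,t)w_j,
\]
so the $i$-th component of the left-hand side of~\eqref{equ:lower} equals $\lambda_{\Lambda}^{\cN}w_i=\lambda_{\Lambda}^{\cN}\hat u_i$ by the defining eigenvalue equation of $\bmw$. For $i\notin\Lambda$, the left-hand side reduces to $-\sum_{j\in\Lambda}m_{ij}w_j$, which is $\leq 0=\lambda_{\Lambda}^{\cN}\hat u_i$ by (SM) together with $w_j\geq 0$. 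Both cases combined give~\eqref{equ:lower}, and Lemma~\ref{lem:lower_upper}(i) delivers $\lambda^{\cN}\leq\lambda_{\Lambda}^{\cN}$.

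I do not anticipate a substantive obstacle here: the entire argument is a clean subsolution comparison, and the favorable sign on the off-$\Lambda$ rows (coming directly from (SM) and the positivity of $\bmw$) is exactly what makes the zero extension admissible. The only item worth flagging is the consistency of the diffusion coefficients between the truncated and full problems when invoking the pointwise inequality, but this is automatic since the $i$-th principal part depends only on $u_i$ in both systems, so the truncation does not interfere with the diffusive terms at all.
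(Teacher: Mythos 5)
Your proof is correct and is exactly the argument the paper intends: the paper derives this lemma from Lemma \ref{lem:lower_upper}(i) without writing out the details, and your zero-extension of the truncated principal eigenfunction, with the sign of the off-$\Lambda$ rows controlled by (SM) and the nonnegativity of the eigenfunction, is that derivation. No gaps.
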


\begin{lemma}\label{lem:conti_limits}
	Let $g$ be a continuous function on $(a,b)$ and write $g_+=\limsup_{x \rightarrow b} g(x)$ and $g_-=\liminf_{x \rightarrow b} g(x)$. Then for any $c \in [g_-,g_+]$, there exists a sequence $x_k \in (a,b)$ such that $\lim\limits_{x_k \rightarrow b} g(x_k)=c$.
\end{lemma}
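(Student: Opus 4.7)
My plan is to prove Lemma \ref{lem:conti_limits} by combining the definitions of $g_\pm$ with the intermediate value theorem applied on short intervals approaching $b$.

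First, I would dispose of the boundary cases $c = g_+$ and $c = g_-$. These follow directly from the definitions of $\limsup$ and $\liminf$: there exist sequences $y_k \to b$ with $g(y_k) \to g_+$ and $z_k \to b$ with $g(z_k) \to g_-$, so one simply takes $x_k = y_k$ or $x_k = z_k$, respectively.

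The main case is $c \in (g_-, g_+)$. I would fix such a $c$ and construct the sequence $\{x_k\}$ inductively. Given any $\delta > 0$, pick a neighborhood of $b$ of the form $(b-\delta, b) \subset (a,b)$ (interpreting things appropriately if $b = +\infty$, in which case one uses tails of the form $(M, \infty)$). By definition of $g_+$, there exists $y \in (b-\delta, b)$ with $g(y) > c$, and by definition of $g_-$, there exists $z \in (b-\delta, b)$ with $g(z) < c$. Since $g$ is continuous on the closed interval between $y$ and $z$, the intermediate value theorem supplies a point $x \in [\min(y,z), \max(y,z)] \subset (b-\delta, b)$ with $g(x) = c$. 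Iterating this with $\delta = \delta_k \downarrow 0$ produces a sequence $x_k \to b$ with $g(x_k) = c$, which certainly satisfies $\lim_{k \to \infty} g(x_k) = c$.

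There is no real obstacle; the only mildly delicate point is handling the case where $b = +\infty$ (so that "neighborhoods of $b$" mean tails $(M,\infty)$ rather than one-sided intervals), but the argument is identical in form. Note also that one minor notational convenience is that the lemma as stated allows $g(x_k) = c$ on the nose for interior $c$, so the conclusion $\lim g(x_k) = c$ is achieved exactly, not merely in the limit.
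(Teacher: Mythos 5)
Your proof is correct and follows exactly the route the paper intends: the paper gives no written proof, merely remarking that the lemma ``can be derived from \dots the intermediate value theorem,'' and your argument (boundary cases $c=g_\pm$ from the definitions of $\limsup$/$\liminf$, interior $c$ by finding $y,z$ near $b$ with $g(y)>c>g(z)$ and applying the intermediate value theorem on shrinking neighborhoods of $b$) is precisely that sketch carried out in detail.
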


Now we are in a position to prove the main results of this section.

\begin{theorem}\label{thm:infty:Neumann}
	Assume that {\rm (M)} holds.
	Let $\lambda_{\bmkp}^\cN$ be the principal eigenvalue of the eigenvalue problem \eqref{equ:eig:main} subject to the Neumann boundary condition. Then we have
	$$
	\lim_{\min_{1 \leq i \leq n} \kappa_i \rightarrow +\infty} \lambda_{\bmkp}^\cN = - \tilde{\eta}.
	$$
\end{theorem}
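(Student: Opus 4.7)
The plan is to reduce to the setting of Lemma \ref{lem:inf:irreducible} and then remove its irreducibility hypothesis on $\tilde{O}(T,0)$. First I would invoke Lemma \ref{lem:transfer}: by choosing $c_0>0$ so large that $\cM(x,t)+c_0 I$ is entrywise nonnegative, I may assume (SM) without loss of generality, since both $\lambda_{\bmkp}^{\cN}$ and $-\tilde{\eta}$ shift by the same constant $-c_0$. If $\tilde{O}(T,0)$ is irreducible, Lemma \ref{lem:inf:irreducible} already gives the conclusion. Otherwise, Lemma \ref{lem:reducible} (the Frobenius normal form) lets me relabel indices so that $\tilde{O}(T,0)$ becomes block lower triangular with irreducible diagonal blocks $A_{11},\ldots,A_{\tn\tn}$ of sizes $i_k$, satisfying $r(\tilde{O}(T,0))=\max_k r(A_{kk})$. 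Lemma \ref{lem:a_i0j0} then guarantees $m_{ij}(x,t)\equiv 0$ whenever $i\in\Lambda_k$ and $j\in\Lambda_l$ with $l>k$, while Lemma \ref{lem:split} yields $\omega(\tilde{O}_k)=T^{-1}\ln r(A_{kk})$; combined with Lemma \ref{lem:w_theta:equ} this gives $-\tilde{\eta}=\min_k(-\omega(\tilde{O}_k))$.

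Next I would sandwich $\lambda_{\bmkp}^{\cN}$ from both sides. For the upper bound, applying Lemma \ref{lem:eig:part} with $\Lambda=\Lambda_k$ reduces attention to the restricted sub-problem whose coefficient matrix is the diagonal block $(m_{ij})_{i,j\in\Lambda_k}$ (the off-diagonal blocks vanish by the previous step) and whose spatial-average Poincar\'e map is the irreducible matrix $A_{kk}$. Lemma \ref{lem:inf:irreducible} applied to this sub-problem gives $\lambda_{\Lambda_k,\bmkp}^{\cN}\to -\omega(\tilde{O}_k)$ as $\min_i\kappa_i\to+\infty$, and since Lemma \ref{lem:eig:part} yields $\lambda_{\bmkp}^{\cN}\leq \lambda_{\Lambda_k,\bmkp}^{\cN}$, minimizing over $k$ gives $\limsup\lambda_{\bmkp}^{\cN}\leq -\tilde{\eta}$. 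For the lower bound I would introduce the perturbation $\cM_\epsilon:=\cM+\epsilon(J-I)$ with $J$ the $n\times n$ all-ones matrix and $\epsilon>0$; the perturbed matrix is still cooperative and nonnegative, and its spatial average has strictly positive off-diagonal entries, so a direct positivity check for the ODE $\frac{\D \bmv}{\D t}=\tilde{\cM}_\epsilon(t)\bmv$ shows that $\tilde{O}_\epsilon(T,0)$ has all entries strictly positive and hence is irreducible. Lemma \ref{lem:inf:irreducible} then gives $\lambda_{\bmkp,\epsilon}^{\cN}\to -\omega(\tilde{O}_\epsilon)$ as $\min_i\kappa_i\to+\infty$, Proposition \ref{prop:eig:com} gives $\lambda_{\bmkp,\epsilon}^{\cN}\leq \lambda_{\bmkp}^{\cN}$, and standard ODE continuity in parameters gives $\omega(\tilde{O}_\epsilon)\to\tilde{\eta}$ as $\epsilon\to 0^+$. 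Sending $\epsilon\to 0^+$ after taking $\liminf$ yields $\liminf\lambda_{\bmkp}^{\cN}\geq -\tilde{\eta}$, which combined with the upper bound completes the proof.

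The hard part is the lower bound: without irreducibility of $\tilde{O}(T,0)$ the Perron-Frobenius machinery driving the proof of Lemma \ref{lem:inf:irreducible} is unavailable, so one must either restore irreducibility by perturbation (and then track monotonicity via Proposition \ref{prop:eig:com} together with continuity of $\tilde{O}_\epsilon(T,0)$ in $\epsilon$) or run an induction block by block along the Frobenius normal form. I would favor the perturbation route because the bookkeeping is lighter; the only real subtlety is verifying that the perturbed time-averaged Poincar\'e map is actually irreducible, which the entrywise positivity argument above resolves. The upper bound, by contrast, is comparatively routine once the block decomposition provided by Lemmas \ref{lem:a_i0j0} and \ref{lem:split} is in hand.
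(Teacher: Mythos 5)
Your proposal is correct and follows essentially the same route as the paper: reduce to (SM) via Lemma \ref{lem:transfer}, use the Frobenius normal form with Lemmas \ref{lem:a_i0j0} and \ref{lem:split} plus Lemma \ref{lem:eig:part} and a blockwise application of Lemma \ref{lem:inf:irreducible} for the upper bound, and an entrywise perturbation restoring irreducibility of the averaged Poincar\'e map combined with Proposition \ref{prop:eig:com} for the lower bound (the paper adds $\epsilon$ to every entry rather than only off-diagonal, which is immaterial). Your $\limsup$/$\liminf$ sandwich also quietly dispenses with the paper's separate argument that the limit exists, a harmless streamlining.
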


\begin{proof}
	Without lose of generality, we assume that (SM) holds due to Lemma \ref{lem:transfer}.
	The conclusion has been proved in the case where $\tilde{O}(T,0)$ is irreducible in Lemma \ref{lem:inf:irreducible}, so we assume that $\tilde{O}(T,0)$ is reducible.
	
	We first show that $\lambda_{\infty}:=\lim_{\min_{1 \leq i \leq n} \kappa_i \rightarrow +\infty} \lambda_{\bmkp}^\cN $ exists. According to Lemma \ref{lem:infty:Neumann:estimate},
	$\lambda_+:=\limsup_{\min_{1 \leq i \leq n} \kappa_i \rightarrow +\infty} \lambda_{\bmkp}^\cN$ and $\lambda_-:=\liminf_{\min_{1 \leq i \leq n} \kappa_i \rightarrow +\infty} \lambda_{\bmkp}^\cN$ exist, and $-n \overline{m}\leq \lambda_-,\lambda_+ \leq 0$. It suffices to prove that $\lambda_-=\lambda_+$. Suppose that
	$\lambda_-< \lambda_+$, for any $\tilde{\lambda} \in [\lambda_-, \lambda_+]$,
	by Lemma \ref{lem:conti_limits} and Claims 2 and 3 in the proof of Lemma \ref{lem:inf:irreducible}, there exists a sequence $\bmkp_{l}=(\kappa_{l,1},\kappa_{l,2},\cdots,\kappa_{l,n})^T$ such that $\tilde{\bmu}_{\bmkp_{l}}(0) \rightarrow \bm{\phi}$ and $\lambda_{\bmkp_{l}}^{\cN} \rightarrow \tilde{\lambda}$ as $\min_{1 \leq i \leq n} \kappa_{l,i} \rightarrow +\infty$ for some $\bm{\phi} \in \Real^n_+$ with $\bm{\phi} \neq 0$.
	By repeating the argument in the proof of Lemma \ref{lem:inf:irreducible}, we have
	$$
	\bm{\phi}=e^{\tilde{\lambda}T}\tilde{O}(T,0) \bm{\phi}.
	$$
	This implies that $e^{-\tilde{\lambda}T}$ is an eigenvalue of $\tilde{O}(T,0)$ for any $\tilde{\lambda} \in [\lambda_-, \lambda_+]$, which is impossible.
	
	Now it suffices to show that $\lambda_{\infty}= - \tilde{\eta}$. We first prove that $\lambda_{\infty}\geq - \tilde{\eta}$. For any given $\epsilon>0$, let $\cM^{\epsilon}=(m_{ij}^{\epsilon})_{n \times n}$ and $\tilde{\cM}^{\epsilon}=(\tilde{m}_{ij}^{\epsilon})_{n \times n}$ be two continuous matrix-valued functions of $(x,t) \in \oOmega \times \Real$ and $t \in \Real$ with $m_{ij}^{\epsilon}(x,t)=m_{ij}(x,t)+\epsilon$, $\forall (x,t) \in \oOmega \times \Real$ and $\tilde{m}_{ij}^{\epsilon}(x,t)=\frac{1}{\vert \Omega \vert} \int_{\Omega} m_{ij}^{\epsilon} (x,t) \D x$, $\forall t \in \Real$. Let $\{\tilde{O}^{\epsilon}(t,s): t \geq s\}$ be the evolution family on $\Real^n$ of $\frac{\D \bmv}{\D t}=\tilde{\cM}^{\epsilon}(t) \bmv, ~t>0$ and $\tilde{\eta}^{\epsilon}:=\omega(\tilde{O}^{\epsilon})$. It is easy to see that $\tilde{\eta}^{\epsilon} \rightarrow \tilde{\eta}$ as $\epsilon \rightarrow 0$. Let $\lambda_{\bmkp,\epsilon}^\cN$ be the principal eigenvalue of the eigenvalue problem \eqref{equ:eig:main} subject to the Neumann boundary condition with $\cM$ replaced by $\cM^{\epsilon}$. Clearly, $\lambda_{\bmkp,\epsilon}^{\cN} \leq \lambda_{\bmkp}^{\cN}$, $\forall \epsilon>0$. Since
	$$-\tilde{\eta}^{\epsilon}=\lim\limits_{\min_{1 \leq i \leq n} \kappa_i \rightarrow +\infty} \lambda_{\bmkp,\epsilon}^{\cN} \leq \lim\limits_{\min_{1 \leq i \leq n} \kappa_i \rightarrow +\infty} \lambda_{\bmkp}^{\cN}=\lambda_{\infty}, \forall \epsilon >0,$$
	it follows that $-\tilde{\eta} \leq \lambda_{\infty}$.
	It remains to prove that $\lambda_{\infty} \leq - \tilde{\eta}$. By Lemma \ref{lem:reducible}, without loss of generality, we assume that 
	$$
	\tilde{O}(T,0)=
	\left(
	\begin{matrix}
	A_{11} & 0 & \cdots & 0\\
	A_{21} & A_{22}& \cdots & 0\\
	\vdots & \vdots&\ddots &\vdots\\
	A_{\tn 1} & A_{\tn 2} & \cdots & A_{\tn \tn}
	\end{matrix}
	\right),
	$$
	where $A_{kk}$ is an $i_k \times i_k$ irreducible matrix and $\sum_{k=1}^{\tn} i_k =n$. Moreover, $r(\tilde{O}(T,0))= \max_{1 \leq k \leq \tn} r(A_{kk})$. Let $\Lambda_1: = \{i \in \bZ: 1 \leq i \leq i_1\}$ and $\Lambda_k:= \{i \in \bZ: \sum_{l=1}^{k-1} i_l+1 \leq i \leq \sum_{l=1}^{k} i_l \}$, $2 \leq k \leq \tn$. We split $\cM$ and $\tilde{\cM}$ into
	$$
	\cM=
	\left(
	\begin{matrix}
	\cM_{11} & \cM_{12} & \cdots & \cM_{1 \tn}\\
	\cM_{21} & \cM_{22}& \cdots & \cM_{2 \tn}\\
	\vdots & \vdots&\ddots &\vdots\\
	\cM_{\tn 1} & \cM_{\tn 2} & \cdots & \cM_{\tn \tn}
	\end{matrix}
	\right)
	\text{ and }
	\tilde{\cM}=
	\left(
	\begin{matrix}
	\tilde{\cM}_{11} & \tilde{\cM}_{12} & \cdots & \tilde{\cM}_{1 \tn}\\
	\tilde{\cM}_{21} & \tilde{\cM}_{22}& \cdots & \tilde{\cM}_{2 \tn}\\
	\vdots & \vdots&\ddots &\vdots\\
	\tilde{\cM}_{\tn 1} & \tilde{\cM}_{\tn 2} & \cdots & \tilde{\cM}_{\tn \tn}
	\end{matrix}
	\right),
	$$
	where $\cM_{kk}$ and $\tilde{\cM}_{kk}$ are $i_k \times i_k$ continuous matrix-valued functions of $(x,t) \in \oOmega \times \Real$ and $t \in \Real$, respectively. For each $k=1,2,\cdots, \tn$, let $\{\tilde{O}_k(t,s): t \geq s\}$ be the evolution family of $\frac{\D \bmv}{\D t}=\tilde{\cM}_{kk}(t) \bmv,~t>0$ and write $\tilde{\eta}_k=\omega(\tilde{O}_{k})$. 
	Then Lemma \ref{lem:split} implies that for each $ k =1,2,\cdots, \tn$ and $l > k$, $ \tilde{\cM}_{kl}$, and hence, $ \cM_{kl}$ is a zero matrix and $\tilde{\eta}_k= \frac{\ln r(A_{kk})}{T}$. So we have $\tilde{\eta}= \frac{\ln r(\tilde{O} (T,0))}{T}=\max_{1 \leq k \leq \tn} \tilde{\eta}_k$. 
	
	For each $k=1,2,\cdots, \tn$, let $\lambda_{\bmkp,k}^{\cN}$ be the principal eigenvalue of the periodic parabolic eigenvalue problem
	$$
	\begin{cases}
	\frac{\partial u_i}{\partial t}= \kappa_i \cL_i (x,t)u_i + \sum_{j\in \Lambda_k} m_{ij}(x,t) u_j +\lambda u_i, &(x,t) \in \Omega \times \Real,~i \in \Lambda_k,\\
	\cB_i u_i=0, &(x,t) \in \partial\Omega \times \Real,~ i \in \Lambda_k.
	\end{cases}
	$$
	Here $\cB_i$ is the Neumann boundary condition given in \eqref{equ:Neumann}.
	In view of Lemma \ref{lem:eig:part}, we obtain $\lambda_{\bmkp,k}^{\cN} \geq \lambda_{\bmkp}^{\cN}$, $1 \leq k \leq \tn$.
	By Lemma \ref{lem:inf:irreducible}, it then follows that
	$$
	\lim_{\min_{1 \leq i \leq n} \kappa_i \rightarrow +\infty} \lambda_{\bmkp,k}^\cN = - \tilde{\eta}_k, ~1 \leq k \leq \tn.
	$$
	Since 
	$$
	- \tilde{\eta}_k=\lim_{\min_{1 \leq i \leq n} \kappa_i \rightarrow +\infty} \lambda_{\bmkp,k}^\cN \geq \lim_{\min_{1 \leq i \leq n} \kappa_i \rightarrow +\infty} \lambda_{\bmkp}^\cN =\lambda_{\infty}, ~1 \leq k \leq \tn,
	$$
	we conclude that $- \tilde{\eta}=\min_{1 \leq k \leq \tn} - \tilde{\eta}_k \geq \lambda_{\infty}$.
\end{proof}

If, in addition, $\cM$ can be written as a block lower triangular matrix-valued function, we have the following observations for the eigenvalue problem \eqref{equ:eig:main}.
\begin{proposition}\label{prop:eig:split}
	Assume that {\rm (SM)} holds, and $\cB$ satisfies the Dirichlet or Neumann or Robin boundary condition.
 If $\cM$ can be split into 
 $$
 \cM=
 \left(
 \begin{matrix}
 \cM_{11} & \cM_{12} & \cdots & \cM_{1 \tn}\\
 \cM_{21} & \cM_{22}& \cdots & \cM_{2 \tn}\\
 \vdots & \vdots&\ddots &\vdots\\
 \cM_{\tn 1} & \cM_{\tn 2} & \cdots & \cM_{\tn \tn}
 \end{matrix}
 \right),
 $$
 where $\cM_{kk}$ is an $i_k \times i_k$ matrix-valued function of $(x,t) \in \oOmega \times \Real$ and $ \cM_{kl} (x,t)$ is a zero matrix for all $ k =1,2,\cdots, \tn$ and $l > k$, $(x,t) \in \oOmega \times \Real$. Let $\Lambda_1:= \{i \in \bZ: 1 \leq i \leq i_1\}$ and $\Lambda_k:= \{i \in \bZ: \sum_{l=1}^{k-1} i_l+1 \leq i \leq \sum_{l=1}^{k} i_l \}$, $2 \leq k \leq \tn$. Then any eigenvalue of the eigenvalue problem \eqref{equ:eig:main} is one of the eigenvalues of the eigenvalue problem
 \begin{equation}\label{equ:eig:Gamma:k}
 	\begin{cases}
 	\frac{\partial u_i}{\partial t}= \kappa_i \cL_i (x,t)u_i + \sum_{j\in \Lambda_k} m_{ij}(x,t) u_j +\lambda u_i, &(x,t) \in \Omega \times \Real,~i \in \Lambda_k,\\
 	\cB_i u_i =0, &(x,t) \in \partial\Omega \times \Real,~ i \in \Lambda_k.
 	\end{cases}
 \end{equation}
\end{proposition}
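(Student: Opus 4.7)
The plan is to exploit the block lower triangular structure of $\cM$ to decouple the system. Given an eigenvalue $\lambda$ of \eqref{equ:eig:main} with corresponding eigenfunction $\bmu \in \bX$, $\bmu \neq 0$, I would partition $\bmu$ compatibly with $\cM$ as $\bmu = (\bmu^{(1)}, \bmu^{(2)}, \ldots, \bmu^{(\tn)})^T$, where $\bmu^{(k)}$ collects the components $u_i$ with $i \in \Lambda_k$. Because $\cM_{kl}(x,t) = 0$ for every $l > k$, the equation governing the $k$-th block reads
\begin{equation*}
\frac{\partial \bmu^{(k)}}{\partial t} = \bmkp|_{\Lambda_k}\, \cL|_{\Lambda_k}\, \bmu^{(k)} + \sum_{l=1}^{k} \cM_{kl}(x,t)\, \bmu^{(l)} + \lambda \bmu^{(k)},
\end{equation*}
so the couplings run only "downward" in the block index.

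The key step is to select the smallest index at which the eigenfunction is nontrivial. Define $k_0 := \min\{ k \in \{1,2,\ldots,\tn\} : \bmu^{(k)} \not\equiv 0\}$, which exists since $\bmu \neq 0$. By definition of $k_0$, we have $\bmu^{(l)} \equiv 0$ for all $l < k_0$. Plugging this into the equation for the $k_0$-th block, every term with $l < k_0$ drops out, and what remains is
\begin{equation*}
\frac{\partial u_i}{\partial t} = \kappa_i \cL_i(x,t) u_i + \sum_{j \in \Lambda_{k_0}} m_{ij}(x,t)\, u_j + \lambda u_i, \qquad i \in \Lambda_{k_0},
\end{equation*}
which is precisely the system \eqref{equ:eig:Gamma:k} with $k = k_0$. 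The boundary conditions $\cB_i u_i = 0$ for $i \in \Lambda_{k_0}$ are inherited directly from those of \eqref{equ:eig:main}, and the $T$-periodicity in $t$ is preserved as well. Thus $\bmu^{(k_0)} \neq 0$ is an eigenfunction of \eqref{equ:eig:Gamma:k} for $k = k_0$ with eigenvalue $\lambda$, which gives the desired conclusion.

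I do not anticipate a substantive obstacle here; the argument is a straightforward consequence of the block lower triangular structure and the definition of $k_0$. The only point requiring a small amount of care is justifying that the restriction of $\bmu$ to the block $\Lambda_{k_0}$ is a genuine eigenfunction of the reduced problem, i.e.\ that it is a nonzero element in the appropriate state space satisfying the inherited boundary data — but this is immediate from the choice of $k_0$ and from the diagonal form of both $\bmkp \cL$ and $\cB$. Consequently the proof can be presented in a few lines.
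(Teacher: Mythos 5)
Your argument is correct and is essentially the proof in the paper: both exploit the block lower triangular structure so that the first nonvanishing block of the eigenfunction solves the reduced problem \eqref{equ:eig:Gamma:k}, with the diagonal form of $\bmkp\cL$ and $\cB$ guaranteeing that the boundary conditions and periodicity are inherited. The only difference is presentational — you treat general $\tn$ directly via the minimal index $k_0$, whereas the paper reduces to $\tn=2$ and appeals to induction — which is the same underlying idea.
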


\begin{proof}
	Without loss of generality, we assume that $\tn=2$. It then follows that
	$$
	\cM=
	\left(
	\begin{matrix}
	\cM_{11} & 0 \\
	\cM_{21} & \cM_{22}
	\end{matrix}
	\right),
	$$
	where $\cM_{11}$ and $\cM_{22}$ are $i_1 \times i_1$ and $i_2 \times i_2$ matrix-valued continuous functions of $(x,t) \in \oOmega \times \Real$, respectively. Let $\mu$ and $\bmu=(u_1,u_2,\cdots,u_n)^T \in \bX$ be the eigenvalue and eigenfunction of the eigenvalue problem \eqref{equ:eig:main} as the conclusions can be derived by induction method in general. Write $\bmu^{1}=(u_1,u_2,\cdots,u_{i_1})^{T}$ and $\bmu^{2}=(u_{i_1+1},u_{i_1+2},\cdots,u_{n})^{T}$. If $\bmu^{1} \neq 0$, then $\mu$ and $\bmu^{1}$ is the eigenvalue and eigenfunction of \eqref{equ:eig:Gamma:k} in the case where $k=1$. If $\bmu^{1} = 0$, then $\mu$ and $\bmu^{2}$ is the eigenvalue and eigenfunction of \eqref{equ:eig:Gamma:k} in the case where $k=2$. 
\end{proof}

It is worthy to point out that Theorem \ref{thm:infty:Neumann} can also be proved by using Lemmas \ref{lem:a_i0j0} and \ref{lem:split}, Proposition \ref{prop:eig:split} and a slightly modified proof of the part of $\lambda_{\infty} \leq - \tilde{\eta}$. 

Next, we present our second main result of this section.

\begin{theorem}\label{thm:infty:D_R}
	Assume that {\rm (M)} holds.
	Let $\lambda_{\bmkp}^{\cD}$ and $\lambda_{\bmkp}^{\cR}$ be the principal eigenvalue of the eigenvalue problem \eqref{equ:eig:main} subject to the Dirichlet or Robin boundary condition, respectively. Then we have 
	$$
	\lim_{\min_{1 \leq i \leq n} \kappa_i \rightarrow +\infty}\lambda_{\bmkp}^{\cD} =\lim_{\min_{1 \leq i \leq n} \kappa_i \rightarrow +\infty} \lambda_{\bmkp}^{\cR} = +\infty.
	$$
\end{theorem}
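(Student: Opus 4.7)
The plan is to prove the Robin case $\lim_{\underline{\kappa}\to+\infty}\lambda_{\bmkp}^{\cR}=+\infty$ directly by an energy estimate and deduce the Dirichlet case immediately from Proposition \ref{prop:eig:com}, which already gives $\lambda_{\bmkp}^{\cR}\leq\lambda_{\bmkp}^{\cD}$. By Lemma \ref{lem:transfer} I may assume (SM). Let $\bmu_{\bmkp}=(u_{\bmkp,1},\ldots,u_{\bmkp,n})^T$ be a nonnegative, not identically zero, smooth principal eigenfunction of \eqref{equ:eig:main} with Robin boundary conditions, furnished by Theorem \ref{thm:principal:existence}; note that $\bmu_{\bmkp}(t)\not\equiv 0$ at any time by uniqueness for the parabolic Cauchy problem together with $T$-periodicity.

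First I would test the $i$-th component of the eigenvalue equation against $u_{\bmkp,i}$ and integrate over $\Omega$. Integration by parts, together with the Robin relation $\sum_{p,q=1}^{N}a_{pq}^{i}\partial_{x_p}u_{\bmkp,i}\cos(\bmnu,\bm{x}_q)=-b_i u_{\bmkp,i}$ on $\partial\Omega$, yields
\begin{equation*}
\int_{\Omega} u_{\bmkp,i}\cL_i(x,t) u_{\bmkp,i}\,\D x = -\int_{\Omega}\sum_{p,q=1}^{N} a_{pq}^{i}(\partial_{x_p}u_{\bmkp,i})(\partial_{x_q}u_{\bmkp,i})\,\D x - \int_{\partial\Omega} b_i u_{\bmkp,i}^2\,\D S.
\end{equation*}
The continuity and $T$-periodicity of $a_{pq}^{i}$ and $b_i$ produce a uniform ellipticity constant $\underline{a}>0$ and a uniform lower bound $b_{\min}:=\min_{i,(x,t)}b_i(x,t)>0$. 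Letting $\tilde{\mu}>0$ denote the (strictly positive) principal eigenvalue of the scalar Robin problem $-\underline{a}\Delta\phi=\tilde{\mu}\phi$ in $\Omega$, $\underline{a}\partial_{\bmnu}\phi+b_{\min}\phi=0$ on $\partial\Omega$, its variational characterization gives
\begin{equation*}
\int_{\Omega} u_{\bmkp,i}\cL_i(x,t) u_{\bmkp,i}\,\D x \leq -\tilde{\mu}\int_{\Omega} u_{\bmkp,i}^2\,\D x, \qquad 1\leq i\leq n.
\end{equation*}

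With $V(t):=\sum_{i=1}^{n}\int_{\Omega}u_{\bmkp,i}^2\,\D x$, $\underline{\kappa}:=\min_{1\leq i\leq n}\kappa_i$ and $\bar{m}:=\max_{i,j,(x,t)}m_{ij}(x,t)$, combining the above with the AM--GM bound $m_{ij}u_{\bmkp,i}u_{\bmkp,j}\leq\bar{m}(u_{\bmkp,i}^2+u_{\bmkp,j}^2)/2$ summed over $i,j$ yields
\begin{equation*}
V'(t) \leq \bigl(-2\underline{\kappa}\tilde{\mu}+2n\bar{m}+2\lambda_{\bmkp}^{\cR}\bigr)V(t).
\end{equation*}
Periodicity of $\bmu_{\bmkp}$ gives $V(T)=V(0)>0$, so dividing by $V$ and integrating over $[0,T]$ forces $\lambda_{\bmkp}^{\cR}\geq\underline{\kappa}\tilde{\mu}-n\bar{m}$, which tends to $+\infty$ as $\underline{\kappa}\to+\infty$. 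The Dirichlet conclusion then follows from $\lambda_{\bmkp}^{\cR}\leq\lambda_{\bmkp}^{\cD}$.

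The main technical point is producing the $i$- and $t$-uniform Robin constant $\tilde{\mu}>0$; this is precisely where the hypothesis $b_i>0$ enters. Once the uniform bounds $\underline{a}>0$ and $b_{\min}>0$ are secured by compactness, the remainder is a clean energy argument. For the purely Dirichlet case, the boundary term vanishes outright and one may instead use $\tilde{\mu}=\underline{a}\lambda_1^{\cD}(\Omega)$ with $\lambda_1^{\cD}(\Omega)$ the first Dirichlet Laplacian eigenvalue on $\Omega$, so the $b_i>0$ hypothesis is not actually needed in that sub-case.
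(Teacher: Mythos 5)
Your proposal is correct and follows essentially the same route as the paper: reduce the Dirichlet case to the Robin case via Proposition \ref{prop:eig:com}, compare against the scalar Robin eigenvalue $\tilde\mu$ (the paper's $\mu_1$) built from the uniform constants $\underline{a}$ and $\underline{b}=b_{\min}$, and run an energy estimate on the principal eigenfunction to obtain $\lambda_{\bmkp}^{\cR}\geq \tilde\mu\,\underline{\kappa}-n\overline{m}$. The only difference is bookkeeping — you integrate in space only and close the argument with a Gronwall/periodicity step for $V(t)$, while the paper integrates over $\Omega\times[0,T]$ and sums Cauchy--Schwarz bounds — which yields the same lower bound.
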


\begin{proof}
	Without loss of generality, we assume that (SM) holds due to Lemma \ref{lem:transfer}. By Proposition \ref{prop:eig:com}, we have $\lambda_{\bmkp}^{\cR} \leq \lambda_{\bmkp}^{\cD}$.	It then suffices to show
	$$
	\lim_{\min_{1 \leq i \leq n} \kappa_i \rightarrow +\infty} \lambda_{\bmkp}^{\cR} = +\infty.
	$$
	Choose $\underline{a} >0$ and $\underline{b}>0$ such that $\sum_{p,q=1}^{N} a_{pq}^{i} \xi_p \xi_q \geq \underline{a} \sum_{p=1}^{N} \xi_p^2,~ b_i \geq \underline{b}, ~ \forall 1 \leq i \leq n$.
	Let $\mu_1>0$ be the principal eigenvalue of the eigenvalue problem
	$$
	\begin{cases}
	-\underline{a} \Delta v = \lambda v, & x \in \Omega,\\
	\underline{a} \frac{\partial v}{\partial \bm{\nu}} + \underline{b} v=0, & x \in \partial \Omega.
	\end{cases}
	$$
It  then follows that (see, e.g., \cite[Theorem  2.1]{cantrell2004spatial}) 
	$$
	\mu_1= \inf_{v \in H^1(\Omega)} \frac{\underline{a} \int_{\Omega} \vert \nabla v\vert^2 \D x + \underline{b} \int_{\partial \Omega} v^2 \D x}{\int_{\Omega} v^2 \D x}, 
	$$
and hence, the following inequality holds true:
	\begin{equation}\label{equ:mu_1}
		\underline{a} \int_{\Omega} \vert\nabla v\vert^2 \D x + \underline{b} \int_{\partial \Omega} v^2 \D x \geq \mu_1 	\int_{\Omega} v^2 \D x,~ \forall v \in H^1(\Omega).
	\end{equation}
	For convenience,  we define
	$$
	\underline{\kappa}:=\min_{1 \leq i \leq n} \kappa_i \quad \text{and}\quad \overline{m}:=\max_{1 \leq i,j \leq n} \max_{(x,t) \in \oOmega \times \Real} m_{ij} (x,t).
	$$ 
	Let $\bmu_{\bmkp}=(u_{\bmkp,1},u_{\bmkp,2},\cdots,u_{\bmkp,n})^T \in \bX$ be the principal eigenfunction corresponding to $\lambda_{\bmkp}^{\cN}$.
	We multiply the $i$-th equation of \eqref{equ:eig:main} by $u_{\bmkp,i}$ and integrate over $\Omega \times [0,T]$ to obtain
	$$
		\begin{aligned}
		&  \sum_{j=1}^n \int_{0}^{T} \int_{\Omega} m_{ij}  u_{\bmkp,j} u_{\bmkp,i} \D x \D t
		+ \lambda_{\bmkp}^{\cR} \int_{0}^{T} \int_{\Omega} u_{\bmkp,i}^2 \D x \D t \\
		=&
		 \kappa_i \sum_{p,q=1}^{N} \int_{0}^{T} \int_{\Omega}  a_{pq}^{i} (u_{\bmkp,i})_{x_p} (u_{\bmkp,i})_{x_q} \D x \D t
		+ \kappa_i \int_{0}^{T} \int_{\partial\Omega} b_i u_{\bmkp,i}^2 \D x \D t\\
		\geq &  \underline{a} \kappa_i \int_{0}^{T} \int_{\Omega} \vert\nabla u_{\bmkp,i}\vert^2 \D x \D t
		+ \underline{b} \kappa_i \int_{0}^{T} \int_{\partial\Omega}  u_{\bmkp,i}^2 \D x \D t\\
		\geq &		\mu_1 \underline{\kappa} \int_{0}^{T} \int_{\Omega} u_{\bmkp,i}^2 \D x \D t,
		\end{aligned}
	$$
 	where the last inequality follows from \eqref{equ:mu_1}. On the ther hand, for any $1 \leq i \leq n$,
 	$$
 	\begin{aligned}
 	&\sum_{j=1}^n \int_{0}^{T}\int_{\Omega}  m_{ij} u_{\bmkp,j} u_{\bmkp,i} \D x \D t 
 	+ \lambda_{\bmkp}^{\cR} \int_{0}^{T} \int_{\Omega} u_{\bmkp,i}^2 \D x \D t\\
 	\leq & \overline{m} \sum_{j=1}^n  \left(\int_{0}^{T}\int_{\Omega}  u_{\bmkp,j}^2 \D x \D t\right)^{\frac{1}{2}}
 	\left(\int_{0}^{T}\int_{\Omega}  u_{\bmkp,i}^2 \D x \D t\right)^{\frac{1}{2}}
 	+ \lambda_{\bmkp}^{\cR} \int_{0}^{T} \int_{\Omega} u_{\bmkp,i}^2 \D x \D t.
 	\end{aligned}
 	$$
 	Thus, the above two inequalities give rise to 
 	\begin{equation}\label{equ:mu:kappa}
 		(\mu_1 \underline{\kappa} -  \lambda_{\bmkp}^{\cR}) \left(
 		\int_{0}^{T}   \int_{\Omega} u_{\bmkp,i}^2 \D x \right)^{\frac{1}{2}} 
 		\leq \overline{m}
 		\sum_{j=1}^n  \left(\int_{0}^{T}\int_{\Omega}  u_{\bmkp,j}^2 \D x \D t\right)^{\frac{1}{2}},~ \forall 1 \leq i \leq n.
 	\end{equation}
 	Adding all inequalities of \eqref{equ:mu:kappa} together, we have
 	$$
 	(\mu_1 \underline{\kappa} -  \lambda_{\bmkp}^{\cR}) 	\sum_{i=1}^n  \left(
 	\int_{0}^{T}   \int_{\Omega} u_{\bmkp,i}^2 \D x \right)^{\frac{1}{2}} \leq n\overline{m}
 	\sum_{i=1}^n  \left(\int_{0}^{T}\int_{\Omega}  u_{\bmkp,i}^2 \D x \D t\right)^{\frac{1}{2}},
 	$$
 	and hence, $ \lambda_{\bmkp}^{\cR} \geq \mu_1 \underline{\kappa}-n \overline{m}$, which implies the desired limiting property.
\end{proof}

\section{The basic reproduction ratio}\label{sec:R0}
In this section, we study the asymptotic behavior of the basic reproduction ratio as the diffusion coefficients go to zero and infinity, respectively. 

We use the same notations $\Omega$, $n$, $X$, $X_+$, $\bX$, $\bX_+$, $\bmkp$, $\cL$ and $\cB$ as in the last section. Let $(\cX, \rho)$ be a metric space with metric $\rho$.
For any given $\chi \in \cX$, let $\cV_{\chi}=(v_{ij,\chi})_{n\times n}$ and $\cF_{\chi}=(f_{ij,\chi})_{n\times n}$ be two families of H\"{o}lder continuous $n\times n$ matrix-valued functions of $(x,t) \in \overline{\Omega}\times \Real$ with $\cV_{\chi}(x,t)=\cV_{\chi}(x,t+T)$ and $\cF_{\chi}(x,t)=\cF_{\chi}(x,t+T)$. Let  $\tilde{\cV}_{\chi}=(\tilde{v}_{ij,\chi})_{n\times n}$ and $\tilde{\cF}_{\chi}=(\tilde{f}_{ij,\chi})_{n\times n}$ with $\tilde{v}_{ij,\chi}(t)= \frac{1}{\vert \Omega \vert} \int_{\Omega} v_{ij,\chi} (x,t) \D x$ and $\tilde{f}_{ij,\chi}(t)= \frac{1}{\vert \Omega \vert} \int_{\Omega} f_{ij,\chi} (x,t) \D x$, $\forall t \in \Real$.
We use $\{\Phi_{\chi} (t,s): t \geq s\}$, $\{\tilde{\Phi}_{\chi} (t,s): t \geq s\}$ and $\{\Phi_{\bmkp,\chi}(t,s): t \geq s \}$ to denote the evolution families on $X$ of 
\begin{equation}\label{equ:Phi_0}
	\frac{\partial \bmv}{\partial t}= - \cV_{\chi}(x,t) \bmv,~ x \in \oOmega,~t>0,
\end{equation}
\begin{equation}\label{equ:Phi_infty}
\frac{\partial \bmv}{\partial t}= - \tilde{\cV}_{\chi}(t) \bmv, ~x \in \oOmega,~t>0,
\end{equation}
and
\begin{equation}\label{equ:Phi_kappa}
\begin{cases}
\frac{\partial \bmv}{\partial t}= \bmkp \cL(x,t) \bmv - \cV_{\chi}(x,t) \bmv, &x \in \Omega, ~t>0,\\
\cB \bmv =0, & x \in \partial\Omega,~t>0,
\end{cases}
\end{equation}
respectively.
For any $x\in \oOmega$, let $\{\Gamma_{x,\chi}(t,s): t \geq s \}$ be the evolution family on $\Real^n$ of \eqref{equ:Phi_0}. Let  $\{\tilde{\Gamma}_{\chi}(t,s): t \geq s \}$ be the evolution family on $\Real^n$ of  \eqref{equ:Phi_infty}.
We assume that
\begin{enumerate}
	\item[(F)]  For any $\chi \in \cX$, $\cF_{\chi}(x,t)$ is nonnegative for all $(x,t) \in \oOmega \times \Real $.
	\item[(V)] For any $\chi \in \cX$, $-\cV_{\chi}(x,t)$ is cooperative for all $(x,t) \in \oOmega \times \Real$,   $\omega (\Gamma_{x,\chi})<0$ for all $x \in \oOmega$, and $\omega(\tilde{\Gamma}_{\chi})<0$.  Moreover, for any $\chi \in \cX$ and $\bmkp$ with all $\kappa_i >0$, $\omega (\Phi_{\bmkp,\chi})<0$.
\end{enumerate}

It follows from Proposition \ref{prop:eta} that $\omega(\Phi_{\chi})=\max_{ x \in \oOmega} \omega(\Gamma_{x,\chi})<0$ and $\omega(\tilde{\Phi}_{\chi})=\omega(\tilde{\Gamma}_{\chi})<0$. 
For any $\mu >0$, let $\{\bU_{\chi}^{\mu}(t,s): t \geq s \} $, $\{\tilde{\bU}_{\chi}^{\mu}(t,s): t \geq s \} $ and $\{\bU_{\bmkp,\chi}^{\mu}(t,s): t \geq s \} $ be the evolution families on $X$ of 
\begin{equation}\label{equ:U_0}
\frac{\partial \bmv}{\partial t}= - \cV_{\chi}(x,t) \bmv +\frac{1}{\mu} \cF_{\chi} (x,t) \bmv,~ x \in \oOmega,~ t>0,
\end{equation}
\begin{equation}\label{equ:U_infty}
\frac{\partial \bmv}{\partial t}= - \tilde{\cV}_{\chi}(t) \bmv +\frac{1}{\mu} \tilde{\cF}_{\chi} (t) \bmv, ~ x \in \oOmega,~t>0,
\end{equation}
and
\begin{equation}\label{equ:U_kappa}
\begin{cases}
\frac{\partial \bmv}{\partial t}= \bmkp \cL(x,t) \bmv - \cV_{\chi}(x,t) \bmv+ \frac{1}{\mu} \cF_{\chi} (x,t) \bmv, &x \in \Omega, ~t>0,\\
\cB \bmv =0, & x \in \partial\Omega,~t>0.
\end{cases}
\end{equation}
For any $\mu >0$ and $x \in \oOmega$, let $\{U_{x,\chi}^{\mu}(t,s): t\geq s\}$ be the evolution family on $\Real^n$ of \eqref{equ:U_0}.
For any $\mu >0$, let $\{\tilde{U}_{\chi}^{\mu}(t,s): t\geq s\}$ be the evolution family on $\Real^n$ of \eqref{equ:U_infty}.
We define a family of bounded linear positive operators $\tilde{\bF}_{\chi}(t)$
on $X$ by
$$
[\tilde{\bF}_{\chi}(t)\bm{\phi}](x):=\tilde{\cF}_{\chi}(t)[\bm{\phi}(x)], ~\forall (x,t) \in \oOmega \times \Real,~ \bm{\phi} \in X, 
$$
and  introduce three bounded linear positive operators $\bQ_{\chi} : \bX \rightarrow \bX$, $\tilde{\bQ}_{\chi} : \bX \rightarrow \bX$ and $\bQ_{\bmkp,\chi}: \bX \rightarrow \bX$ by
$$
[\bQ_{\chi}\bmu](t):= \int_{0}^{+\infty} \Phi_{\chi}(t,t-s)\cF_{\chi}(\cdot,t-s)\bmu(t-s) \D s, ~t \in \Real, ~\bmu \in \bX,
$$
$$
[\tilde{\bQ}_{\chi} \bmu] (t):= \int_{0}^{+\infty} \tilde{\Phi}_{\chi}(t,t-s)\tilde{\bF}_{\chi}(t-s) \bmu(t-s) \D s, ~ t \in \Real,~ \bmu \in \bX,
$$
and
$$
[\bQ_{\bmkp,\chi} \bmu](t): = \int_{0}^{+\infty} \Phi_{\bmkp,\chi}(t,t-s)\cF_{\chi}(\cdot,t-s)\bmu(t-s) \D s, ~ t \in \Real,~ \bmu \in \bX.
$$
Define $\R(\chi):= r(\bQ_{\chi})$, $\tilde{\cR}_0(\chi):=r(\tilde{\bQ}_{\chi})$ and $\R(\bmkp,\chi):=r(\bQ_{\bmkp,\chi})$.
Let
$$
\bP:=\{ \bmu \in C(\Real,\Real^n): \bmu(t)=\bmu(t+T), ~t \in \Real \}
$$
be a Banach space with the positive cone
$$
\bP_+:=\{ \bmu \in C(\Real,\Real_+^n): \bmu(t)=\bmu(t+T),~t \in \Real\}
$$
and the maximum norm $\Vert \bmu \Vert_{\bP} = \max_{1 \leq i \leq n}\max_{0 \leq t \leq T} \vert u_i (t) \vert$, where $\bmu=(u_1,u_2,\cdots,u_n)^T$. 
For each $x \in \oOmega$, we introduce a bounded linear positive operator $Q_{x,\chi} : \bP \rightarrow \bP$ by
$$
[Q_{x,\chi} \bmu] (t):=\int_{0}^{+\infty} \Gamma_{x,\chi}(t,t-s) \cF_{\chi}(x,t-s) \bmu(t-s) \D s, ~t \in \Real,~ \bmu \in \bP.
$$
Define a bounded linear positive operator $\tilde{Q}_{\chi} : \bP \rightarrow \bP$ by 
$$
[\tilde{Q}_{\chi} \bmu] (t):= \int_{0}^{+\infty} \tilde{\Gamma}_{\chi}(t,t-s)\tilde{\cF}_{\chi}(t-s)\bmu(t-s) \D s, ~ t \in \Real, ~\bmu \in \bP.
$$
Let $R_0(x,\chi):= r(Q_{x,\chi})$, $\forall x \in \oOmega$ and $\tilde{R}_{0}(\chi):=r(\tilde{Q}_{\chi})$.

\begin{lemma}\label{lem:A1A2}
	Assume that {\rm (F)} and {\rm (V)} hold, and $\cB$ satisfies the Dirichlet or Neumann or Robin boundary condition. Then {\rm (A1)} and {\rm (A2)} are satisfied, and hence, Theorem \ref{thm:R0:periodic} holds true,
	respectively,  for the following cases:
	\begin{itemize}
		\item[\rm (i)] $Y=X$, $\Phi=\Phi_{\bmkp,\chi}$, $F(t)=\cF_{\chi}(\cdot,t)$, $\R=\R(\bmkp,\chi)$, $\Psi_{\mu}=\bU_{\bmkp,\chi}^{\mu}$ with $\chi \in \cX$, $\kappa_i>0$ and $\mu>0$.
		\item[\rm (ii)] $Y=X$, $\Phi=\Phi_{\chi}$, $F(t)=\cF_{\chi}(\cdot,t)$, $\R=\R(\chi)$, $\Psi_{\mu}=\bU_{\chi}^{\mu}$ with $\chi \in \cX$ and $\mu>0$.
		\item[\rm (iii)]$Y=X$, $\Phi=\tilde{\Phi}_{\chi}$, $F(t)=\tilde{\bF}_{\chi}(t)$, $\R=\tilde{\cR}_0(\chi)$, $\Psi_{\mu}=\tilde{\bU}_{\chi}^{\mu}$ with $\chi \in \cX$ and $\mu>0$.
		\item[\rm (iv)] $Y=\Real^n$, $\Phi=\Gamma_{x,\chi}$, $F(t)=\cF_{\chi}(x,t)$, $\R=R_{0}(x,\chi)$, $\Psi_{\mu}=U_{x,\chi}^{\mu}$ with $\chi \in \cX$ and $\mu>0$ for all $x \in \oOmega$.
		\item[\rm (v)]$Y=\Real^n$, $\Phi=\tilde{\Gamma}_{\chi}$, $F(t)=\tilde{\cF}_{\chi}(t)$, $\R=\tilde{R}_0(\chi)$, $\Psi_{\mu}=\tilde{U}_{\chi}^{\mu}$ with $\chi \in \cX$ and  $\mu>0$.
	\end{itemize}
\end{lemma}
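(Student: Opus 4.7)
The plan is to verify conditions (A1) and (A2) for each of the five cases; once both hold, Theorem \ref{thm:R0:periodic} applies verbatim. The verification splits into two independent checks: strong continuity and positivity of the multiplication operator $F(t)$, and positivity together with a negative exponential growth bound for the evolution family $\Phi$. In every one of the five cases the structure is the same, so I would treat them in parallel and only comment on where the finite-dimensional / infinite-dimensional split and the spatially-averaged / non-averaged split make a difference.

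For (A1), the operator $F(t)$ is in every case multiplication by a H\"older continuous $T$-periodic matrix-valued function: $\cF_\chi(\cdot,t)$ in (i)--(ii), $\tilde{\bF}_\chi(t)$ (i.e.\ pointwise multiplication by $\tilde{\cF}_\chi(t)$) in (iii), $\cF_\chi(x,t)$ in (iv), and $\tilde{\cF}_\chi(t)$ in (v). Positivity is immediate from assumption (F) since $\cF_\chi$ has nonnegative entries and the spatial averages $\tilde{\cF}_\chi$ inherit this property. Strong continuity of $F(t)$ in $t$ reduces to joint continuity of the defining matrix in $(x,t)$ uniformly on $\oOmega$, which is built into the H\"older regularity and the fact that the averaging $v_{ij,\chi}\mapsto \tilde v_{ij,\chi}$ preserves continuity.

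For (A2), positivity of each evolution family follows from assumption (V): $-\cV_\chi$ and $-\tilde{\cV}_\chi$ are cooperative, so the underlying linear systems (either parabolic with Dirichlet/Neumann/Robin boundary conditions or ODE systems in $\Real^n$ or ODEs lifted pointwise to $X$) admit the standard comparison/maximum principle, which yields the positivity of $\Phi_{\bmkp,\chi}$, $\Phi_\chi$, $\tilde{\Phi}_\chi$, $\Gamma_{x,\chi}$ and $\tilde{\Gamma}_\chi$. The negative growth bound is where the different cases diverge: in (i), (iv), (v) one uses directly $\omega(\Phi_{\bmkp,\chi})<0$, $\omega(\Gamma_{x,\chi})<0$, $\omega(\tilde{\Gamma}_\chi)<0$ stated in (V); in (ii) one invokes Proposition \ref{prop:eta} with $\cM=-\cV_\chi$ to get $\omega(\Phi_\chi)=\max_{x\in\oOmega}\omega(\Gamma_{x,\chi})<0$; and in (iii) one observes that $\tilde{\Phi}_\chi$ is the spatially constant lift of $\tilde{\Gamma}_\chi$, so $\omega(\tilde{\Phi}_\chi)=\omega(\tilde{\Gamma}_\chi)<0$. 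There is no substantive obstacle here: the lemma is essentially a packaging statement that funnels (F), (V), Proposition \ref{prop:eta}, and the standard positivity theory for cooperative parabolic and ODE systems into the abstract setting of Theorem \ref{thm:R0:periodic}; the only mildly delicate point worth spelling out explicitly is the applicability of Proposition \ref{prop:eta} in case (ii), where one must check that the sign convention and the cooperativity hypothesis match.
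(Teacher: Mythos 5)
Your verification is correct and follows essentially the same route the paper intends: the paper omits an explicit proof of this lemma as routine, having already recorded immediately before it that Proposition \ref{prop:eta} gives $\omega(\Phi_{\chi})=\max_{x\in\oOmega}\omega(\Gamma_{x,\chi})<0$ and $\omega(\tilde{\Phi}_{\chi})=\omega(\tilde{\Gamma}_{\chi})<0$, with the remaining growth bounds taken directly from (V) and positivity/strong continuity coming from (F), cooperativity of $-\cV_{\chi}$, and the standard comparison theory, exactly as you argue. No gaps.
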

We remark that the conclusions of Theorem \ref{thm:R0:periodic} for the case (i), (ii) and (iii), and (iv) and (v) in Lemma \ref{lem:A1A2} can also be derived 
from \cite[Theorems 3.7 and 3.8 and Proposition 3.9 with $\tau=0$]{liang2019basic}, \cite[Proposition 3.6(ii)]{liang2019principal} and Lemma \ref{lem:R0:equiv}, and \cite[Theorems 2.1 and 2.2]{wang2008threshold}, respectively.
\begin{lemma}\label{lem:0_infty}
	Assume that {\rm (F)} and {\rm (V)} hold. Then the following statements are valid for any $\chi \in \cX$:
	\begin{itemize}
		\item[\rm (i)] 	$\R(\chi)=\max_{ x \in \oOmega} R_0 (x,\chi)$.
		\item[\rm (ii)] $\tilde{\cR}_0(\chi)=\tilde{R}_{0}(\chi)$.
	\end{itemize}
\end{lemma}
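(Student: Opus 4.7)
Both equalities will be deduced from Corollary~\ref{cor:periodic:R0} by matching the relevant exponential growth bounds. The key observation is that the systems \eqref{equ:U_0} and \eqref{equ:U_infty} contain no spatial differential operators, so they are fiberwise ODEs in the parameter $x$; consequently
\[
[\bU_\chi^\mu(t,s)\bm{\phi}](x)=U_{x,\chi}^\mu(t,s)\bm{\phi}(x),\qquad [\tilde\bU_\chi^\mu(t,s)\bm{\phi}](x)=\tilde U_\chi^\mu(t,s)\bm{\phi}(x),
\]
for all $\bm{\phi}\in X$, $\mu>0$ and $t\ge s$ (the Dirichlet case is respected because multiplying a boundary-vanishing function pointwise by a matrix leaves it boundary-vanishing). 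Since $-\cV_\chi$ is cooperative by (V), $\cF_\chi$ is nonnegative by (F), and adding a nonnegative matrix to a cooperative one preserves cooperativity, the generators $-\cV_\chi+\tfrac{1}{\mu}\cF_\chi$ and $-\tilde\cV_\chi+\tfrac{1}{\mu}\tilde\cF_\chi$ satisfy hypothesis (M) of Section~\ref{sec:AB}, so Proposition~\ref{prop:eta} applies to both evolution families.

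For part (ii), the coefficients of \eqref{equ:U_infty} are $x$-independent, and Proposition~\ref{prop:eta} yields $\omega(\tilde\bU_\chi^\mu)=\omega(\tilde U_\chi^\mu)$ for every $\mu>0$. Combining Lemma~\ref{lem:A1A2}(iii),(v) with Corollary~\ref{cor:periodic:R0}(i), we find that $\tilde\cR_0(\chi)-\mu$ and $\tilde R_0(\chi)-\mu$ have the same sign for every $\mu>0$; a $\mu$ strictly between them would yield a contradiction, hence $\tilde\cR_0(\chi)=\tilde R_0(\chi)$.

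For part (i), Proposition~\ref{prop:eta} gives $\omega(\bU_\chi^\mu)=\max_{x\in\oOmega}\omega(U_{x,\chi}^\mu)$ for every $\mu>0$. Lemma~\ref{lem:A1A2}(ii),(iv) together with Corollary~\ref{cor:periodic:R0}(i) then imply that
\[
\R(\chi)>\mu\iff \max_{x\in\oOmega}\omega(U_{x,\chi}^\mu)>0\iff \exists\,x_0\in\oOmega:\ R_0(x_0,\chi)>\mu,
\]
and analogously $\R(\chi)<\mu$ iff $R_0(x,\chi)<\mu$ for every $x\in\oOmega$. Varying $\mu>0$ forces $\R(\chi)=\sup_{x\in\oOmega}R_0(x,\chi)$, and the continuity of $r(U_{x,\chi}^\mu(T,0))$ in $x$—which comes from the perturbation theory of matrices exactly as used in the proof of Proposition~\ref{prop:eta}—ensures the supremum is attained, giving $\R(\chi)=\max_{x\in\oOmega}R_0(x,\chi)$.

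The one point worth verifying carefully is the direct applicability of Proposition~\ref{prop:eta}, since in Section~\ref{sec:AB} it is stated in the context of the eigenvalue problem \eqref{equ:eig:main}; however, its statement and proof rely only on the cooperativity of the matrix generator and on identifying the evolution family as the fiberwise matrix flow, so no modification is needed here. I expect no substantial obstacle beyond this verification and the elementary sign-comparison arguments above.
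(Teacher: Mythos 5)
Your proof is correct and follows essentially the same route as the paper: both reduce the claim, via Lemma \ref{lem:A1A2} and the sign relations of Theorem \ref{thm:R0:periodic}/Corollary \ref{cor:periodic:R0}, to the identity $\omega(\bU_{\chi}^{\mu})=\max_{x\in\oOmega}\omega(U_{x,\chi}^{\mu})$ (resp. $\omega(\tilde\bU_{\chi}^{\mu})=\omega(\tilde U_{\chi}^{\mu})$) supplied by Proposition \ref{prop:eta} applied to the cooperative matrix $-\cV_{\chi}+\tfrac{1}{\mu}\cF_{\chi}$. The only cosmetic difference is that you squeeze $\R(\chi)$ between sign comparisons over all $\mu>0$ (adding a short continuity argument for attainment of the supremum), whereas the paper argues by the two cases $\max_x R_0(x,\chi)>0$ and $=0$ using the uniqueness of the root of $\omega(\Psi_{\mu})=0$; these are interchangeable finishes of the same argument.
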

\begin{proof}
	We only need to prove (i), since (ii) can be derived in a similar way.
	Fix an $\chi \in \cX$ and let $\overline{\cR}_0 := \max_{ x \in \oOmega} R_0(x,\chi)$. Below we proceed with  two cases.
	
	In the case where $\overline{\cR}_0>0$, there exists some $x_0 \in \oOmega$ such that $R_0(x_0,\chi)=\overline{\cR}_0>0$. By Theorem \ref{thm:R0:periodic}(ii) and Lemma \ref{lem:A1A2}(iv), we have $\omega(U_{x_0,\chi}^{\overline{\cR}_0})=0$. It follows from $\overline{\cR}_0 \geq R_0(x,\chi)$, $\forall x \in \oOmega$ and Corollary \ref{cor:periodic:R0}(i) and Lemma \ref{lem:A1A2}(iv) that $\omega(U_{x,\chi}^{\overline{\cR}_0}) \leq 0$, $\forall x \in \oOmega$, and hence, $\max_{ x \in \oOmega} \omega(U_{x,\chi}^{\overline{\cR}_0})=0$. By Proposition \ref{prop:eta}, we have $\omega(\bU_{\chi}^{\overline{\cR}_0})=\max_{ x \in \oOmega} \omega(U_{x,\chi}^{\overline{\cR}_0})=0$. Now Theorem \ref{thm:R0:periodic}(ii), Corollary \ref{cor:periodic:R0}(ii) and Lemma \ref{lem:A1A2}(ii) yield that $\R(\chi)=\overline{\cR}_0 >0$.
	
	In the case where $\overline{\cR}_0=0$, we have $R_0(x,\chi)=0$, $\forall x\in \oOmega$. Corollary \ref{cor:periodic:R0}(ii) and Lemma \ref{lem:A1A2}(iv) imply that $\omega(U_{x,\chi}^{\mu})<0$ for all $x\in \oOmega$, $\mu >0$. By Proposition \ref{prop:eta} again, we have $\omega(\bU_{\chi}^{\mu})=\max_{ x \in \oOmega} \omega(U_{x,\chi}^{\mu})<0$ for all $\mu >0$. Therefore, $\R(\chi)=0$ due to Corollary \ref{cor:periodic:R0}(ii) and Lemma \ref{lem:A1A2}(ii).
\end{proof}

Now we are ready to prove the main result of this paper.

\begin{theorem}\label{thm:R0:infty:Neumann}
	Assume that {\rm (F)} and {\rm (V)} hold, and there exists $\chi_0 \in \cX$
	such that  
	$\cV_{\chi}$ and $\cF_{\chi}$ converge uniformly to $\cV_{\chi_0}$ and $\cF_{\chi_0}$ as $\chi \rightarrow \chi_0$, respectively. We use $\R^{\cD}(\bmkp,\chi)$, $\R^{\cN}(\bmkp,\chi)$ and $\R^{\cR}(\bmkp,\chi)$ to denote the special cases of $\R(\bmkp,\chi)$ when $\cB$ represents the Dirichlet, Neumann and Robin boundary conditions, respectively. Then the following statements are vaild:
	\begin{itemize}
		\item[\rm (i)] 
		$\lim\limits_{\max_{1 \leq i \leq n} \kappa_i \rightarrow 0,\chi \rightarrow \chi_0} \R(\bmkp,\chi)= \max_{ x \in \oOmega} R_0 (x,\chi_0)$. 
		\item[\rm (ii)]
		$\lim\limits_{\min_{1 \leq i \leq n} \kappa_i \rightarrow +\infty,\chi \rightarrow \chi_0} \R^{\cD}(\bmkp,\chi)=
		\lim\limits_{\min_{1 \leq i \leq n} \kappa_i \rightarrow +\infty,\chi \rightarrow \chi_0} \R^{\cR}(\bmkp,\chi)= 0.$
		\item[\rm (iii)]
		$\lim\limits_{\min_{1 \leq i \leq n} \kappa_i \rightarrow +\infty,\chi \rightarrow \chi_0} \R^{\cN}(\bmkp,\chi)= \tilde{R}_{0}(\chi_0).$
	\end{itemize}
\end{theorem}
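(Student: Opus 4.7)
The plan is to apply Theorems \ref{thm:R0:periodic:conti} and \ref{thm:R0:periodic:conti:0} to the parametrized family $\{\bU_{\bmkp,\chi}^{\mu}\}$ indexed by the combined parameter $\theta := (\bmkp,\chi)$, supplying the abstract assumptions (A1) and (A2) via Lemma \ref{lem:A1A2}. The key translation is the identity $-\omega(\bU_{\bmkp,\chi}^{\mu}) = \lambda^{*}_{\bmkp,\chi,\mu}$ from Theorem \ref{thm:principal:existence}, where $\lambda^{*}_{\bmkp,\chi,\mu}$ is the principal eigenvalue of \eqref{equ:eig:main} with coefficient matrix $\cM = -\cV_{\chi} + \frac{1}{\mu}\cF_{\chi}$; this identifies the hypothesis $\omega(\Psi_{\mu}^{\theta}) \to \omega(\Psi_{\mu}^{\theta_0})$ of Theorem \ref{thm:R0:periodic:conti} with an eigenvalue convergence, which is precisely the content of Section \ref{sec:AB}. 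The limiting value of $\R(\bmkp,\chi)$ is then extracted via Lemma \ref{lem:0_infty}.

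For part (i), I would enlarge the parameter space to include a limit point $\theta_0$ corresponding to ``$\bmkp = \bm{0}$, $\chi = \chi_0$'' and assign it the no-diffusion evolution family $\Phi_{\chi_0}$ with infection operator $\cF_{\chi_0}(\cdot,t)$. Fix $\mu>0$. Theorem \ref{thm:0} applied to $\cM = -\cV_\chi + \frac{1}{\mu}\cF_\chi$, together with Proposition \ref{prop:eta}, gives $\omega(\bU_{\bmkp,\chi}^\mu) \to \max_{x\in\oOmega}\omega(U_{x,\chi}^\mu) = \omega(\bU_\chi^\mu)$ as $\max_i\kappa_i\to 0$; the uniform convergence of $(\cV_\chi,\cF_\chi)$ then supplies $\omega(\bU_\chi^\mu) \to \omega(\bU_{\chi_0}^\mu)$ as $\chi\to\chi_0$, and a joint-limit argument produces $\omega(\bU_{\bmkp,\chi}^\mu) \to \omega(\bU_{\chi_0}^\mu)$ as $\theta\to\theta_0$. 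Theorem \ref{thm:R0:periodic:conti} now yields $\R(\bmkp,\chi) \to \R(\chi_0) = \max_{x\in\oOmega}R_0(x,\chi_0)$ by Lemma \ref{lem:0_infty}(i). Parts (ii) and (iii) follow the same template with $\theta_0$ representing ``$\min_i\kappa_i = +\infty$, $\chi = \chi_0$.'' For (ii), inspection of the proof of Theorem \ref{thm:infty:D_R} gives the explicit bound $\lambda^{\cR}_{\bmkp,\chi,\mu} \geq \mu_1\underline{\kappa} - C(\mu,\chi)$ with $C(\mu,\chi)$ controlled by $\|\cV_\chi\|_\infty$ and $\|\cF_\chi\|_\infty$, which are uniformly bounded near $\chi_0$; combined with Proposition \ref{prop:eig:com} this forces $\omega(\bU^{\mu}_{\bmkp,\chi}) \to -\infty$ uniformly in both the Dirichlet and Robin cases, so Theorem \ref{thm:R0:periodic:conti:0} delivers $\R^{\cD},\R^{\cR} \to 0$. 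For (iii), Theorem \ref{thm:infty:Neumann} applied to $-\cV_\chi + \frac{1}{\mu}\cF_\chi$ yields $\omega(\bU^{\mu}_{\bmkp,\chi}) \to \omega(\tilde{U}^\mu_\chi)$ as $\min_i\kappa_i\to\infty$; uniform convergence of the spatial averages $\tilde{\cV}_\chi,\tilde{\cF}_\chi$, inherited from that of $\cV_\chi,\cF_\chi$, gives $\omega(\tilde{U}^\mu_\chi) \to \omega(\tilde{\bU}^\mu_{\chi_0})$, and Theorem \ref{thm:R0:periodic:conti} together with Lemma \ref{lem:0_infty}(ii) delivers $\R^{\cN}(\bmkp,\chi) \to \tilde{\cR}_0(\chi_0) = \tilde{R}_0(\chi_0)$.

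The principal technical obstacle is upgrading each single-parameter limit of Section \ref{sec:AB}, stated for a fixed coefficient matrix $\cM$, to a joint limit in which $\cM$ itself varies through the family $\{-\cV_\chi + \frac{1}{\mu}\cF_\chi\}_{\chi}$. I expect this to reduce in each case to verifying that the constants appearing in those proofs (the Poincar\'e constant, the uniform bound $\overline{m}$ on matrix entries, and the Floquet quantity $\tilde{\eta}$) depend continuously on $\cM$ in the uniform topology, so that the hypothesis $(\cV_\chi,\cF_\chi)\to(\cV_{\chi_0},\cF_{\chi_0})$ uniformly furnishes the required joint uniformity; alternatively, a sequence-extraction argument combining the fixed-$\chi$ convergence with the $\chi$-continuity at fixed $\bmkp$ closes the gap.
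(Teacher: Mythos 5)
Your overall architecture is the same as the paper's: reduce each statement to the convergence of $\omega(\bU^{\mu}_{\bmkp,\chi})$ for every fixed $\mu>0$, feed that into Theorems \ref{thm:R0:periodic:conti} and \ref{thm:R0:periodic:conti:0} (with $\theta=(\bmkp,\chi)$, reparametrized by $1/\kappa_i$ for the large-diffusion limits, and hypotheses (A1)--(A2) supplied by Lemma \ref{lem:A1A2}), and identify the limit value via Lemma \ref{lem:0_infty}. Part (ii) as you present it is essentially complete and coincides with the paper's route: the explicit bound $\lambda^{\cR}_{\bmkp}\geq \mu_1\underline{\kappa}-n\overline{m}$ from the proof of Theorem \ref{thm:infty:D_R}, with $\overline{m}$ (and the shift constant needed for (SM)) uniformly controlled for $\chi$ near $\chi_0$, plus Proposition \ref{prop:eig:com} to cover the Dirichlet case, gives $\omega(\bU^{\mu,\cD}_{\bmkp,\chi}),\omega(\bU^{\mu,\cR}_{\bmkp,\chi})\to-\infty$ jointly, and Theorem \ref{thm:R0:periodic:conti:0} finishes.

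The genuine gap is in parts (i) and (iii): the passage from the fixed-coefficient limits of Section \ref{sec:AB} (Theorems \ref{thm:0} and \ref{thm:infty:Neumann}, stated for a single matrix $\cM$) to the joint limit in $(\bmkp,\chi)$, which you yourself flag as ``the principal technical obstacle'' but do not carry out. Neither of your two fallbacks closes it as stated. The sequence-extraction idea (b) only combines the two iterated limits $\lim_{\bmkp}\lim_{\chi}$ and $\lim_{\chi}\lim_{\bmkp}$; without uniformity of one convergence in the other parameter, iterated limits do not yield a joint limit, and that uniformity is exactly what is in question. The constant-tracking idea (a) is not available for (i) inside this paper, because Theorem \ref{thm:0} is quoted from Bai--He as a black box, so you would have to reopen an external proof; for (iii) it would mean redoing the proof of Lemma \ref{lem:inf:irreducible} with $\chi$-dependent coefficients. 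The paper instead uses a monotone sandwich: for $\mu$ fixed, perturb $\cM_{\chi_0}=-\cV_{\chi_0}+\frac{1}{\mu}\cF_{\chi_0}$ entrywise by $\pm\delta_0$ to obtain $\chi$-independent cooperative matrices $\underline{\cM}^{\delta_0}_{\chi_0}\leq \cM_{\chi}\leq \overline{\cM}^{\delta_0}_{\chi_0}$ for $\rho(\chi,\chi_0)\leq\hat\sigma$ (this is where the uniform convergence of $\cV_{\chi},\cF_{\chi}$ enters), trap $\omega(\bU^{\mu}_{\bmkp,\chi})$ between the growth bounds of the two bounding problems via Proposition \ref{prop:eig:com}, apply Theorem \ref{thm:0} (resp.\ Theorem \ref{thm:infty:Neumann}) only to these two fixed matrices, and control the $\delta_0$-perturbation of the zero-diffusion (resp.\ averaged) problem by matrix perturbation theory and Proposition \ref{prop:eta}. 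Supplying this comparison argument (or an equivalent uniformity mechanism) is the missing step your proposal needs before Theorem \ref{thm:R0:periodic:conti} can be invoked in (i) and (iii).
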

\begin{proof}
	(i) We only prove $$\lim\limits_{\max_{1 \leq i \leq n} \kappa_i \rightarrow 0,\chi \rightarrow \chi_0} \R(\bmkp,\chi)= \max_{ x \in \oOmega} R_0 (x,\chi_0),$$ since the other two limiting profiles can be obtained in a similar way. In view of Lemma \ref{lem:0_infty} (i), 
	 it suffices to prove 
	\begin{equation}\label{equ:R0:0:N}
	\lim\limits_{\max_{1 \leq i \leq n} \kappa_i \rightarrow 0,\chi \rightarrow \chi_0}\R(\bmkp,\chi)=\R(\chi_0),
	\end{equation}
	 Here we choose  $\Theta=(\Int(\Real^n_+) \times \cX)\cup \{\theta_0\}$, and let   $\theta_0:=(0,0,\cdots,0,\chi_0)\in \Theta$ and 
	  $\theta:=( \kappa_1,\kappa_2,\cdots,\kappa_n,\chi)\in \Theta$.
	   In order  to obtain \eqref{equ:R0:0:N},  , it suffices to prove the following claim.
	
	{\it Claim.} For any $ \mu >0$, $\lim\limits_{\max_{1 \leq i \leq n,} \kappa_i \rightarrow 0,\chi \rightarrow \chi_0}\omega(\bU_{\bmkp,\chi}^{\mu}) = \omega(\bU^{\mu}_{\chi_0}) $.
	
	Without loss of generality, we prove the claim under the assumption $\mu=1$. For any $\chi \in \cX$, let $\cM_{\chi}=(m_{ij,\chi})_{n\times n}=-\cV_{\chi} + \cF_{\chi}$. For any given $\delta>0$, let $\overline{\cM}_{\chi_0}^{\delta}=(\overline{m}_{ij,\chi_0}^{\delta})_{n\times n}$ and $\underline{\cM}_{\chi_0}^{\delta}=(\underline{m}_{ij,\chi_0}^{\delta})_{n\times n}$, where
	$$
	\overline{m}_{ij,\chi_0}^{\delta}(x,t)= m_{ij,\chi_0}(x,t)+\delta, \text{ and }
	\underline{m}_{ij,\chi_0}^{\delta}(x,t)=
	\begin{cases}
	m_{ij,\chi_0}(x,t) - \delta& i=j,\\
	\max(0,m_{ij,\chi_0}(x,t) - \delta), & i \neq j.
	\end{cases}
	$$
	Thus, $\cM_{\chi}(x,t)$, $\cM_{\chi_0}(x,t)$, $\overline{\cM}_{\chi_0}^{\delta}(x,t)$ and $\underline{\cM}_{\chi_0}^{\delta}(x,t)$ are cooperative for all $(x,t) \in \oOmega \times \Real$. Let $\{\underline{\bU}_{\chi_0}^{\delta}(t,s): t \geq s \} $ and $\{\overline{\bU}_{\chi_0}^{\delta}(t,s): t \geq s \} $ be the evolution families on $X$ of 
	$$
	\frac{\partial \bmv}{\partial t}= \underline{\cM}_{\chi_0}^{\delta}(x,t) \bmv, ~ x \in \oOmega,~t>0,
	$$
	and
	$$
	\frac{\partial \bmv}{\partial t}= \overline{\cM}_{\chi_0}^{\delta}(x,t) \bmv, ~ x \in \oOmega,~t>0,
	$$
	respectively.
	
	By Proposition \ref{prop:eta} and the perturbation theory of matrix (see, e.g., \cite[Section II.5.1]{kato1976perturbation}), it follows that for any given $\epsilon>0$, there exists $\delta_0>0$ such that
	$$
	\omega(\bU^{1}_{\chi_0}) -\frac{\epsilon}{2} 
	\leq \omega(\underline{\bU}_{\chi_0}^{\delta}), 
	\text{ and }
	\omega(\overline{\bU}_{\chi_0}^{\delta}) 
	\leq \omega(\bU^{1}_{\chi_0})+\frac{\epsilon}{2},
	$$ for all $\delta \leq \delta_0$. Moreover, 
	$\underline{\cM}_{\chi_0}^{\delta_0} (x,t)\leq \cM_{\chi}(x,t) \leq \overline{\cM}_{\chi_0}^{\delta_0} (x,t)$, $\forall (x,t) \in \oOmega \times \Real$ provided that $\rho(\chi,\chi_0) \leq \hat{\sigma}$ for some $\hat{\sigma}>0$.
	Let $\{\underline{\bU}_{\bmkp,\chi_0}^{\delta_0}(t,s): t \geq s \}$ and $\{\overline{\bU}_{\bmkp,\chi_0}^{\delta_0}(t,s): t \geq s \}$ be the evolution families on $X$ of 
	$$
	\begin{cases}
	\frac{\partial \bmv}{\partial t}= \bmkp \cL(x,t) \bmv +\underline{\cM}_{\chi_0}^{\delta_0}(x,t) \bmv, &~ x \in \Omega,~t>0,\\
	\cB \bmv =0, & ~ x \in \partial\Omega,~t>0,
	\end{cases}
	$$
	and
	$$
	\begin{cases}
	\frac{\partial \bmv}{\partial t}= \bmkp \cL(x,t) \bmv + \overline{\cM}_{\chi_0}^{\delta_0}(x,t)\bmv, &~ x \in \Omega,~t>0,\\
	\cB \bmv =0, & ~ x \in \partial\Omega,~t>0,
	\end{cases}
	$$
	respectively.
	In view of Proposition \ref{prop:eig:com}, we have
	$$
	 \omega({\underline{\bU}}_{\bmkp,\chi_0}^{\delta_0}) 
	\leq \omega(\bU^{1}_{\bmkp,\chi}) 
	\leq \omega(\overline{\bU}_{\bmkp,\chi_0}^{\delta_0}) 
	$$
	 provided that $\rho(\chi,\chi_0) \leq \hat{\sigma}$. According to Theorem \ref{thm:0}, there is  $\hat{\kappa}>0$ such that 
	$$
	\omega({\underline{\bU}}_{\chi_0}^{\delta_0}) - \frac{\epsilon}{2} 
	\leq \omega({\underline{\bU}}_{\bmkp,\chi_0}^{\delta_0}), \text{ and }
	 \omega(\overline{\bU}_{\bmkp,\chi_0}^{\delta_0}) 
	\leq \omega(\overline{\bU}_{\chi_0}^{\delta_0}) +\frac{\epsilon}{2},
	$$ 
	provided that $\max_{1 \leq i \leq n} \kappa_i \leq \hat{\kappa}$. One can easily verify that
	$$
	\omega(\bU^{1}_{\chi_0})-\epsilon
	\leq\omega({\underline{\bU}}_{\chi_0}^{\delta_0}) - \frac{\epsilon}{2} 
	\leq \omega(\bU^{1}_{\bmkp,\chi}) 
	\leq \omega(\overline{\bU}_{\chi_0}^{\delta_0}) +\frac{\epsilon}{2} 
	\leq \omega(\bU^{1}_{\chi_0}) +\epsilon,
	$$
	whenever $\max_{1 \leq i \leq n} \kappa_i \leq \hat{\kappa}$ and $\rho(\chi,\chi_0) \leq \hat{\sigma}$.  This  proves the claim above.  It then 
	follows that  \eqref{equ:R0:0:N} holds true, and hence,  statement  (i) is valid.
	
	(ii) We use $\bU_{\bmkp,\chi}^{\mu,\cD}$, $\bU_{\bmkp,\chi}^{\mu,\cN}$ and $\bU_{\bmkp,\chi}^{\mu,\cR}$ to denote the special case of $\bU_{\bmkp,\chi}^{\mu}$ when $\cB$ represents the Dirichlet, Neumann and Robin boundary conditions, respectively. By repeating the arguments  in the proof of Claim, we see that for any $ \mu >0$, 
	$$
	\lim\limits_{\min_{1 \leq i \leq n} \kappa_i \rightarrow +\infty,\chi \rightarrow \chi_0}\omega(\bU_{\bmkp,\chi}^{\mu,\cD})
	=\lim\limits_{\min_{1 \leq i \leq n} \kappa_i \rightarrow +\infty,\chi \rightarrow \chi_0}\omega(\bU_{\bmkp,\chi}^{\mu,\cR})  
	= -\infty,
	$$  
	that is,
	$$
	\lim\limits_{\max_{1 \leq i \leq n} \frac{1}{\kappa_i} \rightarrow 0,\chi \rightarrow \chi_0}\omega(\bU_{\bmkp,\chi}^{\mu,\cD})
	=\lim\limits_{\max_{1 \leq i \leq n} \frac{1}{\kappa_i} \rightarrow 0,\chi \rightarrow \chi_0}\omega(\bU_{\bmkp,\chi}^{\mu,\cR})  
	= -\infty,
	$$ 
	We let $\alpha_i :=\frac{1}{\kappa_i}$, $\forall i=1,2,\cdots,n$, $\kappa_i>0$.
	Choosing  $\Theta=(\Int(\Real^n_+) \times \cX) \cup \{ \theta_0 \}$, $\theta:=(\alpha_1,\alpha_2,\cdots,\alpha_n, \chi)$, and $\theta_0:=(0,0,\cdots,0,\chi_0)$,
	we then obtain the desired conclusion due to Theorem \ref{thm:R0:periodic:conti:0} and Lemmas \ref{lem:A1A2}(i). 
	
	(iii) In view of Lemma \ref{lem:0_infty}(i), it suffices to prove 
		\begin{equation}\label{equ:R0:infty:N}
	\lim\limits_{\min_{1 \leq i \leq n} \kappa_i \rightarrow +\infty,\chi \rightarrow \chi_0}\R(\bmkp,\chi)=\tilde{\cR}_0(\chi_0).
	\end{equation}
	By the arguments similar to those in the proof of  the claim above, it follows  that for any $ \mu >0$, 
	$$
	\lim\limits_{\min_{1 \leq i \leq n,} \kappa_i \rightarrow +\infty,\chi \rightarrow \chi_0}\omega(\bU_{\bmkp,\chi}^{\mu,\cN}) = \omega(\tilde{\bU}^{\mu}_{\chi_0}),
	$$ 
	that is,
	$$
	\lim\limits_{\max_{1 \leq i \leq n} \frac{1}{\kappa_i} \rightarrow 0,\chi \rightarrow \chi_0}\omega(\bU_{\bmkp,\chi}^{\mu,\cN}) = \omega(\tilde{\bU}^{\mu}_{\chi_0}),
	$$ 
	We let $\alpha_i :=\frac{1}{\kappa_i}$, $\forall i=1,2,\cdots,n$, $\kappa_i>0$, and choose $\Theta=(\Int(\Real^n_+) \times \cX) \cup \{ \theta_0 \}$, $\theta:=(\alpha_1,\alpha_2,\cdots,\alpha_n, \chi)$, and $\theta_0:=(0,0,\cdots,0,\chi_0)$.
	Thus, \eqref{equ:R0:infty:N} follows from Theorem \ref{thm:R0:periodic:conti} and Lemma \ref{lem:A1A2}(i) and (iii).
\end{proof}

\section{Periodic solutions}\label{sec:nonlinear}
We use the same notations $\Omega$, $n$, $X$, $X_+$, $\bX$, $\bX_+$, $\bmkp$, $\cL$ and $\cB$ as in section \ref{sec:AB}, and consider the time-periodic reaction-diffusion system
\begin{equation}\label{equ:sys:main:nonlinear}
\begin{cases}
\frac{\partial \bmw}{\partial t}= \bmkp \cL(x,t) \bmw + \cG(x,t,\bmw),  &x\in \Omega,~t>0,\\
\cB \bmw=0, & x\in \partial \Omega,~t>0,
\end{cases}
\end{equation}
where the reaction term satisfies 
$$\cG(x,t,q_1,q_2,\cdots,q_n)=(G_1,G_2,\cdots,G_n)^T(x,t,q_1,q_2,\cdots,q_n)\in C^1(\oOmega \times \Real \times \Real^n_+,\Real^n),$$
and $G_i(x,t,q_1,q_2,\cdots,q_n) =G_i(x,t+T,q_1,q_2,\cdots,q_n)$ for some $T>0$. 

Let 
$$
\tilde{G}_i(t,q_1,q_2,\cdots,q_n)=\frac{1}{\vert \Omega \vert} \int_{\Omega} G_i(x,t,q_1,q_2,\cdots,q_n) \D x,
$$
and
$$
\tilde{\cG}(t,q_1,q_2,\cdots,q_n)=(\tilde{G}_1,\tilde{G}_2,\cdots,\tilde{G}_n)^T(t,q_1,q_2,\cdots,q_n).
$$ 
We assume that
\begin{itemize}
	\item[(H1)] For any  $1 \leq i \neq j \leq n$, $\frac{\partial}{\partial q_i } G_j (x,t,q_1,q_2,\cdots,q_n) \geq 0$ for all $(x,t) \in \oOmega \times \Real$.
	\item[(H2)] For each $x \in \oOmega$, the ODE system  $$\bmw'(t)= \cG(x,t,\bmw(t)), ~\bmw(0) \in \Real_+^n \setminus \{ 0 \}$$ has a globally asymptotically stable positive $T$-periodic solution, denoted by 
	$\bmw_{0}(x,t)=(w_{0,1},w_{0,2},\cdots,w_{0,n})^T(x,t)$. 
	Moreover, $\bmw_{0}$ is continuous on $\oOmega \times \Real$, and  the ODE system  $$\bmw'(t)= \tilde{\cG}(t,\bmw(t)), ~\bmw(0) \in \Real_+^n \setminus \{ 0 \}$$ has a globally asymptotically stable positive $T$-periodic solution, denoted by 
	$\tilde{\bmw}_{\infty}(t)=(\tilde{w}_{\infty,1},\tilde{w}_{\infty,2},\cdots,\tilde{w}_{\infty,n})^T(t)$. 
	\item[(H3)] $G_i (x,t,0)=0$ on $\oOmega \times \Real$ for all $1\leq i \leq n$, and there exists $\underline{\bmv}(t)=(\underline{v}_1,\underline{v}_2,\cdots,\underline{v}_n)^T(t)$ with each $\underline{v}_i(t)>0$ and $\underline{v}_i(t)=\underline{v}_i(t+T)$ for all $t \in \Real$  such that the following inequality holds for any $\tau \in[0,1]$:
	$$
	\tau \underline{\bmv}'(t) \leq  \cG(x,t,\tau \underline{\bmv}^T(t)), ~\forall 
	(x,t) \in \oOmega \times \Real.
	$$
	\item[(H4)] There exist $h>0$ and $\overline{\bm{v}}=(\overline{v}_1, \overline{v}_2, \cdots, \overline{v}_n)^T \in \Real^n$ with each $\overline{v}_i >0 $ such that 
	$$
	\sup_{\tau \rightarrow \infty, (x,t) \in \oOmega \times [0,T]} G_i (x,t,\tau \overline{\bmv}) \leq  -h, \, \,  \forall 1\leq i \leq n.
	$$
	\item[(H5)] For any $\bmkp$ with $\kappa_i>0$, $\bmw_{\bmkp}$ is the unique positive periodic solution of system \eqref{equ:sys:main:nonlinear}.
\end{itemize}

By the comparison arguments, we have the following observation.

\begin{lemma}\label{lem:nonlinear:exist}
	Assume that {\rm (H1)--(H4)} hold. Then  system \eqref{equ:sys:main:nonlinear} has at least one positive periodic solution. Furthermore, any positive periodic solution $\bmw=(w_1,w_2,\cdots,w_n)^T$ of \eqref{equ:sys:main:nonlinear}  satisfies $\tau_1\underline{v}_j(t) \leq  w_j \leq  \tau_2 \overline{v}_j$ in $\oOmega \times \Real$ for some $\tau_1 \in (0,1]$ and $\tau_2>0$, where $\underline{v}_j$ and $\overline{v}_j$ are given in {\rm (H3)} and {\rm (H4)}.
\end{lemma}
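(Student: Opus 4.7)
The plan is to run a sub-/supersolution argument on the Poincar\'e (period) map of \eqref{equ:sys:main:nonlinear}, exploiting the cooperativity guaranteed by (H1) together with the growth/decay controls in (H3)--(H4). Since section \ref{sec:nonlinear} treats the Neumann case, the spatially constant functions below automatically satisfy $\cB \bmw = 0$.

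\textbf{Step 1 (Sub- and supersolutions).} I first check that, for every $\tau \in (0,1]$, the spatially constant function $\underline{\bmw}_\tau(x,t) := \tau \underline{\bmv}(t)$ is a subsolution of \eqref{equ:sys:main:nonlinear}: because $\bmkp \cL \underline{\bmw}_\tau \equiv 0$, (H3) yields directly
\[
\partial_t \underline{\bmw}_\tau - \bmkp \cL \underline{\bmw}_\tau - \cG(x,t,\underline{\bmw}_\tau) = \tau \underline{\bmv}'(t) - \cG(x,t,\tau \underline{\bmv}(t)) \leq 0.
\]
In the same vein, (H4) ensures that for all sufficiently large $\tau>0$ the constant $\overline{\bmw}_\tau(x,t) := \tau \overline{\bmv}$ is a supersolution, since $\cG(x,t,\tau \overline{\bmv}) \leq -h < 0$ on $\oOmega \times \Real$ and $\partial_t \overline{\bmw}_\tau \equiv 0$.

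\textbf{Step 2 (Existence).} Let $P: X_+ \to X_+$ denote the period-$T$ Poincar\'e map of \eqref{equ:sys:main:nonlinear}, which is continuous and compact on bounded sets by standard parabolic smoothing. Since (H1) makes the kinetics cooperative, the comparison principle renders $P$ order-preserving. Choosing $\tau^* > 0$ large enough that Step~1 produces a supersolution, and then $\tau_0 \in (0,1]$ small enough that $\tau_0 \underline{\bmv}(0) \leq \tau^* \overline{\bmv}$ on $\oOmega$, the sub-/super-solution inequalities translate into $P(\tau_0 \underline{\bmv}(0)) \geq \tau_0 \underline{\bmv}(0)$ and $P(\tau^* \overline{\bmv}) \leq \tau^* \overline{\bmv}$. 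Monotone iteration starting from $\tau_0 \underline{\bmv}(0)$ then produces an increasing sequence in the order interval $[\tau_0 \underline{\bmv}(0), \tau^* \overline{\bmv}]$ which, by boundedness and compactness of $P$, converges in $X$ to a fixed point $\bmw^* > 0$ of $P$; this $\bmw^*$ is the initial value of a positive $T$-periodic solution of \eqref{equ:sys:main:nonlinear}.

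\textbf{Step 3 (Two-sided bound on any positive periodic solution).} Let $\bmw$ be an arbitrary positive $T$-periodic solution. Continuity and periodicity make $\bmw$ bounded on $\oOmega \times \Real$, so by (H4) I can choose $\tau_2 > 0$ large enough that simultaneously $\cG(x,t,\tau_2 \overline{\bmv}) \leq -h$ on $\oOmega \times \Real$ and $\bmw(\cdot,0) \leq \tau_2 \overline{\bmv}$ on $\oOmega$. Applying the comparison principle on $[0,\infty)$ with supersolution $\tau_2 \overline{\bmv}$ and solution $\bmw$ gives $\bmw(x,t) \leq \tau_2 \overline{\bmv}$ for all $t \geq 0$, extended to all $t \in \Real$ by $T$-periodicity. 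For the lower bound, strict positivity of each $w_j$ on the compact set $\oOmega \times [0,T]$ lets me pick $\tau_1 \in (0,1]$ small enough that $\bmw(\cdot,0) \geq \tau_1 \underline{\bmv}(0)$; since $\tau_1 \underline{\bmv}(t)$ is a subsolution by (H3), comparison gives $\bmw(x,t) \geq \tau_1 \underline{\bmv}(t)$ for $t \geq 0$, again extended to all $t$ by periodicity.

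The main obstacle I foresee is technical rather than substantive: one has to justify applying the parabolic comparison principle between a classical solution of \eqref{equ:sys:main:nonlinear} and the merely spatially homogeneous, time-dependent sub-/supersolutions $\tau \underline{\bmv}(t)$ and $\tau \overline{\bmv}$, and ensure that the monotone iteration in Step~2 converges in a function space compatible with the compactness of $P$. Both points are standard consequences of cooperativity (H1) combined with the smoothing of the parabolic evolution family, and the remaining verifications are routine.
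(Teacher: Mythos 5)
Your overall strategy coincides with the paper's: the same spatially constant sub/supersolution pair built from (H3)--(H4), followed by the standard comparison and monotone iteration of the Poincar\'e map (the paper compresses your Steps 1--2 into a citation of \cite[Lemma 5.3]{bai2020asymptotic}), and indeed the section is the Neumann setting, so constants satisfy the boundary condition. The existence argument is fine; note only the cosmetic point that (H4) is a limsup-type condition, so for large $\tau$ it yields $G_i(x,t,\tau\overline{\bmv})\le -h/2$ (as the paper uses) rather than $\le -h$.

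The one substantive discrepancy is in your Step 3: you choose $\tau_2$ \emph{after} fixing the periodic solution $\bmw$ (so that $\bmw(\cdot,0)\le\tau_2\overline{\bmv}$), hence your upper-bound constant depends on the solution, and therefore on $\bmkp$. The paper fixes $\tau_2$ once and for all from (H4) alone, and this uniformity is precisely what is used afterwards: $\tau_2$ defines the set $\cQ$, and the proofs of Lemma \ref{lem:nonlinear:convergence:infty:tilde} and Theorem \ref{thm:nonlinear:convergence:infty} require $\bmw_{\bmkp}\in\cQ$ for \emph{every} $\bmkp$, with constants independent of $\bmkp$. A comparison started from the solution's own initial data cannot deliver that. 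The repair is short: take $\rho\ge\tau_2$ with $\bmw(\cdot,0)\le\rho\overline{\bmv}$ and compare $\bmw$ with the time-decreasing supersolution $\sigma(t)\overline{\bmv}$, where $\sigma(t)=\max\{\tau_2,\rho-ct\}$ and $c=h/(2\max_{1\le i\le n}\overline v_i)$; since $G_i(x,t,\tau\overline{\bmv})\le -h/2$ for all $\tau\ge\tau_2$, this gives $\bmw(\cdot,t)\le\tau_2\overline{\bmv}$ for all sufficiently large $t$, and $T$-periodicity of $\bmw$ then yields $\bmw\le\tau_2\overline{\bmv}$ on $\oOmega\times\Real$ with the solution-independent $\tau_2$. (The lower-bound constant $\tau_1$ is genuinely solution-dependent, and your choice there matches the intended reading.)
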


\begin{proof}
We first choose a real number $\tau_2>0$ such that
	$G_i (x,t,\tau_2 \overline{\bmv}) \leq  -\frac{h}{2}<0.$
It then follows that 
	$$
	(\tau_2 \overline{v}_i)'=0 > G_i (x,t,\tau_2 \overline{\bmv}),~ \forall (x,t) \in \Omega \times \Real,~i =1,2,\cdots,n.
	$$
	On the other hand, there exists $\tau_1>0$ small enough such that $\tau_1 \underline{v}_i(t) < \tau_2 \overline{v}_i$,  $\forall t\in \Real, ~ i =1,2,\cdots,n$. The desired conclusion follows from  the standard comparison arguments and iteration method (see, e.g., \cite[Lemma 5.3]{bai2020asymptotic}).
\end{proof}

By the arguments similar to those for \cite[Theorem 1.7]{bai2020asymptotic} and \cite[Theorem 1.5]{lam2016asymptotic}, we have the following observation.
\begin{theorem}\label{thm:nonlinear:convergence:0}
	Assume that {\rm (H1)--(H5)} hold. Then $\bmw_{\bmkp} (x,t) \rightarrow \bmw_{0}(x,t)$ uniformly on $\oOmega \times \Real$ as $\max_{1 \leq i \leq n}  \kappa_i  \rightarrow 0$.
\end{theorem}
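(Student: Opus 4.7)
The plan is to argue by contradiction, adapting the strategy of \cite[Theorem 1.7]{bai2020asymptotic} and \cite[Theorem 1.5]{lam2016asymptotic} to the present cooperative periodic setting. Suppose the claimed uniform convergence fails; then there exist $\epsilon_0 > 0$, a sequence $\bmkp^{(k)}$ with $\max_{1 \le i \le n} \kappa^{(k)}_i \to 0$, and $(x_k, t_k) \in \oOmega \times [0,T]$ such that
$$
\|\bmw_{\bmkp^{(k)}}(x_k, t_k) - \bmw_0(x_k, t_k)\|_{\Real^n} \geq \epsilon_0.
$$
By compactness of $\oOmega \times [0,T]$ one may pass to a subsequence with $x_k \to x^* \in \oOmega$ and $t_k \to t^* \in [0,T]$. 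The first step is to record the uniform bounds $\tau_1 \underline{v}_j(t) \leq w_{\bmkp^{(k)},j}(x,t) \leq \tau_2 \overline{v}_j$ from the sub- and super-solution construction of Lemma \ref{lem:nonlinear:exist}; the key point is that $\tau_2$ depends only on the nonlinearity $\cG$ via (H4), while a uniform-in-$k$ lower bound $\tau_1$ is obtained by combining (H3) with an absorbing-set argument that exploits the uniqueness hypothesis (H5).

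The technical heart is to show that, along a further subsequence, the trajectory $\bmu_k(\cdot) := \bmw_{\bmkp^{(k)}}(x_k, \cdot)$ converges uniformly on $[0,T]$ to a positive $T$-periodic solution $\bmu^*(\cdot)$ of the limiting ODE
$$
\bmu'(t) = \cG(x^*, t, \bmu(t)).
$$
Testing the $i$-th equation of \eqref{equ:sys:main:nonlinear} against $w_{\bmkp^{(k)},i}$, integrating over $\Omega \times [0,T]$, and exploiting $T$-periodicity together with the boundary condition $\cB_i$ yields the weak gradient estimate
$$
\kappa^{(k)}_i \int_0^T \!\! \int_\Omega |\nabla w_{\bmkp^{(k)},i}|^2 \D x \D t \leq C,
$$
with $C$ independent of $k$. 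Introducing a spatial mollification of $\bmw_{\bmkp^{(k)}}$ at a length scale $r_k$ chosen so that $r_k \to 0$ yet $\kappa^{(k)}_i / r_k^2 \to 0$, the mollified trajectory satisfies the PDE up to a diffusion flux that vanishes in $k$; equicontinuity in $t$ follows directly from the PDE and the $L^\infty$ bound, so Arzel\`a--Ascoli delivers a limiting $T$-periodic curve $\bmu^*(t) \geq \tau_1 \underline{\bmv}(t) > 0$ solving the ODE at $x = x^*$.

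Assumption (H2) then closes the argument: $\bmw_0(x^*, \cdot)$ is the unique positive $T$-periodic solution of the ODE at $x^*$ (being globally asymptotically stable in the positive cone), so $\bmu^* \equiv \bmw_0(x^*, \cdot)$. Continuity of $\bmw_0$ on $\oOmega \times \Real$ together with $(x_k, t_k) \to (x^*, t^*)$ then contradicts the assumed separation $\epsilon_0$.

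The main obstacle in this plan is the mollification/diagonal step: the weak gradient estimate controls $\bmw_{\bmkp^{(k)}}$ only on average in $x$, so tuning $r_k$ to simultaneously force the diffusion flux to zero and recover the pointwise value $\bmw_{\bmkp^{(k)}}(x_k, \cdot)$ in the limit requires careful bookkeeping. A related subtlety is verifying that the uniform lower bound $\tau_1 \underline{\bmv}(t)$ is indeed available along the whole sequence $\bmkp^{(k)}$; I expect most of the technical effort to lie in transferring the small-diffusion balance from \cite{bai2020asymptotic,lam2016asymptotic} to the present vector-valued periodic problem while preserving the uniform bounds that (H3)--(H5) provide.
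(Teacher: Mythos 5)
There is a genuine gap, and it sits exactly where you flag ``careful bookkeeping'': the mollification/compactness step cannot be made to work from the estimates you have. In the small-diffusion regime the energy identity obtained by testing with $w_{\bmkp,i}$ gives only $\int_0^T\!\!\int_\Omega |\nabla w_{\bmkp,i}|^2\,\D x\,\D t \le C\,\kappa_i^{-1}$, which is vacuous as $\kappa_i\to 0$: the limit profile $\bmw_0(x,t)$ is genuinely $x$-dependent, so the gradients of $\bmw_{\bmkp}$ are expected to be of order one, and the bound carries no information about spatial oscillation. Your choice of mollification radius $r_k$ with $\kappa_i^{(k)}/r_k^2\to 0$ (i.e.\ $r_k\gg\sqrt{\kappa_i^{(k)}}$) does make the mollified diffusion flux vanish, but it leaves two steps unsupported: (a) the nonlinearity does not commute with mollification, so to say the mollified curve solves the ODE up to a vanishing error you must control $\sup_{|y-x_k|\le r_k}|\bmw_{\bmkp^{(k)}}(y,t)-\bmw_{\bmkp^{(k)}}(x_k,t)|$, and (b) to ``recover the pointwise value $\bmw_{\bmkp^{(k)}}(x_k,\cdot)$'' from its mollification you need the same oscillation control; the weighted gradient bound gives at best $\|w-w_{r_k}\|_{L^2}\lesssim r_k\kappa^{-1/2}\to\infty$, nothing in $L^\infty$. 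In other words, both steps require a uniform-in-$\bmkp$ spatial modulus of continuity, which is essentially the statement being proved, so the argument is circular at its core. (Also, equicontinuity in $t$ of the unmollified trajectory $\bmw_{\bmkp^{(k)}}(x_k,\cdot)$ does not ``follow directly from the PDE and the $L^\infty$ bound,'' since $\kappa_i\cL_i w_{\bmkp,i}$ is not pointwise controlled; and the uniform lower bound $\tau_1\underline{\bmv}$ comes straight from (H3) and comparison in Lemma \ref{lem:nonlinear:exist}, with no need of (H5).)

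For contrast, the paper does not run an energy/compactness argument at all for this theorem: it invokes the proofs of \cite[Theorem 1.7]{bai2020asymptotic} and \cite[Theorem 1.5]{lam2016asymptotic}, which are comparison-based. There one freezes the reaction on small balls where $\cG(\cdot,t,\cdot)$ and $\bmw_0(\cdot,t)$ vary by at most $\epsilon$, uses the cooperative structure (H1), the global asymptotic stability of the kinetic periodic solutions (H2) together with the barriers furnished by (H3)--(H4), and the strong maximum principle to build sub- and super-solutions that squeeze any positive periodic solution between $\epsilon$-perturbations of $\bmw_0(x_0,\cdot)$ once $\max_i\kappa_i$ is small; uniqueness (H5) then upgrades subsequential statements to convergence of the full family. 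The averaging-plus-Poincar\'e (energy) strategy you propose is the one this paper uses only in the opposite regime (Lemma \ref{lem:nonlinear:convergence:infty:tilde} and Theorem \ref{thm:nonlinear:convergence:infty}), where $\int_0^T\!\!\int_\Omega|\nabla w_{\bmkp,i}|^2\le C\kappa_i^{-1}$ is genuinely useful because $\kappa_i\to+\infty$. To repair your proof you would need to replace the mollification step by such a comparison argument (or otherwise establish uniform spatial equicontinuity independently), not merely tune $r_k$.
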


Next we turn to the asymptotic behavior of the positive periodic solution with large diffusion coefficients.
Let $\tau_2>0$ be given in Lemma \ref{lem:nonlinear:exist} and define a set
$$
\cQ:=\{(q_1,q_2,\cdots,q_n) : 0 \leq q_i \leq \tau_2 \overline{v}_i,~ 1 \leq i \leq n\}.
$$
Let $\tilde{\bmw}_{\bmkp}:=(\tilde{w}_{\bmkp,1},\tilde{w}_{\bmkp,2},\cdots,\tilde{w}_{\bmkp,n})^T$ and $\hat{\bmw}_{\bmkp}:=(\hat{w}_{\bmkp,1},\hat{w}_{\bmkp,2},\cdots,\hat{w}_{\bmkp,n})^T$, where 
\begin{equation}\label{equ:tilde:hat:w}
\tilde{w}_{\bmkp,i} (t) = \frac{1}{\vert \Omega \vert} \int_{\Omega} w_{\bmkp,i}(y,t) \D y, ~\hat{w}_{\bmkp,i} (x,t) =w_{\bmkp,i}(x,t) - \tilde{w}_{\bmkp,i} (t), ~ 1 \leq i \leq n.
\end{equation}
Then we have the following observation.
\begin{lemma}\label{lem:nonlinear:convergence:infty:tilde}
	Assume that {\rm (H1)--(H5)} hold. Then $\tilde{\bmw}_{\bmkp} (t) \rightarrow \tilde{\bmw}_{\infty}(t)$ uniformly on $ \Real$ as $\min_{1 \leq i \leq n}  \kappa_i  \rightarrow +\infty$.
\end{lemma}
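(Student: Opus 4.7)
\emph{Proof sketch.} The plan is to reduce the PDE problem to an ODE for the spatial average $\tilde{\bmw}_{\bmkp}$, show the reaction--diffusion correction vanishes as the diffusion grows, and then invoke the uniqueness assertion in (H2) through a compactness argument.

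First, I would integrate the $i$-th equation of \eqref{equ:sys:main:nonlinear} over $\Omega$. Since $\cB$ is the Neumann boundary condition and $\cL_i$ is in divergence form, the divergence theorem forces the diffusion term to vanish, yielding
\begin{equation*}
\frac{d \tilde{w}_{\bmkp,i}}{dt}(t) \;=\; \tilde{G}_i(t,\tilde{\bmw}_{\bmkp}(t)) + E_{\bmkp,i}(t),
\end{equation*}
where
\begin{equation*}
E_{\bmkp,i}(t) \;:=\; \frac{1}{|\Omega|} \int_\Omega \bigl[G_i(x,t,\bmw_{\bmkp}(x,t)) - G_i(x,t,\tilde{\bmw}_{\bmkp}(t))\bigr] \D x.
\end{equation*}
Lemma \ref{lem:nonlinear:exist} provides the uniform-in-$\bmkp$ bound $\bmw_{\bmkp} \in \cQ$, so $G_i$ evaluated along $\bmw_{\bmkp}$ is uniformly bounded and is Lipschitz in its third argument on the compact set $\cQ$.

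Next, for the energy estimate I would multiply the $i$-th equation of \eqref{equ:sys:main:nonlinear} by $w_{\bmkp,i}$, integrate over $\Omega \times [0,T]$, drop the time-derivative term by $T$-periodicity, and integrate the diffusion term by parts (the Neumann condition kills the boundary contribution). Uniform ellipticity yields
\begin{equation*}
\underline{a}\, \kappa_i \int_0^T\!\!\int_\Omega |\nabla w_{\bmkp,i}|^2 \D x \D t \;\leq\; \int_0^T\!\!\int_\Omega G_i(x,t,\bmw_{\bmkp})\, w_{\bmkp,i} \D x \D t \;\leq\; C,
\end{equation*}
for a constant $C$ independent of $\bmkp$. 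Poincar\'e's inequality applied to $\hat{w}_{\bmkp,i}$ (which has zero spatial mean) then gives $\|\hat{w}_{\bmkp,i}\|_{L^2(\Omega\times(0,T))}^2 \leq C'/\kappa_i$, and combining this with the Lipschitz bound on $G_i$ via Cauchy--Schwarz yields $\int_0^T |E_{\bmkp,i}(t)| \D t \to 0$ as $\min_{1\leq i \leq n}\kappa_i \to \infty$.

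Finally, since $\{\tilde{\bmw}_{\bmkp}\}$ is uniformly bounded in $\cQ$ and has a uniformly bounded derivative (the right-hand side of its ODE is bounded), it is equicontinuous on $[0,T]$. Along any sequence $\bmkp_k$ with $\min_i \kappa_{k,i} \to \infty$, Arzel\`a--Ascoli produces a subsequence converging uniformly to some continuous $T$-periodic limit $\tilde{\bmw}^*$. Passing to the limit in the integral form of the ODE (the $E_{\bmkp,i}$ integral vanishes by the $L^1$ bound, and the nonlinearity converges by continuity) shows $(\tilde{\bmw}^*)' = \tilde{\cG}(t,\tilde{\bmw}^*)$. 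The uniform lower bound $\tilde{w}_{\bmkp,i}(t) \geq \tau_1 \underline{v}_i(t)>0$ inherited from Lemma \ref{lem:nonlinear:exist} keeps $\tilde{\bmw}^*$ strictly positive, so by the uniqueness of the positive $T$-periodic orbit of the averaged ODE asserted in (H2), $\tilde{\bmw}^* = \tilde{\bmw}_\infty$. Independence of the subsequence promotes this to full convergence, uniformly on $\Real$ by $T$-periodicity. I expect the main subtlety to lie precisely in this last step: ensuring the limit is neither the trivial solution nor a merely nonnegative periodic orbit. The uniform strict positivity from Lemma \ref{lem:nonlinear:exist}, together with the uniqueness in (H2), is exactly what rules out those alternatives.
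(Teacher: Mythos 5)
Your proposal is correct and follows essentially the same route as the paper's proof: averaging over $\Omega$, the energy estimate giving $\int_0^T\int_\Omega|\nabla w_{\bmkp,i}|^2\,\D x\,\D t\lesssim \kappa_i^{-1}$, Poincar\'e's inequality for the zero-mean part, Lipschitz continuity of $G_i$ on $\cQ$ to kill the error term, and then Arzel\`a--Ascoli plus (H2) with a subsequence/contradiction argument to identify the limit with $\tilde{\bmw}_{\infty}$. Your explicit use of the $\bmkp$-uniform lower bound $\tau_1\underline{v}_i(t)$ from Lemma \ref{lem:nonlinear:exist} to rule out the trivial limit is a point the paper leaves implicit, but it is the same argument in substance.
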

\begin{proof}
	Taking the average of $i$-th equation of  \eqref{equ:sys:main:nonlinear} over $\Omega$, we have
	\begin{equation}\label{equ:w:tilde:diff}
	\frac{\D}{\D t}\tilde{w}_{\bmkp,i}(t)=
	\frac{1}{\vert \Omega \vert }\int_{\Omega} G_i(x,t,\bmw_{\bmkp}^T(x,t)) \D x,~ \forall t \in \Real.
	\end{equation}
	Clearly, there exists $C_0>0$ independent of $i$ and $\bmkp$ such that 
	$$
	\vert G_i (x,t,q_1,q_2,\cdots,q_n) \vert \leq C_0
	\text{ on } \oOmega \times \Real \times \cQ,~ i=1,2,\cdots,n.
	$$
	It then easily follows  that
	\begin{equation}\label{equ:w:tilde:diff:estimate}
	\left\vert \frac{\D}{\D t}\tilde{w}_{\bmkp,i}(t) \right\vert \leq C_0, ~ \forall t \in \Real, ~ i=1,2,\cdots,n.
	\end{equation}
	On the other hand, we integrate  \eqref{equ:w:tilde:diff}  from $0$ to $t$ to obtain
	$$
	\tilde{w}_{\bmkp,i}(t)-\tilde{w}_{\bmkp,i}(0)=
	\frac{1}{\vert \Omega \vert }\int_{0}^{t}\int_{\Omega} G_i(x,s,\bmw_{\bmkp}^T(x,s)) \D x \D s, ~\forall t \in [0,T].
	$$
	For each $i=1,2,\cdots,n$, we define $$g_{\bmkp,i}(t):=\frac{1}{\vert \Omega \vert}\int_{0}^{t} \int_{\Omega} 
	[G_i(x,s,\bmw_{\bmkp}^T(x,s)) - G_i(x,s,\tilde{\bmw}_{\bmkp}^T(s)) ] \D x \D s, ~\forall t \in \Real
	$$
	 to obtain 
	$$
	\begin{aligned}
	\tilde{w}_{\bmkp,i}(t)-\tilde{w}_{\bmkp,i}(0)
	&=
	\frac{1}{\vert \Omega \vert }\int_{0}^{t}\int_{\Omega} G_i(x,s,\tilde{\bmw}_{\bmkp}^T(s)) \D x \D s+ g_{\bmkp,i}(t)\\
	&=\int_{0}^{t} \tilde{G}_i (s,\tilde{\bmw}_{\bmkp}^T(s))  \D s+ g_{\bmkp,i}(t),~ \forall t \in [0,T],~i=1,2,\cdots,n.
	\end{aligned}
	$$
	Letting $\underline{\kappa}:=\min_{1 \leq i \leq n} \kappa_i$, we then have the following claim.
	
	{\it Claim 1.} For each $i=1,2,\cdots,n$, $g_{\bmkp,i}(t) \rightarrow 0$ uniformly on $[0,T]$ as $\underline{\kappa} \rightarrow +\infty$.
	
	Let us postpone the proof of Claim 1,  and reach the conclusion quickly.
	Let 
	$
	\tilde{\bX} :=C([0,T],\Real^n)
	$
	be a Banach space with the maximum norm 
	$
	\Vert \bmu \Vert_{\tilde{\bX}} = \max_{1 \leq i \leq n} \max_{ 0 \leq t \leq T} \vert u_i(t) \vert,
	$ where $\bmu=(u_1,u_2,\cdots,u_n)^T \in \tilde{\bX}$. Suppose, by contradiction, that  there is a sequence  $\bmkp_l=(\kappa_{l,1},\kappa_{l,2},\cdots,\kappa_{l,n})^T$ with
	$\min_{1 \leq i \leq n} \kappa_{l,i} \rightarrow +\infty$ as $l \rightarrow \infty$ such that $\Vert \tilde{\bmw}_{\bmkp_{l}} - \tilde{\bmw}_{\infty} \Vert_{\tilde{\bX}} \geq \epsilon_0$ for some $\epsilon_0 >0$ for all $\bmkp_l$ with $\kappa_{l,i}>0$. By the Ascoli–Arzel\`{a} theorem (see, e.g., \cite[Theorem I.28]{reed1980methods}), together with Lemma \ref{lem:nonlinear:exist} and the estimate \eqref{equ:w:tilde:diff:estimate}, it follows that there exists a subsequence  $\bmkp_{l_k}=(\kappa_{l_k,1},\kappa_{l_k,2},\cdots,\kappa_{l_k,n})^T$
	of  $\bmkp_l$ such that 
	$$
	\tilde{\bmw}_{\bmkp_{l_k}} \rightarrow \bmv=(v_{1},v_{2},\cdots,v_{n})^T \text{  in } \tilde{\bX} \text{  as }\min_{1 \leq i \leq n} \kappa_{l_k,i} \rightarrow +\infty
	$$ 
	and $\bmv(0)=\bmv(T)$. Moreover,
	\begin{equation}\label{equ:w:tilde:i}
	v_{i}(t)-v_{i}(0)
	=\int_{0}^{t} \tilde{G}_i (s,\bmv^T(s)) \D s,~ \forall t \in [0,T],~ 1 \leq i \leq n,
	\end{equation}
	that is,
	\begin{equation}\label{equ:w:tilde}
	\bmv(t)-\bmv(0)
	=\int_{0}^{t} \tilde{\cG} (s,\bmv^T(s)) \D s,~ \forall t \in [0,T].
	\end{equation}
	Thanks to (H2) and \eqref{equ:w:tilde}, we obtain $\bmv(t)=\tilde{\bmw}_{\infty}(t)$, $\forall t\in [0,T]$, a contradiction.
	
	In order to prove Claim 1, we  need  the following claim.
	
	{\it Claim 2.} There exists $C_1 >0$ dependent on $\underline{a}$, $T$, $\Omega$, $C_0$, $\tau_2$ and $\max_{1 \leq j \leq n} \overline{v}_j$ such that
	\begin{equation}\label{equ:nabla:w}
	\int_{0}^{T}\int_{\Omega} \vert \nabla w_{\bmkp,i} \vert^2 \D x \D t
	\leq C_1\kappa_i^{-1}.
	\end{equation}
	
Indeed, Lemma \ref{lem:nonlinear:exist} implies that $\bmw_{\bmkp}(x,t)=(w_{\bmkp,1},w_{\bmkp,2},\cdots,w_{\bmkp,n})^T(x,t) \in \cQ$ for all $\bmkp$ with $\kappa_i>0$.
	Choose $\underline{a} >0$ such that $\sum_{p,q=1}^{N} a_{pq}^{i} \xi_p \xi_q \geq \underline{a} \sum_{p=1}^{N} \xi_p^2$, $ \forall 1 \leq i \leq n$. We multiply the $i$-th equation of  \eqref{equ:sys:main:nonlinear} by $w_{\bmkp,i}$ and then integrate over $\Omega \times (0,T)$ to obtain
	$$
	\int_{0}^{T}\int_{\Omega} \kappa_i \sum_{p,q=1}^{N} a_{pq}^{i} (w_{\bmkp,i})_{x_p} (w_{\bmkp,i})_{x_q} \D x \D t
	= \int_{0}^{T}\int_{\Omega} G_i(x,t,\bmw_{\bmkp}^T) w_{\bmkp,i} \D x \D t.
	$$
	It follows that 
	$$
	\underline{a} \kappa_i\int_{0}^{T}\int_{\Omega}  \vert \nabla w_{\bmkp,i} \vert^2 \D x \D t \leq T \vert \Omega \vert C_0 \tau_2 \max_{1 \leq j \leq n} \overline{v}_j,   \quad \forall \, 1\leq i\leq n.
	$$
	Thus,  Claim 2 holds true.  Now let  us return to the proof of Claim 1. It is easy to see that
	$
	\int_{\Omega} \hat{w}_{\bmkp,i} (x,t) \D x =0
	$, $\forall t \in \Real,~  i=1,2,\cdots,n$. By Poincar\'e's inequality (see, e.g., \cite[Theorem 5.8.1]{evans2010partial}), we have
	$$
	\int_{\Omega} \vert \hat{w}_{\bmkp,i} \vert^2 \D x \leq C_2 	\int_{\Omega} \vert \nabla \hat{w}_{\bmkp,i} \vert^2 \D x,~ \forall 1 \leq i \leq n,~ t \in \Real,
	$$
	for some $C_2$ dependent on $\Omega$. Therefore,  it follows from $\nabla \hat{w}_{\bmkp,i} = \nabla w_{\bmkp,i}$ and \eqref{equ:nabla:w}
	that
	$$
	\int_{0}^{T}\int_{\Omega} \vert \hat{w}_{\bmkp,i} \vert^2 \D x \D t
	\leq C_1 C_2\kappa_i^{-1},~ 1 \leq i \leq n.
	$$
	Thus, the Caughy inequality yields
	$$
	\int_{0}^{T}\int_{\Omega} \vert \hat{w}_{\bmkp,i} \vert \D x \D t
	\leq C_3\kappa_i^{-\frac{1}{2}},~  1 \leq i \leq n,
	$$
	for some constant $C_3$ dependent on $\underline{a}$, $T$, $\Omega$, $C_0$, $\tau_2$ and $\max_{1 \leq j \leq n} \overline{v}_j$.
	On the other hand, since $\bmw_{\bmkp}^T(x,t)\in \cQ$ and $\tilde{\bmw}_{\bmkp}^T(x,t) \in \cQ$, $\forall (x,t)\in \oOmega \times \Real$, there exists $\hat{L}$ independent of $x$, $t$, $\bmkp$ and $i$ such that
	$$
	\vert
	G_i(x,t,\bmw_{\bmkp}^T(x,t)) - G_i(x,t,\tilde{\bmw}_{\bmkp}^T(t)) 
	\vert \leq \hat{L} \max_{1 \leq i \leq n} \vert w_{\bmkp,i}(x,t) - \tilde{w}_{\bmkp,i}(t) \vert, ~\forall (x,t)\in \oOmega \times \Real.
	$$
	By a straightforward computation, we have
	$$
	\begin{aligned}
	&\int_{0}^{t} \int_{\Omega}
	\left\vert 
	G_i(x,s,\bmw_{\bmkp}^T(x,s)) - G_i(x,s,\tilde{\bmw}_{\bmkp}^T(s)) 
	\right\vert
	\D x \D s \\
	\leq&  \max_{1 \leq i \leq n} \hat{L} \int_{0}^{t} \int_{\Omega} \vert w_{\bmkp,i}(x,s) - \tilde{w}_{\bmkp,i}(s) \vert\D x \D s\\
	\leq&  \max_{1 \leq i \leq n} \hat{L} \int_{0}^{t} \int_{\Omega} \vert \hat{w}_{\bmkp,i}(x,s)  \vert\D x \D s\\
	\leq& \hat{L} C_3 \underline{\kappa}^{-\frac{1}{2}}, ~\forall t \in [0,T].\\
	\end{aligned}
	$$
	This implies that $g_{\bmkp,i}(t) \rightarrow 0$ uniformly on $[0,T]$ as $ \underline{\kappa} \rightarrow +\infty$. 
\end{proof}

\begin{lemma} \label{lem:eig:L_i}
	For each $j=1,2,\cdots,n$, we consider the following eigenvalue problem:
	\begin{equation}\label{equ:eig:i:kappa}
	\begin{cases}
	\frac{\partial u}{\partial t}= \kappa_j \cL_j (x,t) u + \lambda u, &(x,t) \in \Omega \times \Real,\\
	\cB_j u=0, & (x,t) \in \partial \Omega\times \Real.
	\end{cases}
	\end{equation}
	Here, $\cB_j$ is the Neumann boundary condition given in \eqref{equ:Neumann}.
	Let $\{ \lambda_{j,m} \}_{m=1}^{\infty}$ be all eigenvalues of \eqref{equ:eig:i:kappa} such that $\Realparts \lambda_{j,m}$ is non-decreasing with respect to  $m$.
	Then the following statements are valid:
	\begin{itemize}
		\item[\rm (i)] The principal eigenvalue $\lambda_{j,1}$ of \eqref{equ:eig:i:kappa} is zero and simple.
		\item[\rm (ii)]  There exists $\gamma_0>0$, independent of $j$, such that $\Realparts \lambda_{j,m} > \gamma_0 \kappa_j$ for all $1\leq j\leq n$ and $m \geq 2$.
	\end{itemize}
\end{lemma}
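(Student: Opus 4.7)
The plan is to identify the eigenvalues of \eqref{equ:eig:i:kappa} with those of the period map of $\partial_t u = \kappa_j \cL_j u$ subject to $\cB_j u = 0$, and then to exploit the orthogonal decomposition $L^2(\Omega) = \mathrm{span}\{1\} \oplus V$, where $V := \{\phi \in L^2(\Omega) : \int_\Omega \phi \, dx = 0\}$. Let $\{\bU_j(t,s) : t \geq s\}$ denote the evolution family on $X = C(\oOmega)$ generated by this scalar equation; the substitution $u(x,t) = e^{\lambda t} v(x,t)$ shows that $\lambda$ is an eigenvalue of \eqref{equ:eig:i:kappa} if and only if $e^{-\lambda T}$ lies in the point spectrum of $\bU_j(T,0)$.

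For (i), since $\cL_j$ is in divergence form we have $\cL_j 1 = 0$, and $\cB_j 1 = 0$ under the Neumann condition, so $1 \in X$ is a fixed point of $\bU_j(T,0)$, giving $r(\bU_j(T,0)) \geq 1$. Applying Theorem \ref{thm:principal:existence} in the scalar case ($n = 1$, $\cM \equiv 0$) identifies $\lambda_{j,1} = -\ln r(\bU_j(T,0))/T$. The strong parabolic maximum principle makes $\bU_j(T,0)$ strongly positive and compact on $X$, so the Krein--Rutman theorem yields that $r(\bU_j(T,0)) = 1$ is a simple algebraic eigenvalue with the constant eigenfunction $1$; hence $\lambda_{j,1} = 0$ is simple.

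For (ii), the Neumann boundary condition together with the divergence form of $\cL_j$ gives $\int_\Omega \cL_j u \, dx = 0$ for every admissible $u$, so both $\mathrm{span}\{1\}$ and $V$ are $\bU_j(T,0)$-invariant. If $\bU_j(T,0) u = \zeta u$ with $\zeta \neq 1$, writing $u = c + v$ with $c$ the spatial mean forces $c = 0$, so every non-principal eigenfunction lies entirely in $V$. The key estimate is obtained by testing $\partial_t u = \kappa_j \cL_j u$ against $\overline{u}$, integrating by parts (the boundary term vanishes), and using symmetry of $a_{pq}^j$ together with uniform ellipticity, giving
\[
\tfrac{1}{2}\tfrac{d}{dt}\|u(\cdot,t)\|_{L^2(\Omega)}^2 \leq -\underline{a}\,\kappa_j \|\nabla u(\cdot,t)\|_{L^2(\Omega)}^2.
\]
Since $u(\cdot,t)$ remains in $V$, the Neumann Poincar\'e inequality $\|\nabla u\|_{L^2}^2 \geq \mu_0 \|u\|_{L^2}^2$ (with $\mu_0 > 0$ the first nontrivial Neumann eigenvalue of $-\Delta$ on $\Omega$, depending only on $\Omega$) applies, yielding $\|\bU_j(T,0)|_V\|_{L^2 \to L^2} \leq e^{-\underline{a}\mu_0 \kappa_j T}$. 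The identity $\bU_j(kT,0) = \bU_j(T,0)^k$ from $T$-periodicity and Gelfand's formula then bound the $L^2$-spectral radius of $\bU_j(T,0)|_V$ by $e^{-\underline{a}\mu_0 \kappa_j T}$; parabolic smoothing ensures the nonzero spectra on $L^2(\Omega)$ and $X$ coincide. Hence every $\lambda_{j,m}$ with $m \geq 2$ satisfies $\Realparts \lambda_{j,m} \geq \underline{a}\mu_0 \kappa_j$, and choosing $\gamma_0 := \underline{a}\mu_0/2$ (independent of $j$) gives the claimed strict inequality.

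The main obstacle is the minor functional-analytic check that the $L^2$-spectrum of $\bU_j(T,0)|_V$ indeed captures all non-principal eigenvalues of the original parabolic problem; this follows from the compactness of $\bU_j(T,0)$ together with parabolic regularity. Once the invariance of $V$ and this identification are in hand, the energy plus Poincar\'e estimate is essentially routine, and the uniformity of $\gamma_0$ in $j$ is automatic because both $\underline{a}$ and $\mu_0$ depend only on the ellipticity bound and on $\Omega$.
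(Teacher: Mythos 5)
Your proof is correct and reaches the same quantitative bound as the paper, but it is organized differently. The paper works directly with a time-periodic eigenpair $(\lambda,u)$, $u=u_R+\bmi u_I$: integrating the equation over $\Omega$ gives a linear ODE for the spatial means whose $T$-periodicity (together with $\Realparts\lambda>0$) forces $\int_\Omega u_R\,\D x=\int_\Omega u_I\,\D x=0$ for all $t$ (its Claim 2), and then an integral identity over $\Omega\times(0,T)$ obtained by testing with $\bar u$ (its Claims 1 and 3), combined with uniform ellipticity and the mean-zero Poincar\'e inequality, yields $\Realparts\lambda\ge \underline{a}\,C_1^{-1}\kappa_j$ with $C_1$ the Poincar\'e constant of $\Omega$. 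You instead pass to the period map $\bU_j(T,0)$, note that the constants and the mean-zero subspace $V$ are both invariant (conservation of the spatial average under the Neumann condition), and prove exponential $L^2$-decay on $V$ by the very same ellipticity-plus-Poincar\'e estimate, converting it into a modulus bound on all Floquet multipliers $\zeta\neq 1$ and hence into the lower bound on $\Realparts\lambda_{j,m}$; your treatment of (i) via the fixed point $\bU_j(T,0)1=1$ and Krein--Rutman is exactly the argument the paper leaves implicit. The two routes rest on the same two constants ($\underline a$ and the Poincar\'e constant, both independent of $j$), so uniformity of $\gamma_0$ comes out identically; what your version buys is that the mean-zero property is encoded once as invariance of $V$ rather than re-derived for each eigenfunction, at the modest price of the (routine, and correctly flagged) consistency of the $L^2$- and $C(\oOmega)$-point spectra of the compact period map. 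One shared caveat: both you and the paper tacitly count eigenvalues by their Floquet multiplier $e^{-\lambda T}$, i.e.\ modulo shifts by $2\pi\bmi/T$; the purely imaginary eigenvalues $2\pi\bmi k/T$ attached to the multiplier $1$ are excluded from $\{\lambda_{j,m}\}_{m\ge 2}$ by the same convention the paper invokes when it deduces $\Realparts\lambda>0$ from Definition \ref{def:principal}, so this is not a gap specific to your argument.
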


\begin{proof}
	Statement (i) can be derived by the Krein-Rutman theorem (see, e.g., \cite[Section 7]{hess1991periodic}).
	
	(ii) Choose $\underline{a} >0$ independent of $j$ such that $\sum_{p,q=1}^{N} a_{pq}^{j} \xi_p \xi_q \geq \underline{a} \sum_{p=1}^{N} \xi_p^2$. For any given $j=1,2,\cdots,n$, let $\lambda= \lambda_{R} +\bmi \lambda_{I}$, $u=u_{R}+ \bmi u_{I}$ be an eigenpair of \eqref{equ:eig:i:kappa} with $\lambda \neq 0$, where $\bmi^2+1=0$. By Definition \ref{def:principal}, we obtain $\lambda_{R}>0$. Then we have the following claims.
	
	{\it Claim 1.} For any $ (x,t) \in \partial \Omega\times \Real$, $\cB_j u_{R}=\cB_j u_{I}=0$.
	
	{\it Claim 2.} For any $t\in \Real$, $\tilde{u}_R(t):=\int_{\Omega} u_R (x,t) \D x=0$, $\tilde{u}_I(t):=\int_{\Omega} u_I (x,t) \D x=0$.
	
	{\it Claim 3.} There holds
	\begin{equation}\label{equ:int:lambda_R}
	-\int_{0}^{T}\int_{\Omega} \kappa_j [u_R \cL_j (x,t) u_R+ u_I \cL_j (x,t) u_I] \D x \D t
	= \lambda_R \int_{0}^{T}\int_{\Omega} \left(\vert u_R\vert^2 + \vert  u_I\vert^2\right) \D x \D t.
	\end{equation}	
	
	Let us postpone the proof of these claims, and continue the proof of (ii). 
	It then follows from Claim 1 that
	\begin{equation}
	\begin{aligned}
	&-\int_{0}^{T}\int_{\Omega} \kappa_j  [u_R \cL_j (x,t) u_R+ u_I \cL_j (x,t) u_I] \D x \D t\\
	=&
	\int_{0}^{T}\int_{\Omega} \kappa_j \sum_{p,q=1}^{N} a_{pq}^{j} 
	\left[(u_R)_{x_p} (u_R)_{x_q} + (u_I)_{x_p} (u_I)_{x_q}\right] \D x \D t\\
	\geq& \kappa_j \underline{a} \int_{0}^{T}\int_{\Omega} 
	\left(\vert\nabla u_R\vert^2 + \vert \nabla u_I\vert^2\right) \D x \D t.
	\end{aligned}
	\end{equation}
	By Poincar\'e's inequality (see, e.g., \cite[Theorem 5.8.1]{evans2010partial}) and Claim 2, we have
	$$
	\int_{\Omega} \vert u_R \vert^2 \D x \leq C_1 	\int_{\Omega} \vert \nabla u_R\vert^2 \D x,~
	\int_{\Omega} \vert u_I \vert^2 \D x \leq C_1 	\int_{\Omega} \vert \nabla u_I\vert^2 \D x,~\forall t \in \Real,
	$$
	for some $C_1>0$ only dependent on $\Omega$. It then follows that
	$$
	\begin{aligned}
	\kappa_j \underline{a} \int_{0}^{T}\int_{\Omega} 
	\left(\vert u_R\vert^2 + \vert  u_I\vert^2\right) \D x \D t 
	&\leq
	C_1\kappa_j \underline{a} \int_{0}^{T}\int_{\Omega} 
	\left(\vert\nabla u_R\vert^2 + \vert \nabla u_I\vert^2\right) \D x \D t\\
	&\leq -C_1\int_{0}^{T}\int_{\Omega} \kappa_j [u_R \cL_j (x,t) u_R+ u_I \cL_j (x,t) u_I] \D x \D t\\
	&= C_1\lambda_R \int_{0}^{T}\int_{\Omega} \left(\vert u_R\vert^2 + \vert  u_I\vert^2\right) \D x \D t.
	\end{aligned}
	$$
	Here the last equality follows from Claim 3.
	Thus, $\kappa_j \underline{a}  \leq  C_1\lambda_R$.  We choose $\gamma_0\in(0,\underline{a}C_1^{-1})$ independent of $j$ such that $\gamma_0 \kappa_j  \leq \lambda_R$, which yields  the desired conclusion.
	
	Now let us return to the proof of three claims above. Clearly,  Claim 1  
	follows from the  boundary condition of \eqref{equ:eig:i:kappa}.
	To prove Claim 2, we integrate \eqref{equ:eig:i:kappa} over $\Omega$ to obtain 
	\begin{equation}\label{equ:tilde:RI}
		\begin{cases}
		\frac{\D }{\D t} \tilde{u}_R= \lambda_{R} \tilde{u}_{R} -  \lambda_{I} \tilde{u}_{I},\\
		\frac{\D }{\D t} \tilde{u}_I= \lambda_{I} \tilde{u}_{R} +\lambda_{R} \tilde{u}_{I}.
		\end{cases}
	\end{equation}
	Multiplying the first equation of \eqref{equ:tilde:RI} by $\tilde{u}_R$ and the second one by $\tilde{u}_I$ and then adding them together, we have
	$$
	\frac{\D }{\D t} \left(\tilde{u}_R^2 +\tilde{u}_I^2\right)=  2\lambda_{R} \left(\tilde{u}_{R}^2 +\tilde{u}_{I}^2\right).
	$$
	Integrating the above equation over $[0,T]$ gives rise to
	$$
	\int_{0}^{T} \left(\tilde{u}_R^2 +\tilde{u}_I^2\right)\D t=
	\frac{1}{2\lambda_{R}}
	\left[\tilde{u}_R^2 (T)+\tilde{u}_I^2(T)-
	\tilde{u}_R^2 (0)-\tilde{u}_I^2(0)\right]=0.
	$$
	This proves Claim 2.  In order to verify Claim 3,  we	
	 multiply  \eqref{equ:eig:i:kappa}  by $\bar{u}=u_{R}- \bmi u_{I}$ and integrate it  over $[0,T] \times \Omega$ to obtain
	\begin{equation}\label{equ:u:int}
		\int_{0}^{T}\int_{\Omega}\frac{\partial u}{\partial t} \bar{u} \D x \D t= \int_{0}^{T}\int_{\Omega} \bar{u}\kappa_j \cL_j (x,t) u \D x \D t + \lambda \int_{0}^{T}\int_{\Omega} u \bar{u} \D x \D t.
	\end{equation}
	An easy computation yields 
	\begin{equation}\label{equ:u:image}
	\int_{0}^{T}\int_{\Omega}\frac{\partial u}{\partial t} \bar{u} \D x \D t
	=\bmi \int_{0}^{T}\int_{\Omega} \frac{\partial u_I}{\partial t} u_R-   \frac{\partial u_R}{\partial t}u_I \D x \D t,
	\end{equation}
	and 
	\begin{equation}\label{equ:u:baru}
	\int_{0}^{T}\int_{\Omega} u \bar{u} \D x \D t
	= \int_{0}^{T}\int_{\Omega} 
	\left(\vert u_R\vert^2 + \vert  u_I\vert^2\right) \D x \D t.
	\end{equation}
	On the other hand, we see  from Claim 1  that
	$$
	\int_{0}^{T}\int_{\Omega} u_I \cL_j (x,t) u_R \D x \D t= \int_{0}^{T}\int_{\Omega} u_R \cL_j (x,t) u_I \D x \D t,
	$$
	which implies that
	\begin{equation}\label{equ:u:real}
		\int_{0}^{T}\int_{\Omega} \bar{u}\kappa_j \cL_j (x,t) u \D x \D t= \int_{0}^{T}\int_{\Omega} \kappa_j  [u_R \cL_j (x,t) u_R+ u_I \cL_j (x,t) u_I] \D x \D t.
	\end{equation}
		Thanks to \eqref{equ:u:image}, \eqref{equ:u:baru} and \eqref{equ:u:real}, \eqref{equ:int:lambda_R} follows from the real part of \eqref{equ:u:int}.
\end{proof}

Now we are ready to prove the main result of this section.
\begin{theorem}\label{thm:nonlinear:convergence:infty}
	Assume that {\rm (H1)--(H5)} hold. Then $\bmw_{\bmkp} (x,t) \rightarrow \tilde{\bmw}_{\infty}(t)$ uniformly on $\oOmega \times \Real$ as $\min_{1 \leq i \leq n}  \kappa_i  \rightarrow +\infty$.
\end{theorem}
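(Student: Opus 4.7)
The plan is to decompose
$$
\bmw_{\bmkp}(x,t) - \tilde{\bmw}_{\infty}(t) = \hat{\bmw}_{\bmkp}(x,t) + \bigl(\tilde{\bmw}_{\bmkp}(t) - \tilde{\bmw}_{\infty}(t)\bigr),
$$
with $\hat{\bmw}_{\bmkp}$ as in \eqref{equ:tilde:hat:w}. Lemma \ref{lem:nonlinear:convergence:infty:tilde} already drives the second summand to zero uniformly, so the theorem reduces to establishing $\|\hat{\bmw}_{\bmkp}\|_{C(\oOmega \times \Real)} \to 0$ as $\underline{\kappa} := \min_{1 \leq i \leq n}\kappa_i \to +\infty$.

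To this end, subtracting \eqref{equ:w:tilde:diff} from the $i$-th equation of \eqref{equ:sys:main:nonlinear} and using $\cL_i(x,t)\tilde{w}_{\bmkp,i}(t) \equiv 0$ (since $\tilde{w}_{\bmkp,i}$ is spatially constant) shows that $\hat{w}_{\bmkp,i}$ is a $T$-periodic, zero-spatial-mean solution of
$$
\partial_t \hat{w}_{\bmkp,i} - \kappa_i \cL_i(x,t)\hat{w}_{\bmkp,i} = h_{\bmkp,i}(x,t), \qquad \cB_i \hat{w}_{\bmkp,i} = 0,
$$
with forcing $h_{\bmkp,i}(x,t) := G_i\bigl(x,t,\bmw_{\bmkp}^T(x,t)\bigr) - \tfrac{1}{|\Omega|}\int_{\Omega}G_i\bigl(y,t,\bmw_{\bmkp}^T(y,t)\bigr)\,\D y$ of zero spatial mean, which is uniformly bounded on $\oOmega \times \Real$ thanks to Lemma \ref{lem:nonlinear:exist}. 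Multiplying by $\hat{w}_{\bmkp,i}$, integrating over $\Omega \times [0,T]$, exploiting $T$-periodicity to cancel the time-boundary contribution, and invoking uniform ellipticity together with Poincar\'e's inequality on zero-mean functions (exactly as in the derivation of \eqref{equ:nabla:w} in the proof of Lemma \ref{lem:nonlinear:convergence:infty:tilde}) yields
$$
\|\hat{w}_{\bmkp,i}\|_{L^2(\Omega \times (0,T))} = O(\kappa_i^{-1}).
$$

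To upgrade this $L^2$-decay to the desired uniform decay, I plan to use the evolution family $\{\bU_{\kappa_i}(t,s)\}_{t \geq s}$ on $C(\oOmega)$ of $\partial_t - \kappa_i \cL_i$ under Neumann boundary conditions. By Lemma \ref{lem:eig:L_i}(ii) and the Floquet correspondence between eigenvalues of \eqref{equ:eig:i:kappa} (with $j=i$) and of the Poincar\'e map $\bU_{\kappa_i}(T,0)$, this latter map restricted to the invariant subspace $Z$ of zero-mean continuous functions has spectral radius at most $e^{-\gamma_0\kappa_i T}$. Since the forcing $h_{\bmkp,i}(\cdot,t) \in Z$, the unique $T$-periodic solution on $Z$ admits the representation
$$
\hat{w}_{\bmkp,i}(\cdot,t) = \bigl(I - \bU_{\kappa_i}(t,t-T)\bigr)^{-1} \int_{t-T}^{t} \bU_{\kappa_i}(t,s)\, h_{\bmkp,i}(\cdot,s)\, \D s,
$$
and combining the spectral-radius bound with the $L^\infty$-contractivity of the Neumann semigroup (maximum principle) and its $L^p$--$L^\infty$ ultracontractive smoothing on an initial short time interval of length independent of $\kappa_i$ yields $\|\hat{w}_{\bmkp,i}(\cdot,t)\|_{C(\oOmega)} = O(\kappa_i^{-1})$ uniformly in $t \in [0,T]$, as desired.

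The main obstacle is precisely the last step: converting the spectral-gap inequality of Lemma \ref{lem:eig:L_i}(ii), which is naturally an $L^2$, Rayleigh-quotient-type statement, into an $L^\infty$-type operator-norm decay on the zero-mean invariant subspace uniformly as $\kappa_i \to \infty$. As a contingency, if this ultracontractivity route proves cumbersome, a compactness/contradiction argument is available: assume $\|\hat{\bmw}_{\bmkp_k}\|_\infty \geq \varepsilon_0 > 0$ along a sequence with $\min_i \kappa_{k,i} \to \infty$; parabolic interior regularity applied after the time rescaling $\tau = \kappa_{k,i} t$ (which normalizes the diffusion and reduces the forcing by a factor $\kappa_{k,i}^{-1}$) furnishes a uniform modulus of continuity for $\hat{\bmw}_{\bmkp_k}$, Arzel\`a--Ascoli extracts a uniformly convergent subsequential limit, and the $L^2$-decay just established forces that limit to vanish, contradicting the assumption and completing the proof.
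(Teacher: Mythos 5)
Your overall architecture coincides with the paper's: decompose via \eqref{equ:tilde:hat:w}, dispose of $\tilde{\bmw}_{\bmkp}-\tilde{\bmw}_{\infty}$ by Lemma \ref{lem:nonlinear:convergence:infty:tilde}, and reduce to showing $\hat{\bmw}_{\bmkp}\rightarrow 0$ uniformly; your energy estimate (testing the $\hat w$-equation with $\hat w_{\bmkp,i}$, using periodicity, uniform ellipticity, Poincar\'e, and the uniform bound on the forcing from Lemma \ref{lem:nonlinear:exist}) giving $\Vert \hat w_{\bmkp,i}\Vert_{L^2(\Omega\times(0,T))}=O(\kappa_i^{-1})$ is correct, and treating the forcing as a fixed bounded inhomogeneity even spares you the paper's absorbing step. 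However, the decisive step of the theorem is precisely the one you flag as an ``obstacle'' and do not carry out: upgrading this to a sup-norm bound that decays uniformly in $t$ as $\underline{\kappa}\rightarrow+\infty$. The Floquet/spectral-radius information from Lemma \ref{lem:eig:L_i}(ii) ($r\leq e^{-\gamma_0\kappa_i T}$ on the zero-mean invariant subspace) does not by itself yield an operator-norm estimate $\Vert U_{\kappa_i,i}(t,s)u\Vert_{\infty}\leq C e^{-\gamma_0\kappa_i(t-s)}\Vert u\Vert$ with $C$ independent of $\kappa_i$, nor a $\kappa_i$-uniform bound on $(I-U_{\kappa_i,i}(t,t-T))^{-1}$ in your periodic representation; and the $L^p$--$L^\infty$ ultracontractive smoothing you invoke would have to be quantified together with its $\kappa_i$-scaling. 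That uniformity is exactly what the paper supplies: it works in the fractional power spaces $Z_{\beta,i}\subset L^{\infty}$ and cites the Daners--Medina estimates $\Vert U_{\kappa_i,i}(t,s)u\Vert_{Z_{\beta,i}}\leq C_2 e^{-\gamma_0\kappa_i(t-s)}(t-s)^{-\beta}\Vert u\Vert_{Z_i}$ on the zero-mean subspaces $P_i$, then closes with a variation-of-constants/Gronwall argument giving $\sup_{t}\Vert\hat{\bmw}_{\bmkp}(\cdot,t)\Vert_{Z_\beta^n}=O(\underline{\kappa}^{\beta-1})$.

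Your fallback compactness argument, as sketched, also has gaps. After the rescaling $\tau=\kappa_{k,i}t$ the period becomes $\kappa_{k,i}T\rightarrow\infty$, so the interior parabolic estimates give equicontinuity in $(x,\tau)$; translated back this yields spatial equicontinuity uniformly in $t$ but no temporal equicontinuity on $[0,T]$, so Arzel\`a--Ascoli on $\oOmega\times[0,T]$ does not apply as stated. Moreover the space--time $L^2$ decay does not directly control the single time slice where $\vert\hat{\bmw}_{\bmkp_k}\vert\geq\varepsilon_0$; one needs an extra step (e.g.\ a one-sided bound on $\frac{\D}{\D t}\int_{\Omega}\hat w_{\bmkp,i}^2\,\D x$ to propagate a lower bound over a fixed time interval, or the Poincar\'e-based $L^2$ decay $e^{-\underline{a}C^{-1}\kappa_i(t-s)}$ of the pure-diffusion family on zero-mean data combined with $\kappa$-scaled $L^2$--$L^\infty$ smoothing). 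Both of your routes can very likely be repaired along these lines, but as written the proposal names the key difficulty rather than resolving it, so it does not yet constitute a proof of the theorem.
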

\begin{proof}
	For convenience, let $\underline{\kappa}:=\min_{1 \leq i \leq n} \kappa_i$.
	With Lemma \ref{lem:nonlinear:convergence:infty:tilde} and the triangle inequality,  it suffices to show 
	$\hat{\bmw}_{\bmkp}(x,t) \rightarrow 0$ uniformly on $\oOmega \times \Real$ as $\underline{\kappa} \rightarrow +\infty$.
	Our arguments are motivated by  \cite{hale1986large,hale1987varying,hale1989shadow}. Choose $p>n$ and $\beta \in (\frac{1}{2}+\frac{n}{2 p},1)$.
	For each $i =1,2, \cdots,n$, let $Z_i=L^{p} (\Omega)$, $D(\cL_{i}(\cdot,0)):=W^{2,p}(\Omega)$, and let $Z_{\beta,i}$ be the fractional power space defined by $\cL_{i}(\cdot,0)$ (see, e.g., \cite{hess1991periodic,daners1992abstract,pazy1983semigroups}). Let
	$$
	Z^{n}:= Z_1 \times Z_2 \times \cdots \times Z_n 
	\text{ and }
	Z_{\beta}^{n}:= Z_{\beta,1} \times Z_{\beta,2} \times \cdots \times Z_{\beta,n}
	$$
	be two Banach spaces with the maximum norm 
	$$
	\Vert \bmu \Vert_{Z^{n}} = \max_{1 \leq i \leq n} \Vert u_i \Vert_{Z_{i}}, \text{ and }
	\Vert \bmu \Vert_{Z_{\beta}^{n}} = \max_{1 \leq i \leq n} \Vert u_i \Vert_{Z_{\beta,i}},
	$$
	where $\bmu=(u_1,u_2,\cdots,u_n)$. For each $i =1,2, \cdots,n$, let 
	$$
	P_i:= \left\{ u \in Z_i: \int_{\Omega} u(x) \D x=0 \right\} \text{ and }
	P_{\beta,i}:= \left\{ u \in Z_{\beta,i}: \int_{\Omega} u(x) \D x=0 \right\}.
	$$
	It then follows that 
	\begin{equation}\label{equ:Zi}
		Z_i=\Real \oplus P_i, \text{ and } Z_{\beta,i}=\Real \oplus P_{\beta,i},
		\, \, \forall  1\leq i \leq n.
	\end{equation}
	For convenience, we write 
	$\Vert u \Vert_{\infty} :=\max_{x \in \oOmega} \vert u(x) \vert$ for any $u \in C(\oOmega)$. According to \cite[Corollary 4.17]{daners1992abstract}, there exists $\hat{c}>0$ independent of $i$ such that $\Vert u \Vert_{\infty} \leq \hat{c} \Vert u \Vert_{Z_{\beta,i}}$ for any $u \in Z_{\beta,i}$.
	Taking the average of the $i$-th equation of  \eqref{equ:sys:main:nonlinear} over $\Omega$, we have
	\begin{equation}\label{equ:w:tilde:diff:2}
	\frac{\D}{\D t}\tilde{w}_{\bmkp,i}(t)=
	\frac{1}{\vert \Omega \vert }\int_{\Omega} G_i(x,t,\bmw_{\bmkp}^T(x,t)) \D x,~ \forall t \in \Real.
	\end{equation}
	We subtract \eqref{equ:w:tilde:diff:2} from the $i$-th equation of \eqref{equ:sys:main:nonlinear} to obtain
	\begin{equation}\label{equ:w:hat}
	\begin{aligned}
	\frac{\partial}{\partial t}\hat{w}_{\bmkp,i}(x,t)
	=& \kappa_i  \cL_i (x,t)\hat{w}_i 
	+ G_i(x,t,\bmw_{\bmkp}^T(x,t))
	-\frac{1}{\vert \Omega \vert }\int_{\Omega} G_i(y,t,\bmw_{\bmkp}^T(y,t)) \D y\\
	=& \kappa_i  \cL_i (x,t)\hat{w}_i + g_{i,1}(x,t,\bmw_{\bmkp},\tilde{\bmw}_{\bmkp})+ g_{i,2}(x,t,\bmw_{\bmkp},\tilde{\bmw}_{\bmkp})+ g_{i,3}(x,t,\bmw_{\bmkp},\tilde{\bmw}_{\bmkp}),
	\end{aligned}
	\end{equation}
	subject to the Neumann boundary condition,
	where 
	$$
	\begin{aligned}
	&g_{i,1}(x,t,\bmw_{\bmkp},\tilde{\bmw}_{\bmkp}):=G_i(x,t,\bmw_{\bmkp}^T(x,t)) - G_i(x,t,\tilde{\bmw}_{\bmkp}^T(t)),\\
	&g_{i,2}(x,t,\bmw_{\bmkp},\tilde{\bmw}_{\bmkp}):=G_i(x,t,\tilde{\bmw}_{\bmkp}^T(t))  - \vert \Omega \vert^{-1} \int_{\Omega} 
	G_i(y,t,\tilde{\bmw}_{\bmkp}^T(t)) \D y,\\
	&g_{i,3}(x,t,\bmw_{\bmkp},\tilde{\bmw}_{\bmkp}):=  
	\vert \Omega \vert^{-1} \int_{\Omega}  G_i(y,t,\tilde{\bmw}_{\bmkp}^T(t))    \D y-\vert \Omega \vert^{-1} \int_{\Omega} 
	G_i(y,t,\bmw_{\bmkp}^T(y,t))  \D y.
	\end{aligned}
	$$
	
	Since $\bmw_{\bmkp}^T(x,t)\in \cQ$ and $\hat{\bmw}_{\bmkp}^T(x,t) \in \cQ$, $\forall (x,t)\in \oOmega \times \Real$ and $G_i \in C^1(\oOmega\times\Real\times \Real^n) $, there exists $\hat{L}$ independent of $x$, $t$, $\bmkp$ and $i$ such that for any $ (x,t)\in \oOmega \times \Real$,
	$$
	\begin{aligned}
	&	\vert g_{i,1}(x,t,\bmw_{\bmkp},\tilde{\bmw}_{\bmkp}) \vert 
	\leq \hat{L} \max_{1 \leq i \leq n} \vert w_{\bmkp,i}(x,t) - \tilde{w}_{\bmkp,i}(t) 
	\vert\\
	\leq &
	\max_{1 \leq i \leq n}\hat{L} \Vert \hat{w}_{\bmkp,i}(\cdot,t) \Vert_{\infty} 
	\leq \hat{L} \hat{c} \max_{1 \leq i \leq n}  \Vert \hat{w}_{\bmkp,i}(\cdot,t) \Vert_{Z_{\beta,i}}
	= \hat{L} \hat{c} \Vert \hat{\bmw}_{\bmkp} (\cdot,t)\Vert_{Z_{\beta}^{n}},
	\end{aligned}
	$$
	and hence,
	$$
	\vert g_{i,3}(x,t,\bmw_{\bmkp},\tilde{\bmw}_{\bmkp}) \vert
	\leq \hat{L} \hat{c}   \Vert \hat{\bmw}_{\bmkp}(\cdot,t) \Vert_{Z_{\beta}^{n}}.
	$$
	Moreover, there exists $C_1>0$ such that
	$$
	\vert	g_{i,2}(x,t,\bmw_{\bmkp},\tilde{\bmw}_{\bmkp})\vert \leq C_1.
	$$
	For each $i=1,2,\cdots,n$, let $\{U_{\kappa_i,i} (t,s) : t \geq s \}$ be the evolution family on $Z_i$ of 
	$$
	\begin{cases}
	\frac{\partial u}{\partial t}= \kappa_i \cL_i (x,t)u, &x \in \Omega,~ t>0,\\
	\cB_i u=0, & x \in \partial \Omega, ~ t>0.
	\end{cases}
	$$
	Here, $\cB_i$ is the Neumann boundary condition given in \eqref{equ:Neumann}.
	Choose $\gamma_0>0$ as in Lemma \ref{lem:eig:L_i}.
	For any $\mu \in \sigma(U_{\kappa_i,i}(T,0)) \setminus \{ 1 \}$, $i=1,2,\cdots,n$, it is easy to see that $\mu =e^{-\lambda_{i,m_{\mu}} \kappa_iT}$ for some $m_{\mu} \geq 2$, which implies that $\vert \mu \vert = \vert e^{-\Realparts\lambda_{i,m_{\mu}} \kappa_iT}\vert < e^{- \gamma_0 \kappa_i T}$.
	By  \cite[Theorem 7.3]{daners1992abstract}, the decomposition forms \eqref{equ:Zi},  and  the arguments similar to those for \cite[Proposition 6.8]{daners1992abstract},  it then follows that 
	$$
	\begin{aligned}
	\Vert U_{\kappa_i,i} (t,s) u \Vert_{Z_{\beta,i}} 
	\leq C_2 e^{-\gamma_0 \kappa_i (t-s)} \Vert u \Vert_{Z_{\beta,i}},~ \forall t\geq s,~ u \in P_{\beta,i},\\
	\Vert U_{\kappa_i,i} (t,s) u \Vert_{Z_{\beta,i}} 
	\leq C_2 e^{-\gamma_0 \kappa_i (t-s)}(t-s)^{-\beta} \Vert u \Vert_{Z_{i}}, ~\forall t\geq s,~u \in P_{i},
	\end{aligned}
	$$
	for some $C_2>0$ independent of $t$ and $s$.
	On the other hand, by the constant-variation formula, for any $t >0$ and $i=1,2,\cdots,n$, we have
	$$
	\hat{w}_{\bmkp,i}(\cdot,t) = U_{\kappa_i,i}(t,0) 	\hat{w}_{\bmkp,i}(\cdot,0) +
	\int_{0}^{t} U_{\kappa_i,i}(t,s) [g_{i,2}+g_{i,1} +g_{i,3}](\cdot,s,\bmw_{\bmkp}(\cdot,s),\tilde{\bmw}_{\bmkp}(s)) \D s.
	$$
	It is easy to verify that
	$$
	\int_{\Omega} \hat{w}_{\bmkp,i} \D x=0,~
	\int_{\Omega} (g_{i,1} +g_{i,3}) \D x=0,
	\text{ and } \int_{\Omega} g_{i,2} \D x=0,~ \forall t \in \Real,
	$$
	which implies that $\hat{w}_{\bmkp,i} \in P_{\beta,i}$, $g_{i,1} +g_{i,3} \in P_i$ and $g_{i,2} \in P_i$. Thus, we obtain
	$$
	\begin{aligned}
	\Vert \hat{w}_{\bmkp,i}(\cdot,t) \Vert_{Z_{\beta,i}} 
	\leq  &C_2 e^{-\gamma_0 \kappa_i t}  \Vert \hat{w}_{\bmkp,i}(\cdot,0) \Vert_{Z_{\beta,i}}  
	+C_2 C_1 \int_{0}^{t}  e^{-\gamma_0 \kappa_i (t-s)}(t-s)^{-\beta} \D s\\
	&+ 2C_2 \hat{L} \hat{c} \int_{0}^{t}  e^{-\gamma_0 \kappa_i (t-s)}(t-s)^{-\beta}  \Vert \hat{\bmw}_{\bmkp} (\cdot,s)\Vert_{Z_{\beta}^{n}} \D s.
	\end{aligned}
	$$
	It then follows from $\underline{\kappa}=\min_{1 \leq i \leq n} \kappa_i$ and $\gamma_0 > 0$ that
	$$
	\begin{aligned}
	\Vert \hat{\bmw}_{\bmkp}(\cdot,t) \Vert_{Z_{\beta}^{n}} 
	\leq  &C_2 e^{-\gamma_0 \underline{\kappa} t}  \Vert \hat{\bmw}_{\bmkp}(\cdot,0) \Vert_{Z_{\beta}^{n}}  
	+C_2 C_1 \int_{0}^{t}  e^{-\gamma_0 \underline{\kappa} (t-s)}(t-s)^{-\beta} \D s\\
	&+2C_2 \hat{L} \hat{c} \int_{0}^{t}  e^{-\gamma_0 \underline{\kappa} (t-s)}(t-s)^{-\beta}  \Vert \hat{\bmw}_{\bmkp}(\cdot,s) \Vert_{Z_{\beta}^{n}} \D s.
	\end{aligned}
	$$
	Choose $\gamma_1 \in (0, \gamma_0)$, and define $\zeta(t):= e^{\gamma_1 \underline{\kappa} t} \Vert \hat{\bmw}_{\bmkp}(\cdot,t) \Vert_{Z_{\beta}^{n}} $,  $\overline{\zeta}(t):=\sup\{\zeta(s): 0 \leq s \leq t \}$ and 
	$$K:= \int_{0}^{\infty} s^{-\beta}e^{-s(1- \gamma_1(\gamma_0)^{-1})} \D s.$$ 
	Then we have
	$$
	\zeta(t) \leq C_2 e^{-(\gamma_0- \gamma_1)  \underline{\kappa}  t} \zeta(0) + C_2 C_1 K(\underline{\kappa} \gamma_0)^{\beta-1} e^{ \gamma_1  \underline{\kappa}  t}  + 2 C_2 \hat{L} \hat{c} K (\underline{\kappa} \gamma_0)^{\beta-1}  \overline{\zeta}(t),~ t\geq 0, 
	$$
	and hence,
	$$
	\overline{\zeta}(t) \leq C_2 \zeta(0) + C_2 C_1 K(\underline{\kappa} \gamma_0)^{\beta-1} e^{ \gamma_1  \underline{\kappa}  t} +  2C_2 \hat{L} \hat{c} K (\underline{\kappa} \gamma_0)^{\beta-1}  \overline{\zeta}(t),~ t\geq 0.
	$$
	For any $\bmkp$ with $\kappa_i>0$, let $\xi(\bmkp):=2C_2 \hat{L} \hat{c} K (\underline{\kappa} \gamma_0)^{\beta-1}$.
	Notice that $\xi(\bmkp) \rightarrow 0$ as $\underline{\kappa} \rightarrow +\infty$. From now on, we assume that $\underline{\kappa}$ is large enough such that $\xi(\bmkp)<\frac{1}{2}$, and hence, $(1-\xi(\bmkp))^{-1} \leq 2$. This leads to
	$$
	\begin{aligned}
	\overline{\zeta}(t) 
	&\leq(1-\xi(\bmkp))^{-1} [C_2  \zeta(0) + C_2 C_1 K(\underline{\kappa} \gamma_0)^{\beta-1} e^{ \gamma_1  \underline{\kappa}  t} ]\\
	&\leq 2 [C_2 \zeta(0) + C_2 C_1 K(\underline{\kappa} \gamma_0)^{\beta-1} e^{ \gamma_1  \underline{\kappa}  t}].
	\end{aligned}
	$$
	Thus, we conclude that 
	$$
	 \Vert \hat{\bmw}_{\bmkp}(\cdot,t) \Vert_{Z_{\beta}^{n}} \leq  e^{ -\gamma_1  \underline{\kappa}  t} 	\overline{\zeta}(t) \leq 2 [C_2 \zeta(0) e^{ -\gamma_1  \underline{\kappa}  t}  + C_2 C_1 K(\underline{\kappa} \gamma_0)^{\beta-1}],
	$$
	and hence,
	$$
	\limsup_{t \rightarrow +\infty} \Vert \hat{\bmw}_{\bmkp}(\cdot,t) \Vert_{Z_{\beta}^{n}}  \leq2  C_2 C_1 K(\underline{\kappa} \gamma_0)^{\beta-1}.
	$$
	Since $\hat{\bmw}(x,t)$ is periodic in $t \in \Real$, it follows that
	$$
	\sup_{t \in \Real} \Vert \hat{\bmw}_{\bmkp}(\cdot,t) \Vert_{Z_{\beta}^{n}}  \leq 2   C_2 C_1 K(\underline{\kappa} \gamma_0)^{\beta-1}.
	$$
	Therefore, $\sup_{t \in \Real} \Vert \hat{\bmw}_{\bmkp}(\cdot,t) \Vert_{Z_{\beta}^{n}} \rightarrow 0 $ as   $\underline{\kappa} \rightarrow +\infty$, which implies that $\hat{\bmw}_{\bmkp}(x,t) \rightarrow 0$ uniformly on $\oOmega \times \Real$ as $\underline{\kappa} \rightarrow +\infty$. 
\end{proof}

\section{An application}\label{example}
In this section, we apply our developed theory to study the asymptotic behavior of the basic reproduction ratio for a reaction-diffusion 
model of Zika virus transmission with small and large diffusion coefficients. 

According to  \cite{fitzgibbon2017outbreak, li2020preprint}, the model is
governed by the following time-periodic reaction-diffusion system:
\begin{equation}\label{equ:sys:Hi_Vu_Vi}
	\begin{cases}
	\frac{\partial H_i}{\partial t}
	= \kappa_1  \nabla \cdot( \delta_1(x,t) \nabla H_i )   - \gamma(x,t) H_i + \sigma_1 (x,t) H_u(x) V_i, & x \in \Omega,~t>0,\\
	\frac{\partial V_u}{\partial t}
	= \kappa_2  \nabla \cdot( \delta_2(x.t) \nabla V_u ) - \sigma_2(x,t) V_u H_i   \\
	\hspace{1.1cm}+ \beta(x,t) (V_u +V_i)- \mu_1(x,t) V_u- \mu_2(x,t)(V_u +V_i) V_u, & x \in \Omega,~t>0,\\
	\frac{\partial V_i}{\partial t}
	= \kappa_2 \nabla \cdot( \delta_2(x,t) \nabla V_i ) + \sigma_2(x,t) V_u H_i \\
	\hspace{1.1cm}- \mu_1(x,t) V_i- \mu_2(x,t)(V_u +V_i) V_i,& x \in \Omega,~t>0,\\
	\frac{\partial H_i}{\partial \bm{\nu}}
	=\frac{\partial V_u}{\partial \bm{\nu}}
	=\frac{\partial V_i}{\partial \bm{\nu}}=0,
	& x \in \partial \Omega,~ t >0.
	\end{cases}
\end{equation}
Here  $\Omega$ is a bounded spatial domain with smooth boundary $\partial \Omega$,  $H_i(x,t)$, $V_i(x,t)$ and $V_u(x,t)$ are  the densities of infected hosts, susceptible vectors, and infected vectors, respectively. 
The parameters of the model are shown as in Table \ref{tab:para}.
We assume that $\delta_1$, $\delta_2$ $\in C^{1+\alpha}(\oOmega  \times \Real)$ and $H_u \in C^{\alpha}(\oOmega)$ are strictly positive for some  $0 < \alpha < 1$; the functions $\beta$, $\gamma$, $\mu_1$, $\mu_2$, $\sigma_1$ and $\sigma_2$  are $T$-periodic, strictly positive and H\"{o}lder continuous on $\oOmega \times \Real$. 
\begin{table}[ht]
	\caption{Biological interpretations of parameters}
	\centering\label{tab:para}
	\begin{tabular}{cl}
		\hline 
		$\kappa_1$ and $\delta_1(x,t)$ &  Diffusion coefficient for hosts.\\
		$\kappa_2$ and $\delta_2(x,t)$ & Diffusion coefficient for vectors.\\
		$H_u(x)$ & Densities of susceptible hosts at location $x$.\\
		$\gamma(x,t)$ & Loss rate of infected hosts at location $x$ and time $t$. \\
		$\beta(x, t)$ & Breeding rate of vectors at location $x$ and time $t$.\\
		$\sigma_1(x,t)$ &  Transmission rate for susceptible hosts at location $x$ and time $t$.\\
		$\sigma_2(x,t)$& Transmission rate for susceptible vectors at location $x$ and time $t$.\\
		$\mu_1(x,t)$ & Natural mortality rate of vectors at location $x$ and time $t$.\\
		$\mu_2(x,t)$ & Density dependent loss rate of vectors at location $x$ and time $t$.\\
		\hline 
	\end{tabular} 
\end{table}

Let $V=V_i + V_u$ be the total densities of the vector population. It then follows that
\begin{equation}\label{equ:sys:V}
	\frac{\partial V}{\partial t}
	= \kappa_2 \nabla \cdot( \delta_2(x,t) \nabla V ) + \beta(x,t) V- \mu_1(x,t) V- \mu_2(x,t) V^2,~ x \in \Omega,~t>0,
\end{equation}
subject to the Neumann boundary condition.  We further assume that $$\beta (x,t) - \mu_1 (x,t) >0, ~\forall (x,t) \in \oOmega \times \Real.$$
Since \eqref{equ:sys:V}  is a standard periodic parabolic logistic-type equation, by \cite[Theorem 28.1]{hess1991periodic} or \cite[Theorem 3.1.5]{zhao2017dynamical}, system \eqref{equ:sys:V} admits a globally stable positive $T$-periodic solution $V^*_{\kappa_2}(x,t)$, and hence,
any positive solution $V(x,t)$ of it satisfies
$$
\lim\limits_{t \rightarrow +\infty} \Vert V(\cdot,t) - V^*_{\kappa_2}(\cdot,t) \Vert_{C(\oOmega)} =0.
$$

For  convenience, we  transform system \eqref{equ:sys:Hi_Vu_Vi} into
an equivalent one:
\begin{equation}\label{equ:sys:Hi_Vi_V}
\begin{cases}
\frac{\partial H_i}{\partial t}
= \kappa_1  \nabla \cdot( \delta_1(x,t) \nabla H_i )   - \gamma(x,t) H_i + \sigma_1 (x,t) H_u(x) V_i, & x \in \Omega,~t>0,\\
\frac{\partial V_i}{\partial t}
= \kappa_2  \nabla \cdot( \delta_2(x,t) \nabla V_i ) + \sigma_2(x,t) (V- V_i) H_i \\
\hspace{1.1cm}- \mu_1(x,t) V_i- \mu_2(x,t)V V_i,& x \in \Omega,~t>0,\\
\frac{\partial V}{\partial t}
= \kappa_2 \nabla \cdot( \delta_2(x,t) \nabla V ) + \beta(x,t) V- \mu_1(x,t) V- \mu_2(x,t) V^2,& x \in \Omega,~t>0,\\
\frac{\partial H_i}{\partial \bm{\nu}}
=\frac{\partial V_i}{\partial \bm{\nu}}
=\frac{\partial V}{\partial \bm{\nu}}=0,
& x \in \partial \Omega,~ t >0.
\end{cases}
\end{equation}
Linearizing the system \eqref{equ:sys:Hi_Vi_V} at its disease-free periodic solution  $(0,0,V^*_{\kappa_2}(x,t))$, we obtain the following periodic reaction-diffusion system of two infected variables:
\begin{equation}
	\begin{cases}
	\frac{\partial H_i}{\partial t}
	= \kappa_1 \nabla \cdot( \delta_1(x,t) \nabla H_i )   - \gamma(x,t) H_i + \sigma_1 (x,t) H_u(x) V_i, & x \in \Omega,~t>0,\\
	\frac{\partial V_i}{\partial t}
	= \kappa_2 \nabla \cdot( \delta_2(x,t) \nabla V_i ) + \sigma_2(x,t) V^{*}_{\kappa_2}(x,t) H_i \\
	\hspace{1.1cm}- \mu_1(x,t) V_i- \mu_2(x,t)V^{*}_{\kappa_2}(x,t) V_i,& x \in \Omega,~t>0,\\
	\frac{\partial H_i}{\partial \bm{\nu}}
	=\frac{\partial V_i}{\partial \bm{\nu}}
	=0,
	& x \in \partial \Omega,~ t >0.
	\end{cases}
\end{equation}

Now we choose $n=2$. It then follows that $X:=C(\oOmega,\Real^2)$, 
$$
\bX:=\{ \bmu \in C( \Real,X): \bmu(t)=\bmu(t+T), ~t \in \Real\},
$$
and
$$
\bP:=\{ \bmu \in C(\Real,\Real^2): \bmu(t)=\bmu(t+T), ~t \in \Real \}.
$$
Let $\{ \Phi_{\kappa_1,\kappa_2}(t,s): t \geq s \}$ be the evolution family on $X$ of 
\begin{equation}
\begin{cases}
\frac{\partial H_i}{\partial t}
= \kappa_1 \nabla \cdot( \delta_1(x,t) \nabla H_i )   - \gamma(x,t) H_i , & x \in \Omega,~t>0,\\
\frac{\partial V_i}{\partial t}
= \kappa_2 \nabla \cdot( \delta_2(x,t) \nabla V_i ) - \mu_1(x,t) V_i- \mu_2(x,t)V^{*}_{\kappa_2}(x,t) V_i,& x \in \Omega,~t>0,\\
\frac{\partial H_i}{\partial \bm{\nu}}
=\frac{\partial V_i}{\partial \bm{\nu}}
=0,
& x \in \partial \Omega,~ t >0.
\end{cases}
\end{equation}
Define two positive bounded linear operators $F_{\kappa_2}(t):X \rightarrow X$ and $\bQ_{\kappa_1,\kappa_2}:\bX \rightarrow \bX$ by 
$$
\left[
F_{\kappa_2}(t) 
\left( \begin{array}{cc}
\phi_1\\\phi_2
\end{array}\right)
\right](x)
:=\left( \begin{array}{cc}
\sigma_1 (x,t) H_u(x) \phi_2(x)\\
\sigma_2(x,t) V^{*}_{\kappa_2}(x,t) \phi_1(x)
\end{array}\right),~x \in \oOmega,~ t \in \Real,~ \phi=\left( \begin{array}{cc}
\phi_1\\\phi_2
\end{array}\right)\in X,
$$
and
$$
[\bQ_{\kappa_1,\kappa_2} \bmu](t): = \int_{0}^{+\infty} \Phi_{\kappa_1,\kappa_2}(t,t-s)F_{\kappa_2}(\cdot,t-s)\bmu(t-s) \D s, ~ t \in \Real,~ \bmu \in \bX.
$$
It then follows that  the basic reproduction ratio $\R (\kappa_1,\kappa_2):=r(\bQ_{\kappa_1,\kappa_2})$.  

Next we study the asymptotic behavior of $\R (\kappa_1,\kappa_2)$ as $\max(\kappa_1,\kappa_2) \rightarrow 0$ and 
$\min(\kappa_1,\kappa_2) \rightarrow +\infty$, respectively. 
By \cite[Theorem 3.1.2]{zhao2017dynamical}, it follows that for each $x \in \oOmega$, the following scalar periodic ODE equation
$$
\frac{\partial V}{\partial t}
= \beta(x,t) V- \mu_1(x,t) V- \mu_2(x,t) V^2,~t>0
$$
admits a globally stable positive $T$-periodic solution $V_0(x,t)$. Moreover, $V_0(x,t)$ is continuous on $\oOmega \times \Real$. Indeed, by arguments similar to those in the proof of Lemma \ref{lem:nonlinear:convergence:infty:tilde}, for any $x_0 \in \oOmega$, $V_0(x,t)$ converges uniformly to $V_0(x_0,t)$ on $[0,T]$ as $x$ goes to $x_0$.
Define
$$
\tilde{\beta}(t):=\vert \Omega \vert^{-1} \int_{\Omega} \beta (x,t) \D x,~
\tilde{\gamma}(t):=\vert \Omega \vert^{-1} \int_{\Omega} \gamma (x,t) \D x,~
\tilde{\sigma}_2(t):=\vert \Omega \vert^{-1} \int_{\Omega} \sigma_2 (x,t) \D x.
$$
$$
\tilde{\mu}_1(t):=\vert \Omega \vert^{-1} \int_{\Omega} \mu_1 (x,t) \D x, \text{ and }
\tilde{\mu}_2(t):=\vert \Omega \vert^{-1} \int_{\Omega} \mu_2 (x,t) \D x.
$$ 
Using  \cite[Theorem 3.1.2]{zhao2017dynamical} again, we see that the following scalar periodic ODE system
$$
\frac{\partial V}{\partial t}
= \tilde{\beta}(t) V- \tilde{\mu}_1(t) V- \tilde{\mu}_2(t) V^2,~t>0
$$
admits a globally stable positive $T$-periodic solution $\tilde{V}_{\infty}(t)$. 

It is easy to verify assumptions (H1)--(H5) hold true. Thus, Theorems \ref{thm:nonlinear:convergence:0} and \ref{thm:nonlinear:convergence:infty}
imply 
$$
\lim\limits_{\kappa_2 \rightarrow 0} \Vert V^*_{\kappa_2}(\cdot,t) - V_{0}(\cdot,t)  \Vert_{C(\oOmega)} =0,~
\lim\limits_{\kappa_2 \rightarrow +\infty} \Vert V^*_{\kappa_2}(\cdot,t) - \tilde{V}_{\infty}(t)  \Vert_{C(\oOmega)} =0.
$$
For each $x \in \oOmega$, let $\{\Gamma_{x,0}(t,s): t\geq s\}$ be the evolution family on $\Real^2$ of 
\begin{equation}
\begin{cases}
\frac{\partial H_i}{\partial t}
=   - \gamma(x,t) H_i , & x \in \oOmega,~t>0,\\
\frac{\partial V_i}{\partial t}
=  - \mu_1(x,t) V_i- \mu_2(x,t)V_{0}(x,t) V_i,& x \in \oOmega,~t>0,\\
\end{cases}
\end{equation}
and define
$$
\cF_{0}(x,t)\left( \begin{array}{cc}
\phi_1\\\phi_2
\end{array}\right)
:=
\left( 
\begin{array}{cc}
\sigma_1 (x,t) H_u(x) \phi_2\\
\sigma_2(x,t) V_{0}(x,t) \phi_1
\end{array}\right),~(x,t) \in \oOmega \times \Real,~ \phi=
\left( \begin{array}{cc}
\phi_1\\\phi_2
\end{array}\right) \in \Real^2.
$$
Let $\{\tilde{\Gamma}_{\infty}(t,s): t\geq s\}$ be the evolution family on $\Real^2$ of 
\begin{equation}
\begin{cases}
\frac{\partial H_i}{\partial t}
=   - \tilde{\gamma}(t) H_i , & x \in \oOmega,~t>0,\\
\frac{\partial V_i}{\partial t}
=  - [\tilde{\mu}_1 (t)+ \tilde{\mu}_2 (t) \tilde{V}_{\infty}(t) ]V_i,& x \in \oOmega,~t>0,\\
\end{cases}
\end{equation}
and define
$$
\tilde{\cF}_{\infty}(t)\left( \begin{array}{cc}
\phi_1\\\phi_2
\end{array}\right):=
\left( \begin{array}{cc}
\tilde{f}_{12}(t) \phi_2\\
\tilde{\sigma}_{2}(t) \tilde{V}_{\infty}(t) \phi_1
\end{array}\right),~ t \in \Real,~ \phi=\left( \begin{array}{cc}
\phi_1\\\phi_2
\end{array}\right) \in \Real^2,
$$
where
$$
\tilde{f}_{12}(t):=\vert \Omega \vert^{-1} \int_{\Omega} \sigma_1 (x,t) H_u(x) \D x,~t \in \Real.
$$
For each $x \in \oOmega$, we introduce a bounded linear positive operator $Q_{x,0} : \bP \rightarrow \bP$ by
$$
[Q_{x,0} \bmu] (t):=\int_{0}^{+\infty} \Gamma_{x,0}(t,t-s) \cF_{0}(x,t-s) \bmu(t-s) \D s, ~t \in \Real,~ \bmu \in \bP,
$$
and define a bounded linear positive operator $\tilde{Q}_{\infty} : \bP \rightarrow \bP$ by 
$$
[\tilde{Q}_{\infty} \bmu] (t):= \int_{0}^{+\infty} \tilde{\Gamma}_{\infty}(t,t-s)\tilde{\cF}_{\infty}(t-s)\bmu(t-s) \D s, ~ t \in \Real, ~\bmu \in \bP.
$$

Let us define $R_0(x,0):= r(Q_{x,0})$, $\forall x \in \oOmega$, and $\tilde{R}_{0}(\infty):=r(\tilde{Q}_{\infty})$. 
By Theorem \ref{thm:R0:infty:Neumann} with  $\bmkp={\rm diag}(\kappa_1,\kappa_2)$, $\chi=\kappa_2$, and $\chi_0=0$, and $\bmkp={\rm diag}(\kappa_1,\kappa_2)$, $\chi=\frac{1}{\kappa_2}$, and $\chi_0=0$, respectively,  it then follows that 
$$
\lim\limits_{\max(\kappa_1,\kappa_2) \rightarrow 0} \R(\kappa_1,\kappa_2)=\max_{ x \in \oOmega} R_0(x,0), \text{ and }
\lim\limits_{\min(\kappa_1,\kappa_2) \rightarrow +\infty} \R(\kappa_1,\kappa_2)= \tilde{R}_{0}(\infty).
$$

\noindent
{\bf Acknowledgements.}
L. Zhang's research is supported in part by the National Natural Science Foundation of China(11901138) and  the Natural Science Foundation of Shandong Province(ZR2019QA006), and X.-Q. Zhao’s research is supported in part by the NSERC of Canada.


\end{document}